\newcommand{\na}{\mathrm{I}}
\newcommand{\nb}{\mathrm{I\!I}}
\newcommand{\nn}{\ast}
\newcommand{\R}  {{\mathbb R}}
\newcommand{\N}  {{\mathbb N}}
\newcommand{\Z}  {{\mathbb Z}}
\newcommand{\bx}{{\mathbf x}}
\newcommand{\by}{{\mathbf y}}
\newcommand{\bg}{{\boldsymbol{\gamma}}}
\newcommand{\be}{{\boldsymbol{\eta}}}
\newcommand{\X}{{\mathfrak X}}
\newcommand{\T}{{\mathfrak T}}
\newcommand{\Var}  {\operatorname{Var}} 
\newcommand{\scp}[2]{\langle #1, #2 \rangle}
\newcommand{\bsgamma}{\boldsymbol{\gamma}}
\newcommand{\bst}{\boldsymbol{t}}
\newcommand{\A}{{\mathcal{A}}}
\newcommand{\EE}{{\rm E}}
\DeclareMathOperator{\decay}{decay}
\DeclareMathOperator{\ran}{ran}
\DeclareMathOperator{\nes}{nest}
\DeclareMathOperator{\unr}{unr}
\newcommand{\std}  {\operatorname{std}} 
\theoremstyle{plain}
\newtheorem{lemma}{Lemma}[section]
\newtheorem{theo}{Theorem}[section]
\newtheorem{cor}{Corollary}[section]
\theoremstyle{definition}
\newtheorem{rem}{Remark}[section]
\newtheorem{exmp}{Example}[section]
\begin{document}

\title[Embeddings of Weighted Hilbert Spaces]
{Embeddings of Weighted Hilbert Spaces and Applications to 
Multivariate and Infinite-Dimensional Integration}

\author[Gnewuch]
{Michael Gnewuch}
\address{
Mathematisches Seminar\\
Christian-Albrechts-Universit\"at zu Kiel\\
Ludewig-Meyn-Str.\ 4\\
24098 Kiel\\ 
Germany}
\email{gnewuch@math.uni-kiel.de}

\author[Hefter]
{Mario Hefter }
\address{Fachbereich Mathematik\\
Technische Universit\"at Kaisers\-lautern\\
Postfach 3049\\
67653 Kaiserslautern\\
Germany}
\email{hefter@mathematik.uni-kl.de}

\author[Hinrichs]
{Aicke Hinrichs}
\address{
Institut f\"ur Analysis\\
Johannes-Kepler-Universit\"at Linz\\
Altenberger Str.\ 69\\
4040 Linz\\
Austria}
\email{aicke.hinrichs@jku.at}

\author[Ritter]
{Klaus Ritter}
\address{Fachbereich Mathematik\\
Technische Universit\"at Kaisers\-lautern\\
Postfach 3049\\
67653 Kaiserslautern\\
Germany}
\email{ritter@mathematik.uni-kl.de}

\date{June 28, 2016}

\keywords{High-dimensional integration,
infinite-dimensional integration,
embedding theorems,
Sobolev spaces,
reproducing kernel Hilbert spaces,
tractability}

\begin{abstract}
We study embeddings and norm estimates for
tensor products of weighted reproducing kernel Hilbert spaces.
These results lead to a transfer principle that is directly 
applicable to tractability studies of 
multivariate problems as integration and approximation, 
and to their infinite-dimensional counterparts.
In an application we consider
weighted tensor product Sobolev spaces of mixed smoothness 
of any integer order, equipped with the
classical, the anchored, or the ANOVA norm. 
Here we derive new results for multivariate and infinite-dimensional 
integration.
\end{abstract}

\maketitle

\section{Introduction}

The application of suitable embedding theorems in complexity studies of 
high-dimensional or infinite-dimensional
numerical problems has recently found an increased interest 
\cite{HR13,HEF14,Hefter20161,Hinrichs2015,2015arXiv151105674K,%
2015arXiv150602458K}.
The basic idea is simple: If two norms on a vector space are equivalent 
up to a constant $c \geq 1$, estimates for errors of algorithms 
measured with respect to one norm can increase only by 
this factor $c$ if measured with respect to the other norm.

To use this approach in tractability studies for high-dimensional problems,
a rather general setting is a scale $(H_s)_{s\in\N}$ of vector spaces 
of real-valued functions on the domain $D^s$, where $D$ is a given 
non-empty set. On each of the spaces $H_s$ we have two equivalent norms, 
so we get two scales of normed spaces. To transfer tractability results 
from one scale to the other, the following question becomes central: 
\emph{When are the two sequences of norms uniformly equivalent,}
i.e., when do we have 
\[
\sup_{s \in \N} \max \left\{\|\imath_s\|, \|\imath_s^{-1}\|\right\}
< \infty,
\] 
where $\imath_s$ and $\imath_s^{-1}$ denote the corresponding embeddings?

Infinite-dimensional integration deals with
the limiting case $s=\infty$. For each of the scales of normed
spaces we obtain a normed space of real-valued functions
with infinitely many variables, and the following
question becomes central:
\emph{When do the two spaces coincide and have equivalent
norms?}

In the case of tensor products of weighted reproducing kernel 
Hilbert spaces, this problem (with $s \in \N$ and $s=\infty$)
was first studied in \cite{HR13,HEF14}. 
In the present paper
we use a substantial extension of this abstract approach.
It allows us to deal, as particular instances,
with Hilbert spaces of functions of higher  
smoothness $r\ge1$, 
while only smoothness $r=1$ can be treated
within the framework provided 
in \cite{HR13, HEF14}.
The new framework presented in this paper is not only more general, but 
also more lucid than the one presented in \cite{HR13, HEF14}.
The whole approach is mainly motivated by the 
flexibility it provides in proving error bounds in the most convenient 
norm, while getting the result also for other interesting norms.

The main goal of this article is to solve open problems of 
tractability analysis and infinite-dimensional integration. In addition, 
it should serve to unify and simplify proofs of existing 
results.

Our prime example is the multivariate integration problem 
by means of deterministic or of randomized algorithms
in weighted 
Sobolev spaces of mixed smoothness $r$. Here it is natural and convenient
to treat three different norms: the standard norm, the ANOVA norm, and
the anchored norm, see, e.g., \cite[Sec.~A.2]{NW08}.
Roughly speaking, the anchored norm is known to be very well suited
for the analysis of deterministic algorithms, while the ANOVA
norm is much preferable for the analysis of randomized algorithms.
For instance, concerning the anchored and the standard norm,
Hickernell and Yue \cite[p.~2568]{YH06} 
state that the corresponding spaces ``have slightly
different norms \dots although the smoothness assumptions
are the same''.
In fact, all three norms are different, and in general we do not have
uniform equivalence of any two of theses norms, see \cite[Exmp.\ 4,
Thm.\ 1]{HR13}.
We will consider, however, a relaxed notion of equivalence in
\eqref{g9} that still allows to transfer tractability results.
These findings will then be extended to the case $s=\infty$, which 
allows to transfer results for infinite-dimensional integration.
The transference principle is formulated 
for $s\in \N\cup \{\infty\}$ in Theorem \ref{t3}.

With the help of the transference principle we obtain new 
results for multivariate and infinite-dimensional integration,
see Sections \ref{RMI} and \ref{RIDI}.
In particular, we summarize the known and new results for 
infinite-dimensional
integration by means of deterministic and randomized algorithms 
in Tables \ref{my-label1} and \ref{my-label2} in 
Section  \ref{RIDI}. These tables give a rather complete answer 
to the fundamental question whether randomization helps for 
infinite-dimensional integration in spaces that have been studied
recently by many authors, see Remark \ref{Rem_Ran_Det}.
A further new result deals with the
multivariate decomposition method (MDM), a general type of
algorithm for infinite-dimensional integration, which
was originally designed 
and analyzed
for anchored reproducing kernel Hilbert spaces, see \cite{KSWW10a, PW11}. 
For this type of spaces, it was known that the MDM achieves the 
optimal convergence rate, and the latter is determined explicitly
by the decay of the weights and by the corresponding convergence
rate for the one-dimensional problem ($s=1$).
According to Theorem \ref{Theo_UB_PW} the MDM can be 
used for general reproducing kernel Hilbert spaces: Under mild
assumptions we obtain the same result as for the particular case
of anchored spaces.
This general result would be hard to prove without our new embedding 
approach, cf., e.g., \cite{DG13}.

Although we apply our embedding 
results in this paper only to the multivariate and the 
infinite-dimensional integration problem, it is clear that these 
results can also be used in tractability studies of other multivariate 
or infinite-dimensional problems as, e.g., approximation of functions.

Let us sketch our approach and outline the structure 
of the paper. In general, we consider reproducing kernel Hilbert spaces 
with kernels of weighted tensor product form. The weights are given by a 
sequence $\bg=(\gamma_j)_{j \in \N}$ of positive real numbers that satisfy
\[
\sum_{j=1}^\infty \gamma_j < \infty,
\]
which is a common assumption in tractability analysis.

The univariate starting point are two pairs 
$(\|\cdot\|_{1,\na},\|\cdot\|_{2,\na})$ and
$(\|\cdot\|_{1,\nb},\|\cdot\|_{2,\nb})$ of seminorms on a vector
space $H$ of real-valued functions on $D$, both satisfying the same set 
of assumptions, see Section \ref{subsec:assumptions}. 
These assumptions ensure that for every $j \in
\N$ and $\nn \in \{\na,\nb\}$ there exists a reproducing kernel
$k_{\gamma_j,\nn}$ on $D \times D$ such that 
the norm $\|\cdot \|_{1+k_{\gamma_j,\nn}}$ on the Hilbert space 
$H( 1+k_{\gamma_j,\nn})$ with reproducing kernel $1+k_{\gamma_j,\nn}$ 
satisfies
\[
\|f\|^2_{1+k_{\gamma_j,\nn}} = 
\|f\|^2_{1,\nn} + \frac{1}{\gamma_j} \|f\|^2_{2,\nn}.
\]
Furthermore, $H=H(1+k_{\gamma_j,\nn})$ as vector spaces,
so that we have equivalence of all the 
norms $\|\cdot\|_{1+k_{\gamma_j,\nn}}$.
To provide some intuition on the role of the weights, observe that
$\lim_{j \to \infty} 
\|f\|^2_{1+k_{\gamma_j,\nn}} = \infty$ unless $\|f\|_{2,\nn} =0$.
The latter turns out to be equivalent to $f$ being constant.
The basic embedding result and norm estimate for
functions of a single variable is derived in Section \ref{ss1}.

In Section \ref{s2.3} we then consider spaces of functions of 
finitely many variables. The reproducing kernels $K^{\bg,\nn}_s$ on 
$D^s \times D^s$ are the tensor products of the one-dimensional
kernels  $1+k_{\gamma_j,\nn}$ for $j=1,\dots,s$.
It follows that $H(K^{\bg,\nn}_s)$, as a vector space, does neither
depend on $\nn$ nor on $\gamma_1, \dots, \gamma_s$. This yields
the equivalence of all the norms $\|\cdot\|_{K^{\bg,\nn}_s}$ with
$\nn \in \{\na,\nb\}$ and $\bg$ as previously
on $H_s=H(K^{\bg,\nn}_s)$, which is a space
of real-valued functions on $D^s$.

In general, summability of the weights does not imply uniform equivalence 
of the norms on $H(K^{\bg,\na}_s)$ and $H(K^{\bg,\nb}_s)$, see
\cite{HR13}. As a remedy, 
we consider $c\bg = (c \gamma_j)_{j \in \N}$ with $c>0$, and we observe 
that the corresponding norms are monotonically decreasing functions of $c$.
Using $\imath^{{\be},\bg}_s$ to denote the embedding of 
$H(K_s^{{\be},\nb})$ into $H(K_s^{\bg,\na})$, we prove that there exists a
constant $0<c_0<1$, which only depends on the two pairs of seminorms, 
such that
\begin{equation}\label{g9}
\sup_{s \in \N} \max \left\{ \|\imath^{c_0\bg,\bg}_s\|, \|
(\imath^{c_0^{-1} \bg,\bg}_s)^{-1}\|,
\|\imath^{\bg,c_0^{-1}\bg}_s\|, \|
(\imath^{\bg,c_0\bg}_s)^{-1}\| \right\} < \infty
\end{equation}
for all sequences of summable weights, see Corollary \ref{c1}.

In Section \ref{s2.4} we proceed to spaces of functions of infinitely
many variables. The limit $K^{\bg,\nn}_\infty = 
\lim_{s \to \infty} K^{\bg,\nn}_s$
defines a reproducing kernel for a space of functions
of infinitely many variables. The domain is the sequence space $D^\N$, 
or, for technical reasons, a proper subset thereof, 
and it does not depend on $\nn$. 

In general, we do not have
$H(K_\infty^{{\bg},\na}) = H(K_\infty^{\bg,\nb})$, see
\cite{HR13}, but a similar approach as for $s \in \N$ is possible.
In fact, let $\imath^{{\be},\bg}_\infty$ denote the embedding of 
$H(K_\infty^{{\be},\nb})$ into $H(K_\infty^{\bg,\na})$, provided
that the corresponding domains do coincide and
$H(K_\infty^{{\be},\nb}) \subseteq H(K_\infty^{\bg,\na})$.
These domains turn out to be invariant with respect to 
multiplication of the weights by any constant,
and with $0<c_0<1$ as previously we obtain 
\[
\max \left\{ \|\imath^{c_0\bg,\bg}_\infty\|, \|
(\imath^{c_0^{-1} \bg,\bg}_\infty)^{-1}\|,
\|\imath^{\bg,c_0^{-1}\bg}_\infty\|, \|
(\imath^{\bg,c_0\bg}_\infty)^{-1}\| \right\} < \infty
\]
for all sequences of summable weights, see Corollary \ref{c2}.

Fortunately,
most of the known results on tractability 
and on infinite-dimensional integration
are invariant with respect to 
a multiplication of the weights with a constant,
and given this invariance the transfer of results
between the two scales of spaces does not require any further effort.

The basic difference between the approach in the present paper together 
with \cite{HR13,HEF14} and the approach in 
\cite{Hefter20161,Hinrichs2015,2015arXiv151105674K,
2015arXiv150602458K} to the analysis of embeddings and
equivalences of norms is as follows. The former papers
consider an abstract setting, which deals with reproducing
kernel Hilbert spaces in tensor product form and, accordingly,
with product weights. The latter approach deals
with specific spaces, namely, weighted Sobolev
spaces of mixed smoothness of order $r=1$, and with specific norms. 
However, the latter approach is not restricted to product weights,
and it allows to measure derivatives in any $L_p$-norm with 
$1 \leq p \leq \infty$.
The extremal cases $p \in \{1,\infty\}$ are analyzed
first, and then the results are extended to arbitrary $p$ by means
of interpolation theory, see \cite{Hinrichs2015}.

In Section \ref{s3} we start to apply the embedding results and
the norm estimates from Section \ref{SEC_EMB}. We formally
introduce the integration problem 
for $s\in \N$ and for $s = \infty$
and we discuss the notions of randomized and deterministic 
algorithms and the corresponding minimal errors. 
In Theorem  \ref{t3} the general error estimates for 
our transfer principle are stated. Finally, in Sections \ref{RMI} 
and \ref{RIDI}, specific known and new results for finite- and 
infinite-dimensional integration in both the deterministic and the
randomized setting are presented together with a discussion.

\section{Embedding Results and Norm Estimates}\label{SEC_EMB}

We present some abstract assumptions for Hilbert spaces of 
functions of a single variable, and embedding results and norm
estimates are first of all
derived in this setting. The tensor product structure of the 
function spaces allows to extend the results to the multivariate
case and to spaces of function with infinitely many variables.

\subsection{Assumptions}\label{subsec:assumptions}

We frequently use basic results from \cite{Aro50} about reproducing
kernels $K$ and the corresponding Hilbert spaces $H(K)$
without giving further reference.
We denote the space of constant functions (on a given domain) by $H(1)$; 
here $1$ denotes the constant kernel that only takes the function value 
one. Throughout the paper we do
not distinguish between a function in $H(1)$ and 
its constant function value.
Henceforth we assume that
\begin{enumerate}[label=(A\arabic*)]
\item\label{a1}
$H$ is a vector space of real-valued functions on a set $D \neq
\emptyset$ with $H(1) \subsetneq H$
\end{enumerate}
and
\begin{enumerate}[label=(A\arabic*)]
\setcounter{enumi}{1}
\item\label{a2}
$\|\cdot\|_1$ and $\|\cdot\|_2$ are seminorms on $H$,
induced by symmetric bilinear forms $\langle\cdot,\cdot\rangle_1$ and 
$\langle\cdot,\cdot\rangle_2$, such that
$\|1\|_1=1$ and $\|1\|_2=0$.
\end{enumerate}
Let 
\begin{equation}\label{eq3}
\|f\|_H = \left(\|f\|_1^2 + \|f\|_2^2\right)^{1/2}
\end{equation}
for $f \in H$. Henceforth we also assume that
\begin{enumerate}[label=(A\arabic*)]
\setcounter{enumi}{2}
\item\label{a3}
$\|\cdot\|_H$ is a norm on $H$ that turns this space into a
reproducing kernel Hilbert space, and there exists a constant $c
\geq 1$ such that
\begin{equation}\label{eq2}
\|f\|_H \leq c\left(\left|\langle f,1\rangle_1\right|+\|f\|_2\right)
\end{equation}
for all $f \in H$.
\end{enumerate}

Condition \eqref{eq2} is equivalent to the 
fact that $\|\cdot\|_H$ and
$\left|\langle\cdot,1\rangle_1\right|+\|\cdot\|_2$ 
are equivalent norms on $H$. 
Note also that 
$\|1\|_2=0$
is equivalent to $\|f+c\|_2=\|f\|_2$ for all $c\in \R$ and 
$f\in H$.
Actually, \eqref{eq2} implies that $\|f\|_2=0$ if and only 
if $f$ is constant.

\begin{rem}\label{r0}
A  setting  that is frequently studied in the literature on
tractability and infinite-dimensional integration
is that of a reproducing kernel Hilbert space  $H(1+k)$, where
\begin{enumerate}[label=(B\arabic*)]
\item\label{b1}
$k\neq 0$ is a reproducing kernel on $D\times D$ for some set 
$D\neq\emptyset$ such that $H(1)\cap H(k)=\{0\}$.
\end{enumerate}
A canonical pair of seminorms on the vector space $H=H(1+k)$ is derived by 
\[
\|f\|_{1} = |P(f)|
\]
and 
\[
\|f\|_{2} = \|f-P(f)\|_k,
\]
where $P$ denotes the orthogonal projection of $H(1+k)$ onto
$H(1)$. Observe that $\|\cdot\|_H = \|\cdot \|_{1+k}$.
Obviously, we have \ref{a1}, \ref{a2}, and \ref{a3}.
\end{rem}

\subsection{Functions of a Single Variable}\label{ss1}

We use the seminorms $\|\cdot\|_1$ and $\|\cdot\|_2$
to construct a family of reproducing kernels on $D\times D$.

\begin{lemma}\label{lem10}
For every $\gamma>0$ there exists a uniquely determined reproducing 
kernel $k_\gamma$ on $D\times D$ such that
\[
H(1+k_\gamma)=H
\]
and
\begin{align}\label{eqlem10}
\|f\|_{1+k_\gamma}^2
=\|f\|_1^2+\frac{1}{\gamma}\|f\|_2^2
\end{align}
for all $f\in H$. Moreover, the norms $\|\cdot\|_H$ and 
$\|\cdot\|_{1+k_\gamma}$ are equivalent,
and 
\[
H(1)\cap H(k_{\gamma})=\{0\}.
\]
\end{lemma}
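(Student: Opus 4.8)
The plan is to build the inner product that realizes the desired norm, and then read off $k_\gamma$ from an orthogonal splitting off of the constants. First I would introduce the symmetric bilinear form
\[
\langle f,g\rangle_\gamma := \langle f,g\rangle_1 + \tfrac{1}{\gamma}\langle f,g\rangle_2,
\qquad
\|f\|_\gamma^2 = \|f\|_1^2+\tfrac1\gamma\|f\|_2^2,
\]
and observe the elementary two-sided bound $\min\{1,\gamma^{-1}\}\,\|f\|_H^2 \le \|f\|_\gamma^2 \le \max\{1,\gamma^{-1}\}\,\|f\|_H^2$, which is immediate from \eqref{eq3}. Since $\|\cdot\|_H$ is a norm by \ref{a3}, this shows $\|\cdot\|_\gamma$ is an equivalent norm; in particular $\langle\cdot,\cdot\rangle_\gamma$ is an inner product and $(H,\langle\cdot,\cdot\rangle_\gamma)$ is complete. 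Point evaluations are continuous for $\|\cdot\|_H$ (as $H$ is a reproducing kernel Hilbert space by \ref{a3}), hence continuous for the equivalent norm $\|\cdot\|_\gamma$; therefore $(H,\langle\cdot,\cdot\rangle_\gamma)$ is again a reproducing kernel Hilbert space, whose kernel I denote by $K_\gamma$. Setting $k_\gamma := K_\gamma - 1$ already yields the equivalence of $\|\cdot\|_H$ and $\|\cdot\|_{1+k_\gamma}=\|\cdot\|_\gamma$, as well as the identity \eqref{eqlem10}.

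Next I would peel off the constants. Using \ref{a2} one computes $\|1\|_\gamma^2 = \|1\|_1^2 + \gamma^{-1}\|1\|_2^2 = 1$ and, for every $a\in\R$, $\langle a,1\rangle_\gamma = a$; hence the constant kernel $1$ is precisely the reproducing kernel of the one-dimensional subspace $H(1)\subset H$ with respect to $\langle\cdot,\cdot\rangle_\gamma$. As $H(1)$ is finite-dimensional it is closed, so $H = H(1)\oplus H(1)^{\perp}$ orthogonally in $\langle\cdot,\cdot\rangle_\gamma$. By the additivity of reproducing kernels over orthogonal direct sums \cite{Aro50}, the kernel of $H(1)^{\perp}$ equals $k_\gamma = K_\gamma - 1$, and $H(k_\gamma)=H(1)^{\perp}$ gives $H(1)\cap H(k_\gamma)=\{0\}$; moreover $H(1+k_\gamma)=H(K_\gamma)=H$. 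This is the required decomposition.

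Finally, for uniqueness I would invoke uniqueness of reproducing kernels: the identity \eqref{eqlem10} fixes the inner product on $H(1+k_\gamma)$ via polarization, so any kernel $\tilde k$ with $H(1+\tilde k)=H$ satisfying \eqref{eqlem10} makes $H(1+\tilde k)$ coincide with $(H,\langle\cdot,\cdot\rangle_\gamma)$ as a reproducing kernel Hilbert space, forcing $1+\tilde k=K_\gamma$ and thus $\tilde k=k_\gamma$. I expect the only genuinely delicate step to be the kernel splitting in the second paragraph: one must verify that the constant kernel $1$ really is the reproducing kernel of $H(1)$ in the new inner product — this is exactly where the normalization $\|1\|_1=1$, $\|1\|_2=0$ from \ref{a2} enters — and then apply the fact that the reproducing kernel of an orthogonal direct sum is the sum of the reproducing kernels. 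Everything else (the equivalence bounds and the uniqueness argument) is routine.
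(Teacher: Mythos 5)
Your proposal is correct and follows essentially the same route as the paper's proof: equip $H$ with the bilinear form $\langle\cdot,\cdot\rangle_1+\gamma^{-1}\langle\cdot,\cdot\rangle_2$, note the resulting norm is equivalent to $\|\cdot\|_H$ so one again has a reproducing kernel Hilbert space, and obtain $k_\gamma$ by splitting off the constants orthogonally (the paper identifies $H(1)^\perp=\{f\mid\langle f,1\rangle_1=0\}$ and takes $k_\gamma$ as its kernel, which is the same as your $K_\gamma-1$). Your explicit verification that $1$ reproduces $H(1)$ in the new inner product and your uniqueness argument via polarization are fine and only make explicit what the paper leaves implicit.
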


\begin{proof}
Fix $\gamma > 0$, and put
\[
\|f\| = 
\Bigl( \|f\|_1^2+\frac{1}{\gamma}\|f\|_2^2 \Bigr)^{1/2} 
\]
for $f \in H$.
Observe that $\|\cdot\|$ 
is a norm on $H$, which is induced by a symmetric bilinear form and
is equivalent to $\|\cdot\|_H$.

Let $H$ be equipped with the norm $\|\cdot\|$.
Since the norms $\|\cdot\|_H$ and $\|\cdot\|$ are equivalent
this is again a reproducing kernel Hilbert space.
For the orthogonal complement of $H(1)$ in this space we have
\[
H(1)^\perp = \{ f \in H \mid \scp{f}{1}_1 = 0\},
\]
and $\scp{f}{1}_1$ is the orthogonal projection of $f \in H$
onto $H(1)$. 
Furthermore, if $k_\gamma$ denotes the reproducing kernel of
$H(1)^\perp$, considered as a subspace of $H$,
then $1+k_\gamma$ is the reproducing kernel of $H$.
\end{proof}

In the following, $k_\gamma$ always denotes the reproducing kernel 
from Lemma~\ref{lem10}.
Typically, we do not refer to the explicit form of $k_\gamma$.

In 
Remark \ref{rem1}, see also Remark \ref{r3a}, we present an important
case, where 
there exists a reproducing kernel $k$ on $D \times D$ such that
\begin{equation}\label{g1}
k_\gamma=\gamma\cdot k 
\end{equation}
for all $\gamma>0$.
See, however, Remark~\ref{r1a} for another important
case, where we do not have this property -- not 
even for only two different values of $\gamma$.

\begin{rem}\label{rem1}
Suppose that the seminorm $\|\cdot\|_1$ is given in terms of a bounded 
linear functional $\xi$ on $H$, i.e.,
\begin{align*}
\|f\|_1
=\left|\xi(f)\right|
\end{align*}
for all $f\in H$. 
Let $k$ be the reproducing kernel on $D\times D$ such that
\begin{align*}
H(k)=\{f\in H\mid \xi(f)=0\}
\end{align*}
and
\begin{align*}
\|f\|_k
=\|f\|_{1+k_1}
= \|f\|_H
\end{align*}
for all $f\in H(k)$.
Clearly $H(1+k) =H$ and
$H(1)\cap H(k) =\{0\}$.
Furthermore, $\|f\|_2 = \|f-\xi(f)\|_k$ for all $f\in H$.
Consequently,
\begin{align*}
\|f\|_{1+\gamma\cdot k}^2
=\left|\xi(f)\right|^2+\frac{1}{\gamma}\|f-\xi(f)\|_{1+k_1}^2
=\|f\|^2_1+\frac{1}{\gamma}\|f\|_2^2
\end{align*}
for all $f\in H$ and $\gamma>0$,
which implies \eqref{g1}.
Moreover, by definition of $k$,
\[
\xi(k(\cdot,x)) = 0
\]
for every $x \in D$.

We add that the kernel $k$ satisfies \ref{b1} and the projection 
$P$ in Remark \ref{r0} is equal to the linear functional $\xi$.  
\end{rem}

\begin{rem}\label{r1a}
We show that the case considered in Remark \ref{rem1} is the only case
where $k_\gamma=\gamma\cdot k$ for at least two different values $\gamma$. 
Suppose that $\|\cdot\|_1$ is a seminorm induced by a symmetric
bilinear form that is not induced by a functional as in Remark \ref{rem1}.
Since the Cauchy-Schwarz inequality holds for such seminorms it follows
that the null space $ N=\{ f \in H \mid \|f\|_1=0 \}$ is a linear space
of codimension at least 2. Therefore there exists $f\in H$ with
$\langle f,1 \rangle_1 = 0$ and $\|f\|_1 \neq 0$.
Let $0 < \gamma_1 < \gamma_2$. We show that there 
exists no reproducing kernel $k$ on $D \times D$ such that 
$k_{\gamma_i} = \gamma_i \cdot k$ for $i=1,2$. Assuming the contrary
we obtain for $i \in \{1,2\}$ that 
\[
\|f\|_1^2 + \frac{1}{\gamma_i} \|f\|_2^2 =
\|f\|_{1+ k_{\gamma_i}}^2 =
\langle f,1 \rangle_1^2 + \| f - \langle f,1 \rangle_1\|_{k_{\gamma_i}}^2
= \frac{1}{\gamma_i} \|f\|_k^2.
\] 
Since $\|f\|_1 \neq 0$, this is a contradiction.
\end{rem}

Let $\|\cdot\|_{1,\na}$ and $\|\cdot\|_{2,\na}$ as well as 
$\|\cdot\|_{1,\nb}$ and $\|\cdot\|_{2,\nb}$ be two pairs 
of seminorms on $H$, both satisfying 
\ref{a2} and \ref{a3}. 
In the following we will compare the resulting norms according to 
\eqref{eqlem10}, and
we denote the corresponding reproducing kernels 
according to Lemma~\ref{lem10} by $k_{\gamma,\na}$ and 
$k_{\gamma,\nb}$, respectively.

\begin{theo}\label{lem1}
There exists a constant $0 < c_0 < 1$ with the following property
for every $\gamma > 0$.
For all $f\in H$,
\begin{align*}
\|f\|_{1+k_{\gamma,\na}}
\leq (1+\gamma)^{1/2} \cdot \|f\|_{1+k_{c_0 \gamma,\nb}}.
\end{align*}
\end{theo}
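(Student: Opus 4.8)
The plan is to pass to the squared inequality and use Lemma~\ref{lem10} to translate everything into a statement about the four seminorms, where the weight dependence becomes explicit. Abbreviating $a_i = \|f\|_{i,\na}$ and $b_i = \|f\|_{i,\nb}$ for $i\in\{1,2\}$, Lemma~\ref{lem10} turns the assertion into
\[
a_1^2 + \tfrac{1}{\gamma}\,a_2^2 \;\le\; (1+\gamma)\,b_1^2 + \tfrac{1+\gamma}{c_0\gamma}\,b_2^2
\]
for all $\gamma>0$ and all $f\in H$. Thus it suffices to compare the two pairs of seminorms by two pointwise estimates and then to balance the resulting $\gamma$-dependence.

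The two comparisons I would aim for are
\[
\|f\|_{2,\na}\le L\,\|f\|_{2,\nb}, \qquad \|f\|_{1,\na}\le \|f\|_{1,\nb} + L\,\|f\|_{2,\nb},
\]
with a constant $L\ge 1$ independent of $f$. The first input is that $\|\cdot\|_{H,\na}$ and $\|\cdot\|_{H,\nb}$ from \eqref{eq3} are two complete reproducing kernel Hilbert space norms on the \emph{same} vector space $H$; since point evaluations are continuous for both, the closed graph theorem forces them to be equivalent, say $\|\cdot\|_{H,\na}\le M\,\|\cdot\|_{H,\nb}$. The engine for both estimates is then the single decomposition $f = t + g$ with $t=\langle f,1\rangle_{1,\nb}$ (a constant) and $g=f-t$: here $\langle g,1\rangle_{1,\nb}=0$ and, because $\|1\|_{2,\nb}=0$, one has $\|g\|_{2,\nb}=\|f\|_{2,\nb}$. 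Combining \ref{a3} for $\nb$ (with constant $c_\nb$ from \eqref{eq2}) with the norm equivalence yields $\|g\|_{H,\na}\le M c_\nb\,\|f\|_{2,\nb}$. Using translation invariance of $\|\cdot\|_{2,\na}$ to get $\|f\|_{2,\na}=\|g\|_{2,\na}\le\|g\|_{H,\na}$, and $\|1\|_{1,\na}=1$ together with Cauchy--Schwarz ($|t|\le\|f\|_{1,\nb}$) to get $\|f\|_{1,\na}\le |t|+\|g\|_{1,\na}\le\|f\|_{1,\nb}+\|g\|_{H,\na}$, both displayed estimates follow with $L=M c_\nb$.

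Finally I would feed these into the reduced inequality. Squaring the second estimate and applying $2Lb_1b_2\le \gamma b_1^2 + \tfrac{L^2}{\gamma}b_2^2$ gives $a_1^2\le (1+\gamma)b_1^2 + L^2\tfrac{1+\gamma}{\gamma}b_2^2$; adding $\tfrac{1}{\gamma}a_2^2\le \tfrac{L^2}{\gamma}b_2^2$ produces a total $b_2^2$-coefficient $\tfrac{L^2(2+\gamma)}{\gamma}$, which must be at most the target coefficient $\tfrac{1+\gamma}{c_0\gamma}$. This holds for every $\gamma>0$ as soon as $c_0\le \inf_{\gamma>0}\tfrac{1+\gamma}{L^2(2+\gamma)}=\tfrac{1}{2L^2}$, and since $L=Mc_\nb\ge 1$ the choice $c_0=\tfrac{1}{2L^2}$ automatically lies in $(0,1)$, as required. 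I expect the genuine obstacle to be the structural input of the middle paragraph: obtaining seminorm comparisons whose constant does not involve the functional part of $\|\cdot\|_{1,\na}$. The decomposition $f=\langle f,1\rangle_{1,\nb}\cdot 1 + g$ is precisely what removes this, after which the Young-inequality balancing and the extraction of $c_0$ are routine.
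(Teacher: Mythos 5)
Your proposal is correct and follows essentially the same route as the paper's proof: the same decomposition $f=\langle f,1\rangle_{1,\nb}\cdot 1+g$, the same two seminorm comparisons (your $L$ playing the role of the paper's $c^2$ after its normalization to a single constant), and the same Young-inequality balancing leading to $c_0=1/(2L^2)$ versus the paper's $c_0=1/(2c^4)$.
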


\begin{proof}
Let $\|\cdot\|_{H,\na}$ and $\|\cdot\|_{H,\nb}$ denote
the norms on $H$ that are derived from the corresponding pairs
of seminorms via \eqref{eq3}. The two norms are equivalent,
which follows from the closed graph theorem and the assumption
that both of the norms turn $H$ into a reproducing kernel Hilbert
space.
Without loss of generality we assume the estimate
\eqref{eq2} to hold for both pairs of seminorms with a common 
constant $c\geq 1$,
and also 
\[
\|f\|_{H,\na} \leq c \|f\|_{H,\nb}
\]
to hold for all $f \in H$.

Let $f\in H$.
Then
\begin{align*}
\|f\|_{1,\na}
&\leq \|f-\langle f, 1\rangle_{1,\nb}\|_{1,\na}+
\|\langle f,1\rangle_{1,\nb}\|_{1,\na}\\
&\leq c\|f-\langle f, 1\rangle_{1,\nb}\|_{H,\nb} +
\left|\langle f,1\rangle_{1,\nb}\right|
\leq \|f\|_{1,\nb}+c^2\|f\|_{2,\nb}
\end{align*}
and
\begin{align*}
\|f\|_{2,\na} 
&= \|f-\langle f, 1\rangle_{1,\nb}\|_{2,\na} 
\leq c \|f-\langle f, 1\rangle_{1,\nb}\|_{H,\nb} 
\\
& \leq c^2 
\left( | \langle f - \langle f,1 \rangle_{1,\nb}, 1
\rangle_{1,\nb}| +
\|f - \langle f,1 \rangle_{1,\nb}\|_{2,\nb}
\right)
\leq c^2 \|f\|_{2,\nb}. 
\end{align*}
It follows that
\begin{align*}
\|f\|_{1,\na}^2+\frac{1}{\gamma}\|f\|_{2,\na}^2
&\leq \left(\|f\|_{1,\nb}+c^2\|f\|_{2,\nb}\right)^2+
\frac{c^4}{\gamma}\|f\|_{2,\nb}^2\\
&\leq (1+\gamma)\|f\|_{1,\nb}^2+\left(1+\frac{1}{\gamma}\right)
c^4\|f\|_{2,\nb}^2+\frac{c^4}{\gamma}\|f\|_{2,\nb}^2\\
&=(1+\gamma)\biggl(\|f\|_{1,\nb}^2+\frac{\left(1+\frac{1}{\gamma}
\right)c^4+\frac{c^4}{\gamma}}{1+\gamma}\|f\|_{2,\nb}^2\biggr)\\
&\leq(1+\gamma)\Bigl(\|f\|_{1,\nb}^2+
\frac{2c^4}{\gamma}\|f\|_{2,\nb}^2\Bigr).
\end{align*}
The constant $c_0 = 1/(2c^4)$ therefore has the property as claimed.
\end{proof}

As we impose the same set of assumptions on both pairs of seminorms, 
results like Theorem~\ref{lem1} are also valid in reverse order.

\begin{exmp}\label{exa1}
Fix $r \in \N$ and consider the Sobolev space
\begin{align*}
W^{r,2}[0,1]
=\{f\in L^2[0,1] \mid f^{(\nu)}\in L^2[0,1], 1\leq\nu\leq r\},
\end{align*}
where $f^{(\nu)}$ denotes the $\nu$th distributional derivative of
$f$. Moreover, consider three pairs of seminorms on this space,
given by
\begin{align*}
\|f\|_{1,\text{S}}^2
&=\int^1_0 \left| f(y) \right|^2  \,{\rm d}y,\\
\|f\|^2_{2,\text{S}}
&=\sum_{\nu=1}^r \int^1_0 |f^{(\nu)}(y)|^2 \,{\rm d}y\\
\intertext{and}
\|f\|_{1,\pitchfork}
&=|f(a)|,\\
\|f\|^2_{2,\pitchfork}
&=\sum_{\nu=1}^{r-1} |f^{(\nu)}(a)|^2 + \int^1_0 |f^{(r)}(y)|^2 
\,{\rm d} y \\
\intertext{with $a \in [0,1]$, as well as}
\|f\|_{1,\text{A}}
&=\left| \int^1_0 f(y) \,{\rm d}y \right|,\\
\|f\|^2_{2,\text{A}}
&=\sum_{\nu=1}^{r-1} \left|\int_0^1 f^{(\nu)}(y) \, {\rm d} y 
\right|^2 + \int^1_0 |f^{(r)}(y)|^2 \,{\rm d}y
\end{align*}
for $f\in W^{r,2}[0,1]$. 
See, e.g., \cite[Sec.~A.2]{NW08}.
The assumptions \ref{a1} and \ref{a2} are obviously
satisfied for $H=W^{r,2}[0,1]$ and each of these pairs of
seminorms.

Let
\[
\|f\|_{H,\nn} = 
\bigl(\|f\|_{1,\nn}^2 + \|f\|_{2,\nn}^2\bigr)^{1/2}
\]
for $\nn\in\{\text{S},\pitchfork,\text{A}\}$. 
It is well known that we get three equivalent norms in this way, 
each of which turns $H$ into a reproducing kernel Hilbert
space.
To establish \ref{a3} it remains to verify \eqref{eq2}.
The latter trivially holds with $c=1$ for
$\nn\in\{\pitchfork,\text{A}\}$,
since we have $|\langle f, 1 \rangle_{1,\nn}| = \|f\|_{1,\nn}$ in
these two cases. Finally, we get \eqref{eq2} in the case $\nn =
\text{S}$ from the trivial estimates
$\|f\|_{1,\text{S}} \leq \|f\|_{H,\text{S}}$
and
$\|f\|_{2,\text{A}} \leq \|f\|_{2,\text{S}}$
together with the equivalence of $\|\cdot\|_{H,\text{A}}$ and
$\|\cdot\|_{H,\text{S}}$.
In all three cases we denote the reproducing kernels according 
to Lemma~\ref{lem10} by $k_{\gamma,\nn}$. 

For $\nn \in \{\pitchfork,\text{A}\}$
we are in the situation of Remark~\ref{rem1}, so that
$k_{\gamma,\nn} = \gamma \cdot k_{1,\nn}$.
The norm on $H(1+k_{\gamma,\pitchfork})$ is given by
\[
\|f\|_{1+k_{\gamma,\pitchfork}}^2 =
|f(a)|^2 + \frac{1}{\gamma} \left(
\sum_{\nu=1}^{r-1} |f^{(\nu)}(a)|^2 + \int^1_0 |f^{(r)}(y)|^2 
\,{\rm d} y \right),
\] 
which corresponds to the anchored ($\pitchfork$)
decomposition of $f$, see, e.g., \cite{KSWW10b}.
The norm on $H(1+k_{\gamma,\text{A}})$ is given by
\[
\|f\|_{1+k_{\gamma,\text{A}}}^2 =
\left| \int^1_0 f(y) \,{\rm d}y \right|^2
+ \frac{1}{\gamma} \left(
\sum_{\nu=1}^{r-1} \left|\int_0^1 f^{(\nu)}(y) \, {\rm d} y 
\right|^2 + \int^1_0 |f^{(r)}(y)|^2 \,{\rm d}y
\right),
\] 
which corresponds to the ANOVA ($\text{A}$)
decomposition of $f$, see, e.g., \cite{DG13,KSWW10b}.

For $\nn = \text{S}$ the norm on $H(1+k_{\gamma,\text{S}})$ is
given by
\[
\|f\|_{1+k_{\gamma,\text{S}}}^2 =
\int^1_0 \left| f(y) \right|^2  \,{\rm d}y
+ \frac{1}{\gamma} \left(
\sum_{\nu=1}^r \int^1_0 |f^{(\nu)}(y)|^2 \,{\rm d}y
\right).
\]
In particular, for $\gamma=1$ we obtain
a standard ($\text{S}$) norm on the Sobolev space. 
The seminorm $\|\cdot\|_{1,\text{S}}$ is not induced by a bounded
linear functional. Hence we are not in the situation of Remark
\ref{rem1}, and
according to Remark \ref{r1a} there exists no reproducing
kernel $k$ such that $k_{\gamma,\text{S}}=\gamma\cdot k$ even for
only two different values of $\gamma$.
\end{exmp}

\begin{exmp}\label{exa2}
We discuss two natural modifications of the setting in
Example \ref{exa1} with $H=W^{r,2}[0,1]$ in the case $r \geq 2$.
At first, let $\|\cdot\|_{1,\nn}$ be given as previously,
but 
\begin{align*}
\|f\|^2_{2,\text{S}'}
=\|f\|^2_{2,\pitchfork'}
=\|f\|^2_{2,\text{A}'}
=\int^1_0 |f^{(r)}(x)|^2 \,{\rm d} x.
\end{align*}
For $\nn\in\{\text{S},\pitchfork,\text{A}\}$ the assumption \ref{a3}
is not satisfied for the seminorms 
$\|\cdot\|_{1,\nn}$ and $\|\cdot\|_{2,\nn'}$. 
In fact, for $\nn = \text{S}$ we do not have
\eqref{eq2}, and $\|\cdot\|_{1,\nn} + \|\cdot\|_{2,\nn'}$ does not
even define a norm on $H$ for $\nn\in\{\pitchfork,\text{A}\}$.

Now, we consider the seminorms 
$\|\cdot\|_{1,\nn'}$ and $\|\cdot\|_{2,\nn'}$, where
\begin{align*}
\|f\|_{1,\text{S}'}^2
&=\sum_{\nu=0}^{r-1} \int^1_0 |f^{(\nu)}(y)|^2 \,{\rm d}y,\\
\|f\|_{1,\pitchfork'}^2
&=\sum_{\nu=0}^{r-1} |f^{(\nu)}(a)|^2,\\
\|f\|_{1,\text{A}'}^2
&=\sum_{\nu=0}^{r-1} \left|\int_0^1 f^{(\nu)}(y) \, {\rm d} y 
\right|^2 .
\end{align*}
Clearly,
\[
\|f\|_{1,\nn'}^2 + \|f\|_{2,\nn'}^2 = 
\|f\|_{1,\nn}^2 + \|f\|_{2,\nn}^2 
\]
for $f \in H$, but we do not have \eqref{eq2}
for any $\nn\in\{\text{S},\pitchfork,\text{A}\}$.
Still, Lemma \ref{lem10} is valid.
As we will see in Remark \ref{Remark5}, Theorem \ref{lem1} is not valid 
for the pairs of seminorms
$(\| \cdot \|_{\nn', 1}, \| \cdot 
\|_{\nn', 2})$ and $(\| \cdot \|_{\nn, 1}, \| \cdot \|_{\nn, 2})$
for any $\nn \in \{\text{S},\pitchfork,\text{A}\}$ and also not 
for $(\| \cdot \|_{\text{S}, 1}, \| \cdot \|_{\text{S}', 2})$ and 
$(\| \cdot \|_{\text{S}, 1}, \| \cdot \|_{\text{S}, 2})$.
\end{exmp}

\begin{rem}\label{r3a}
In contrast to the setting studied so far, we now take a reproducing 
kernel Hilbert space $H$ with $1 \in H$ as the overall starting point. 
We use $\|\cdot\|$ to denote the norm on $H$.
Consider a bounded linear functional $\xi$ on $H$ such that
$\xi(1)=1$. Define two seminorms on $H$ by
\[
\|f\|_1
=\left|\xi(f)\right|
\]
and
\[
\|f\|_2
=\|f-\xi(f)\|
\]
for $f\in H$.
Obviously, the assumption \ref{a1} and \ref{a2} are
satisfied, and we have equivalence of the norms $\|\cdot\|$ and
$\|\cdot\|_H$, given by \eqref{eq3}. It follows that $H$, equipped
with $\|\cdot\|_H$, is a reproducing kernel Hilbert space, too.
Furthermore, 
\[
\|f\|_H^2= \langle f,1\rangle_1^2+\|f\|_2^2, 
\]
so that \eqref{eq2} is satisfied with $c=1$.
Altogether, this yields \ref{a3}, and we are in the situation of
Remark \ref{rem1}.
\end{rem}

\begin{rem}\label{r3}
The analysis in \cite{HR13} is based on the assumptions 
\ref{b1} in Remark~\ref{r0} and
\begin{enumerate}[label=(B\arabic*)]
\setcounter{enumi}{1}
\item\label{b2}
$\|\cdot\|$ is a seminorm on $H(1+k)$, induced by a
symmetric bilinear form $\scp{\cdot}{\cdot}$
that satisfies
$\|1\|=1$ 
as well as
$\|f\| \leq c \|f\|_k$ for every $f \in H(k)$ with some
constant $c>0$.
\end{enumerate}
We denote the seminorms from Remark \ref{r0} by 
$\|\cdot\|_{1,\na}$ and  $\|\cdot\|_{2,\na}$.
A second pair of seminorms on $H$ is defined by
\[
\|f\|_{1,\nb} = \|f\|
\]
and
\[
\|f\|_{2,\nb} = \|f\|_{2,\na}. 
\]
The latter pair of seminorms also satisfies 
\ref{a2},
and $\|\cdot\|_{H,\nb}$ is shown to be equivalent to
$\|\cdot\|_{H,\na}$ in \cite[Lem.~1]{HR13}. To establish \ref{a3}
     for $\|\cdot\|_{H,\nb}$
it therefore remains to verify \eqref{eq2}.
To this end we consider for the moment the seminorm 
$\left|\langle \cdot,1\rangle\right|$, which also satisfies \ref{b2}, 
instead of $\|\cdot\|$. The previously mentioned equivalence of the 
corresponding norms then yields \eqref{eq2}.

We conclude that the setting from \cite{HR13}, namely
\ref{b1} and \ref{b2} and the seminorms
$\|\cdot\|_{1,\nn}$ and $\|\cdot\|_{2,\nn}$ for $\nn \in \{\na,\nb\}$,
implies \ref{a1}--\ref{a3} for both pairs of these seminorms. 
Actually the setting from \cite{HR13} is stronger then the present
setting, since $\|f\|_{2,\nb}$ and $\|f\|_{2,\na}$ are assumed to
coincide in \cite{HR13}.
Observe that the latter does not hold  
in the situation of Example~\ref{exa1}, if $r\geq 2$. 
\end{rem}

\begin{exmp}\label{e33}
Spaces of smooth periodic functions are often defined in terms of the 
decrease of the Fourier coefficients. Assume that $D=
[0,1]$ and that 
a sequence $(\omega_h)_{h\in\Z}$ of positive numbers is given with 
$\omega_0=1$ and $\omega_h\to\infty$ as $|h|\to\infty$. Let $H$ be the 
Hilbert space of all 
$f\in L^2[0,1]$ with finite norm
\[ 
 \|f\|_H^2 = \sum_{h\in\Z} | \hat{f}(h)|^2 \, \omega_h,
\]
where
\begin{align*}
\hat{f}(h)
=\int_0^{1} f(t)e^{- 2\pi i h t}\,\mathrm dt. 
\end{align*}
We consider the pair of seminorms on $H$ given by
\[
\|f\|_{1} = | \hat{f}(0)|
\]
and
\[
\|f\|_{2}^2 = \sum_{h\neq 0} | \hat{f}(h)|^2 \, \omega_h.
\]
See, e.g., \cite[Sec.~A.1]{NW08}.
To obtain Hilbert spaces of $1$-periodic
real-valued functions and continuous 
function evaluations we need to assume that
\begin{align}\label{persum}
\sum_{h\in\Z}\frac{1}{\omega_h}<\infty.
\end{align}
This follows from the fact that the Cauchy-Schwarz inequality implies 
that in this case the Fourier series of $f\in H$ converges absolutely 
and uniformly to $f$. In this case the assumptions \ref{a1}, \ref{a2}, 
and \ref{a3} are easily verified. 
Specific examples are the periodic Sobolev spaces 
$H^r[0,1]$ for $r\in (0,\infty)$ that 
correspond to the choice $\omega_h=
\max\{1,|h|^{2r}\}$.
Here condition \eqref{persum} is equivalent to $r>1/2$.

We are in the situation of Remark~\ref{rem1}, so 
that $k_{\gamma} = \gamma \cdot k_{1}$.
The norm on $H(1+k_{\gamma})$ is given by
\[
\|f\|_{1+k_{\gamma}}^2 =
|\hat{f}(0)|^2 + \sum_{h\neq 0} | \hat{f}(h)|^2 \,\frac{\omega_h}{\gamma},
\] 
which is again of similar type with modified weights for the 
Fourier coefficients.
\end{exmp}

\subsection{Functions of Finitely Many Variables}\label{s2.3}

First, we consider a single family of reproducing kernels $k_{\gamma}$ 
that is derived from a pair
of seminorms that 
satisfies
\ref{a2} and \ref{a3}.
Furthermore, we consider a sequence
$\bg=\left(\gamma_j\right)_{j\in \N}$ of positive 
weights.

For $s\in\N$
we define the reproducing kernel $K^{\bsgamma}_s$ on 
$D^s\times D^s$ by
\begin{align}\label{eq100}
K^{\bsgamma}_s(\bx,\by) = \prod_{j= 1}^s
(1+k_{\gamma_j}(x_j,y_j)), 
\end{align}
where $\bx, \by\in D^{s}$.

The reproducing kernel Hilbert space $H(K^{\bsgamma}_s)$
is the (Hilbert space) tensor product of the spaces $H(1+k_{\gamma_j})$. 
Thus Lemma \ref{lem10} implies that $H(K^{\bsgamma}_s)$,
as a vector space, does depend on the dimension $s$
and the vector space $H$ only.
We henceforth denote this vector space by $H_s$.

Now, we consider two families of reproducing kernels 
$k_{\gamma,\na}$ and 
$k_{\gamma,\nb}$ that are derived from two pairs 
of seminorms, both of which satisfy
\ref{a2} and \ref{a3} 
with a common space $H$.
We denote the reproducing kernel according to \eqref{eq100} by 
$K^{\bsgamma,\na}_s$ and  $K^{\bsgamma,\nb}_s$, respectively.

From the consideration above
we get that $H(K^{\bsgamma,\nn}_s)$,
as a vector space,
does neither depend on $\bsgamma$ nor on $\nn \in \{\na,\nb\}$.
For sequences $\bg$ and $\be$ of positive weights that differ
only by a multiplicative constant
we compare the norms $\|\cdot\|_{K^{\bsgamma,\na}_s}$
and $\|\cdot\|_{K^{\be,\nb}_s}$ on this vector space.
We use $\imath^{{\be},\bg}_s$ to denote the embedding of 
$H(K_s^{{\be},\nb})$ into $H(K_s^{\bg,\na})$.

For $c > 0$ we put 
$c \bg=\left(c \gamma_j\right)_{j\in \N}$.

\begin{theo}\label{theo1}
Let $0<c_0<1$ denote the constant according to Theorem~\ref{lem1}. 
For all sequences $\bg$ of positive weights 
\[
\left\|\imath^{c_0\bg,\bg}_s\right\| 
\leq \prod_{j=1}^s\left(1+\gamma_j\right)^{1/2}
\]
holds for the norm of the embedding of 
$H(K_s^{{c_0 \bg},\nb})$ into $H(K_s^{\bg,\na})$. 
\end{theo}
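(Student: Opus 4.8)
The plan is to exploit the tensor product structure of the spaces and to reduce the multivariate estimate to the one-dimensional bound of Theorem~\ref{lem1}, using the multiplicativity of operator norms under Hilbert space tensor products.

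First I would recall that, by \eqref{eq100} and the discussion preceding the theorem, $H(K_s^{\bg,\na}) = \bigotimes_{j=1}^s H(1+k_{\gamma_j,\na})$ and $H(K_s^{c_0\bg,\nb}) = \bigotimes_{j=1}^s H(1+k_{c_0\gamma_j,\nb})$, and that all of these spaces coincide, as vector spaces, with the fixed space $H_s$. Consequently the embedding $\imath^{c_0\bg,\bg}_s$ is nothing but the identity map on $H_s$, read from $\|\cdot\|_{K_s^{c_0\bg,\nb}}$ to $\|\cdot\|_{K_s^{\bg,\na}}$. On the $j$-th factor, Lemma~\ref{lem10} tells us $H(1+k_{\gamma_j,\na}) = H = H(1+k_{c_0\gamma_j,\nb})$ as vector spaces, so the identity $\id \colon H \to H$ defines a bounded embedding $\imath_j$ from the norm $\|\cdot\|_{1+k_{c_0\gamma_j,\nb}}$ to $\|\cdot\|_{1+k_{\gamma_j,\na}}$; applying Theorem~\ref{lem1} with $\gamma = \gamma_j$ gives $\|\imath_j\| \leq (1+\gamma_j)^{1/2}$.

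The crucial identification is that $\imath^{c_0\bg,\bg}_s = \bigotimes_{j=1}^s \imath_j$ as an operator between the two tensor product Hilbert spaces. This holds because both maps act as $f_1 \otimes \dots \otimes f_s \mapsto f_1 \otimes \dots \otimes f_s$ on elementary tensors (each $\imath_j$ being the identity on vectors), and elementary tensors span a dense subspace; boundedness then extends the agreement to all of $H_s$. Granting this, I would invoke the standard fact that the operator norm of a tensor product of bounded Hilbert space operators is (sub)multiplicative, so that $\|\imath^{c_0\bg,\bg}_s\| = \|\bigotimes_{j=1}^s \imath_j\| \leq \prod_{j=1}^s \|\imath_j\| \leq \prod_{j=1}^s (1+\gamma_j)^{1/2}$, which is the claimed bound.

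The step demanding the most care is the passage from the factorwise estimates to the tensor product estimate. To keep the argument self-contained rather than appealing to a tensor-norm black box, I would prove it by induction on $s$: writing $H_s = H_{s-1} \otimes H$, the embedding factors as $\imath^{c_0\bg,\bg}_s = \imath^{c_0\bg,\bg}_{s-1} \otimes \imath_s$, and the two-factor submultiplicativity $\|A \otimes B\| \leq \|A\|\,\|B\|$ then advances the induction from $s-1$ to $s$. The one genuine subtlety underlying everything is the legitimacy of the tensor-product identification, namely that the two differently weighted tensor kernels generate the same underlying vector space $H_s$ on which the identity acts boundedly in both directions; this is precisely what Lemma~\ref{lem10} and the tensor product description of $H(K_s^{\bg,\nn})$ supply.
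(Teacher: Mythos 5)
Your proposal is correct and follows essentially the same route as the paper: the paper's proof is precisely the observation that $\|\imath^{c_0\bg,\bg}_s\| \leq \prod_{j=1}^s \|\imath^{c_0\gamma_j,\gamma_j}_1\|$ by the tensor product structure, combined with Theorem~\ref{lem1} applied to each factor. Your additional care about justifying the submultiplicativity (via density of elementary tensors or induction on $s$) is a reasonable elaboration of a step the paper leaves implicit.
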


\begin{proof}
Note that
\[
\|\imath^{c_0\bg,\bg}_s\| 
\leq \prod_{j=1}^s \left\|\imath^{c_0\gamma_j,\gamma_j}_1\right\|,
\]
and employ Theorem~\ref{lem1}. 
\end{proof}

Observe that $\left(\imath^{\be,\bg}_s\right)^{-1}$ is the
embedding of $H(K^{\bg,\na}_s)$ into $H(K^{\be,\nb}_s)$.
Theorem~\ref{theo1} and the symmetry in our assumption 
yields the following result.

\begin{cor}\label{c1}
If
\begin{equation}\label{eq5}
\sum_{j=1}^\infty \gamma_j < \infty,
\end{equation}
then
\begin{equation}\label{eq6}
\sup_{s \in \N} \max \left\{ \|\imath^{c_0\bg,\bg}_s\|, \|
(\imath^{c_0^{-1} \bg,\bg}_s)^{-1}\|,
\|\imath^{\bg,c_0^{-1}\bg}_s\|, \|
(\imath^{\bg,c_0\bg}_s)^{-1}\| \right\} < \infty.
\end{equation}
\end{cor}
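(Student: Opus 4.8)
The plan is to reduce each of the four embedding norms to one-dimensional estimates via the tensor product structure, exactly as in the proof of Theorem~\ref{theo1}, and then to use summability of the weights to control the resulting products uniformly in $s$. Since $H(K_s^{\bg,\na})$ and $H(K_s^{\be,\nb})$ are Hilbert space tensor products of the corresponding one-dimensional spaces $H(1+k_{\gamma_j,\nn})$, each of the four maps (an embedding $\imath_s$ or its inverse) factors as a tensor product of one-dimensional maps. Hence, as in Theorem~\ref{theo1}, its norm is bounded by the product over $j=1,\dots,s$ of the norms of the one-dimensional factors, and it suffices to bound each factor.

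I would obtain the four one-dimensional bounds from Theorem~\ref{lem1} and its reverse (the latter being valid because both pairs of seminorms satisfy the same assumptions), after a suitable substitution of the weight. The bound $\|\imath^{c_0\bg,\bg}_s\|\le\prod_{j=1}^s(1+\gamma_j)^{1/2}$ is Theorem~\ref{theo1} itself. Applying the reverse estimate $\|f\|_{1+k_{\gamma,\nb}}\le(1+\gamma)^{1/2}\|f\|_{1+k_{c_0\gamma,\na}}$ directly yields the factor $(1+\gamma_j)^{1/2}$ for $\|(\imath^{\bg,c_0\bg}_s)^{-1}\|$. For the two remaining quantities I would substitute $\gamma\mapsto c_0^{-1}\gamma$: from Theorem~\ref{lem1} this gives
\[
\|f\|_{1+k_{c_0^{-1}\gamma,\na}}\le(1+c_0^{-1}\gamma)^{1/2}\,\|f\|_{1+k_{\gamma,\nb}},
\]
producing the factor $(1+c_0^{-1}\gamma_j)^{1/2}$ for $\|\imath^{\bg,c_0^{-1}\bg}_s\|$, and from the reverse estimate the analogous bound produces the same factor $(1+c_0^{-1}\gamma_j)^{1/2}$ for $\|(\imath^{c_0^{-1}\bg,\bg}_s)^{-1}\|$. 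The only genuinely delicate point is the bookkeeping of which pair of seminorms sits in the numerator and which in the denominator, and which of the two weights carries the factor $c_0$; once this is set up correctly, the substitutions are forced.

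Collecting factors and using $c_0<1$, so that $1+\gamma_j\le 1+c_0^{-1}\gamma_j$, each of the four quantities is bounded by $\prod_{j=1}^s(1+c_0^{-1}\gamma_j)^{1/2}$. It then remains to observe that \eqref{eq5} gives $\sum_{j=1}^\infty c_0^{-1}\gamma_j=c_0^{-1}\sum_{j=1}^\infty\gamma_j<\infty$, so the infinite product $\prod_{j=1}^\infty(1+c_0^{-1}\gamma_j)$ converges; consequently $\sup_{s\in\N}\prod_{j=1}^s(1+c_0^{-1}\gamma_j)^{1/2}<\infty$, which gives \eqref{eq6}. Thus the substance of the argument lies in the four one-dimensional comparisons, while the passage to a uniform bound over $s$ is just the elementary equivalence of summability of $(\gamma_j)$ with convergence of $\prod_j(1+\gamma_j)$.
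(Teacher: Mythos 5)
Your proposal is correct and follows essentially the same route as the paper: the paper's proof is the one-line remark that Corollary~\ref{c1} follows from Theorem~\ref{theo1} together with the symmetry of the assumptions, and your argument simply spells out the details — the tensor-product factorization into one-dimensional embeddings, the reversed version of Theorem~\ref{lem1}, the substitution $\gamma\mapsto c_0^{-1}\gamma$, and the convergence of $\prod_j(1+c_0^{-1}\gamma_j)$ under \eqref{eq5}. Your bookkeeping of the four cases and the resulting factors $(1+\gamma_j)^{1/2}$ and $(1+c_0^{-1}\gamma_j)^{1/2}$ is accurate.
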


Under stronger assumptions, which are discussed in Remark \ref{r3}, 
a stronger conclusion is presented in \cite[Thm.~1]{HR13}.
We stress that the situation of Example~\ref{exa1} with $r \geq 2$
is not covered by \cite[Thm.~1]{HR13}, while
Theorem \ref{theo1} and Corollary \ref{c1} are of course applicable.

\begin{rem}
Consider the situation of Example~\ref{exa1} with $r=1$
as well as $\na = \text{A}$ and $\nb = \text{S}$. 
The following results follow from \cite[Exmp.~4, Exmp.~5, and 
Thm.~1]{HR13}.
We already get \eqref{eq6} from $\lim_{j \to \infty} \gamma_j = 0$,
so that \eqref{eq5} is not necessary for \eqref{eq6}
to hold. Actually \eqref{eq5} is equivalent to 
\begin{equation}\label{eq7}
\sup_{s \in \N} \max \left\{ \|\imath^{\bg,\bg}_s\|, \|
(\imath^{\bg,\bg}_s)^{-1}\|
\right\} < \infty
\end{equation}
in this case.
However, for $\na = \ \pitchfork$ and $\nb \in \{\text{A}, \text{S}\}$
we have equivalence of \eqref{eq7} and $\sum_{j=1}^\infty
\gamma_j^{1/2} < \infty$ and of \eqref{eq5} and \eqref{eq6}. 
In particular,
\eqref{eq5} does not imply \eqref{eq7} in general.
\end{rem}

\begin{rem}
\label{Remark5}
Let condition (\ref{eq5}) be satisfied.
Consider the situation of Examples~\ref{exa1} and \ref{exa2}
in the case $r\geq 2$. 

Firstly, let  $\nn=\text{S}$.
We already know that the pair of seminorms 
$\|\cdot\|_{1,\text{S}}$ and $\|\cdot\|_{2,\text{S}}$ satisfies
the assumptions 
\ref{a2} and \ref{a3}, 
while this is not true
for the pair of seminorms 
$\|\cdot\|_{1,\text{S}}$ and $\|\cdot\|_{2,\text{S}'}$.
We show that for these two pairs of seminorms an embedding result as in 
Corollary \ref{c1} no longer holds true.
More precisely, we demonstrate that there exists no constant $c_0 >0$ 
such that (\ref{eq6}) is satisfied.
Let 
\begin{align*}
f(\bx) =\prod_{j=1}^s f_j(x_j)
\end{align*}
with
\[
f_j(x_j) = \sqrt{3}\cdot x_j.
\]
For all sequences $\bsgamma$ of positive weights
we get 
\[
\prod_{j=1}^s \bigl(
\|f_j\|^2_{1,\text{S}} + \frac{1}{\gamma_j} \|f_j\|^2_{2,\text{S}'}
\bigr) =
1
\]
and
\[
\prod_{j=1}^s \Bigl(
\|f_j\|^2_{1,\text{S}} + \frac{1}{\gamma_j} \|f_j\|^2_{2,\text{S}}
\Bigr) =
\prod_{j=1}^s\Bigl(1+\frac{3}{\gamma_j}\Bigr),
\]
which diverges
for $s\to \infty$. 
Hence (\ref{eq6}) does not hold, regardless of how we choose $c_0>0$. 

Secondly, let $\nn\in\{\text{S},\pitchfork,\text{A}\}$. It is 
easily verified that the inequalities
\[
\prod^s_{j=1} \Bigl( \| f_j \|^2_{1, \nn'} + 
\frac{1}{\gamma_j}\| f_j \|^2_{2, \nn'} \Bigr) \le 6^s
\]
and 
\[
\prod^s_{j=1} \Bigl( \| f_j \|^2_{1, \nn} + 
\frac{1}{\gamma_j}\| f_j \|^2_{2, \nn} \Bigr) \ge 
\prod^s_{j=1} \frac{1}{\gamma_j}
\]
hold. This shows that for the pairs of seminorms 
$\|\cdot\|_{1,\nn'}$, $\|\cdot\|_{2,\nn'}$ and
$\|\cdot\|_{1,\nn}$, $\|\cdot\|_{2,\nn}$ there also exists no 
constant $c_0>0$ such that (\ref{eq6})
holds.
\end{rem}

\subsection{Functions of Infinitely Many Variables}\label{s2.4}

Again, we first consider a single family of reproducing kernels 
$k_{\gamma}$ that is 
derived from a pair
of seminorms that satisfies
\ref{a2} and \ref{a3}.
Furthermore, we consider a sequence
$\bg=\left(\gamma_j\right)_{j\in \N}$ of positive 
weights such that \eqref{eq5} is satisfied.

\begin{rem}\label{rem10}
Define the seminorms $\|\cdot\|_{1,\na}$ and $\|\cdot\|_{2,\na}$ 
on $H$ by
\begin{align*}
\|f\|_{1,\na} =|\langle f,1\rangle_{1}|
\end{align*}
and
\[
\|f\|_{2,\na} = \|f\|_{2}
\]
for all $f\in H$.
The assumptions \ref{a2} and \ref{a3} hold for the pair of
seminorms
$\|\cdot\|_{1,\na}$ and $\|\cdot\|_{2,\na}$, which fits
into the situation of Remark~\ref{rem1}.
We will employ this observation in several proofs, as it allows
to reduce the general setting to the particular setting of 
Remark~\ref{rem1}.
\end{rem}

The natural domain for the counterpart of \eqref{eq100} for
infinitely many variables is given by
\begin{align}\label{eq101}
\X^{\bg} = 
\Bigl\{{\bf x} \in D^{\mathbb{N}} \,\Big|\, \prod_{j=1}^\infty (1+
k_{\gamma_j}(x_j,  x_j) )<\infty \Bigr\}.
\end{align}
We present some basic properties of $\X^{\bg}$,
see also \cite{GMR12}.

\begin{lemma}\label{lem20}
Let $a,a_1,\dots,a_n\in D$. Then we have 
$(a_1,\dots,a_n,a,a,\dots)\in \X^{\bg}$,
and in particular $\X^{\bg} \neq\emptyset$.
Furthermore, we have $\X^{\bg} = \X^{c \bg}$
for all $c>0$.
\end{lemma}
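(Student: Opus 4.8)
The plan is to reduce membership in $\X^{\bg}$ to a summability condition and then to control the diagonal values of the kernels by a single scaling estimate. Since each $k_{\gamma_j}$ is a reproducing kernel, the numbers $k_{\gamma_j}(x_j,x_j)$ are nonnegative, and the elementary bounds $1+\sum_j t_j \le \prod_j(1+t_j)\le \exp(\sum_j t_j)$ for $t_j\ge 0$ show that $\bx\in\X^{\bg}$ if and only if $\sum_{j=1}^\infty k_{\gamma_j}(x_j,x_j)<\infty$. The key step is then the estimate
\[
k_\gamma(x,x) \le c^2\,(1+k_1(x,x))\,\gamma \qquad (x\in D,\ \gamma>0),
\]
where $c\ge 1$ is the constant from \eqref{eq2}. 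To prove it I would use that, by Lemma~\ref{lem10}, $k_\gamma$ is the reproducing kernel of $H(1)^\perp$ viewed as a subspace of $H$ equipped with $\|\cdot\|_{1+k_\gamma}$, so that $k_\gamma(x,x)=\sup\{|g(x)|^2/\|g\|_{1+k_\gamma}^2 : g\in H(1)^\perp,\ g\neq0\}$. Here $H(1)^\perp=\{g\in H : \langle g,1\rangle_1=0\}$ and, crucially, this set does not depend on $\gamma$ because $\langle\cdot,1\rangle_2=0$. For such $g$ one has $\|g\|_{1+k_\gamma}^2=\|g\|_1^2+\tfrac1\gamma\|g\|_2^2\ge \tfrac1\gamma\|g\|_2^2$, while \eqref{eq2} gives $\|g\|_H\le c(|\langle g,1\rangle_1|+\|g\|_2)=c\|g\|_2$, and the reproducing property in $(H,\|\cdot\|_H)=(H,\|\cdot\|_{1+k_1})$ yields $|g(x)|\le (1+k_1(x,x))^{1/2}\|g\|_H$. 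Combining these three inequalities bounds each quotient by $c^2(1+k_1(x,x))\gamma$, and taking the supremum gives the claim. (A nonzero $g\in H(1)^\perp$ has $\|g\|_2>0$, since $\|g\|_2=0$ forces $g$ constant and $\langle g,1\rangle_1=0$ then forces $g=0$; so dividing by $\|g\|_2^2$ is legitimate.)

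Given this estimate the first assertion is immediate. For $\bx=(a_1,\dots,a_n,a,a,\dots)$ I split
\[
\sum_{j=1}^\infty k_{\gamma_j}(x_j,x_j)
\le c^2\sum_{j=1}^n (1+k_1(a_j,a_j))\gamma_j + c^2(1+k_1(a,a))\sum_{j>n}\gamma_j ,
\]
and both terms are finite: the first is a finite sum, and the second is controlled by \eqref{eq5}. Hence $\bx\in\X^{\bg}$, and in particular $\X^{\bg}\neq\emptyset$.

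For the scaling invariance I would argue directly from the same supremum formula by comparing the two denominators. Writing $D_t(g)=\|g\|_1^2+\tfrac1t\|g\|_2^2$ for $g\in H(1)^\perp$, one checks for $c\ge 1$ that $\tfrac1c D_\gamma(g)\le D_{c\gamma}(g)\le D_\gamma(g)$ — the upper bound since $\tfrac{1}{c\gamma}\le\tfrac1\gamma$, the lower since $\|g\|_1^2\ge\tfrac1c\|g\|_1^2$. Taking suprema of $|g(x)|^2/D_\bullet(g)$ gives $k_\gamma(x,x)\le k_{c\gamma}(x,x)\le c\,k_\gamma(x,x)$, and the case $0<c<1$ follows by exchanging the roles of the two weights. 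Thus $k_{c\gamma}(x,x)$ and $k_\gamma(x,x)$ are comparable up to the factor $\max\{c,1/c\}$, uniformly in $x$ and $\gamma$, so $\sum_j k_{c\gamma_j}(x_j,x_j)<\infty$ if and only if $\sum_j k_{\gamma_j}(x_j,x_j)<\infty$; this is exactly $\X^{c\bg}=\X^{\bg}$.

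The main obstacle — and the reason the abstract setting is more delicate than the case of Remark~\ref{rem1}, where $k_\gamma=\gamma\cdot k$ outright — is that for a general seminorm $\|\cdot\|_1$ the kernel $k_\gamma$ is not proportional to $\gamma$, so one cannot read off either the $O(\gamma)$ decay or the scaling comparison directly from the formula for $k_\gamma$. The device that circumvents this is to work on the $\gamma$-independent subspace $H(1)^\perp$ and to exploit \eqref{eq2}, which on this subspace reduces $\|g\|_H$ to a multiple of $\|g\|_2$ alone; this is what converts the factor $\tfrac1\gamma$ in front of $\|g\|_2^2$ into the clean linear bound in $\gamma$.
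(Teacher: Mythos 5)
Your proof is correct, but it takes a different route from the paper's. The paper reduces the general case to the special situation of Remark~\ref{rem1}: it introduces the auxiliary pair of seminorms from Remark~\ref{rem10} (replacing $\|\cdot\|_1$ by $|\langle\cdot,1\rangle_1|$), for which $k_{\gamma,\na}=\gamma\cdot k_{1,\na}$ holds outright, and then invokes Theorem~\ref{lem1} in both directions to sandwich $1+k_\gamma(x,x)$ between constant multiples of $1+k_{c_0^{\pm1}\gamma,\na}(x,x)$; the conclusion follows because $\X^{\bg,\na}$ is manifestly scale-invariant. You instead work entirely within the single family $k_\gamma$: you use the variational formula $k_\gamma(x,x)=\sup_{0\neq g\in H(1)^\perp}|g(x)|^2/\|g\|_{1+k_\gamma}^2$, the $\gamma$-independence of $H(1)^\perp$ (which indeed follows from $\|1\|_2=0$ via Cauchy--Schwarz for the seminorm), and \eqref{eq2} restricted to $H(1)^\perp$, where it collapses to $\|g\|_H\le c\|g\|_2$, to get both the linear-in-$\gamma$ diagonal bound and the two-sided comparison $k_\gamma(x,x)\le k_{c\gamma}(x,x)\le c\,k_\gamma(x,x)$. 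Your argument is more self-contained for this particular lemma, since it does not route through Theorem~\ref{lem1} or the auxiliary seminorms, and it yields the slightly sharper explicit estimate $k_\gamma(x,x)\le c^2\gamma\,(1+k_1(x,x))$; the paper's reduction-to-Remark~\ref{rem1} device is less economical here but is a template that it reuses verbatim in later proofs (Lemma~\ref{integraluniformly}, Theorem~\ref{theo2}), which is presumably why it is preferred there. All the individual steps you use (the sup formula for the kernel diagonal, the product--sum equivalence for nonnegative terms, the positivity of $\|g\|_2$ on $H(1)^\perp\setminus\{0\}$) check out.
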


\begin{proof}
First assume that we are in the situation of Remark~\ref{rem1}. Then 
there exists a reproducing kernel $k$ on $D\times D$ such 
that $1+k_{\gamma} =1+\gamma\cdot k$ for all 
$\gamma>0$. This implies ${\X^{\bg}} = {\X^{c \bg}}$ 
and furthermore 
$(a_1,a_2, \ldots, a_n, a,a, \ldots) \in {\X^{\bg}} \neq\emptyset$
due to the summability condition (\ref{eq5}).

Now consider the general case. 
For $\gamma>0$ denote by $k_{\gamma,\na}$ the  
reproducing kernel on $D\times D$
that corresponds to the pair of seminorms
$\|\cdot\|_{1,\na}$ and $\|\cdot\|_{2,\na}$,
see Remark~\ref{rem10},
and denote by $\X^{\bg,\na}$ the 
corresponding
subset of $D^\N$.
From Theorem~\ref{lem1}
we get
\begin{align*}
\frac{1}{1+\gamma}
\left(1+k_{{c_0}\gamma,\na}(x,x)\right)
\leq 1+k_{\gamma}(x,x)
\leq (1+{c_0^{-1}}\gamma)
\left(1+k_{{c_0^{-1} \gamma,\na}}(x,x)\right)
\end{align*}
for all $\gamma>0$ and $x\in D$.
Therefore
\begin{align*}
\X^{c_0^{-1} \bg,\na} \subseteq \X^{\bg} \subseteq \X^{c_0 \bg,\na}.
\end{align*}
Since we already know that
$(a_1,a_2, \ldots, a_n, a,a, \ldots) \in \X^{ \bg,\na} = 
\X^{c\bg,\na}$ for all $c>0$, the statement follows.
\end{proof}

We define the reproducing kernel $K^{\bsgamma}_\infty$ on
$\X^{\bsgamma}\times \X^{\bsgamma}$ by
\begin{align}\label{eq102}
K^{\bsgamma}_\infty({\bf x},{\bf y}) = 
\prod_{j=1}^\infty (1+k_{\gamma_j}(x_j, y_j))
\end{align}
for $\bx,\by \in \X^{\bg}$.
For a function $f\colon D^s \to \R$ and a set 
$\emptyset\neq\X\subseteq D^\N$ we define 
$\psi_s^{\X}f\colon \X \to \R$ by 
\begin{align}\label{g8}
\psi_s^{\X}f(\bx)
=f(x_1,\dots,x_s)
\end{align}
for $\bx\in\X$.

The following lemma is a generalization of \cite[Lem.~9]{HR13},
and it follows directly from Lemma~\ref{lemapp} from the Appendix.

\begin{lemma}\label{dicht}
The mapping $\psi_s^{\X^{\bg}}$ is a linear isometry from
$H(K_s^{\bg})$ into $H(K^{\bg}_\infty)$, 
and $\bigcup_{s\in \N}\psi_s^{\X^{\bg}}
H(K_s^{\bg})$ 
is a dense subspace of $H(K^{\bg}_\infty)$.
\end{lemma}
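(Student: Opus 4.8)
The plan is to exploit the tensor product structure recorded above, namely that $H(K_s^{\bg})$ is the Hilbert tensor product $\bigotimes_{j=1}^s H(1+k_{\gamma_j})$, and to view $H(K^{\bg}_\infty)$ as the corresponding infinite tensor product taken along the distinguished unit vectors given by the constant function $1$ in each factor; observe that $\|1\|_{1+k_{\gamma_j}}^2=\|1\|_1^2+\gamma_j^{-1}\|1\|_2^2=1$ by \ref{a2}. Since the kernel sections $K_s^{\bg}(\cdot,\by)$, $\by\in D^s$, span a dense subspace of $H(K_s^{\bg})$ with $\scp{K_s^{\bg}(\cdot,\by)}{K_s^{\bg}(\cdot,\by')}_{K_s^{\bg}}=K_s^{\bg}(\by,\by')$, it suffices to understand $\psi_s^{\X^{\bg}}$ on these generators and extend by linearity and continuity, the extension matching the pointwise definition of $\psi_s^{\X^{\bg}}$ because point evaluations are continuous on every reproducing kernel Hilbert space.

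For the isometry, note that by definition $\psi_s^{\X^{\bg}}(K_s^{\bg}(\cdot,\by))(\bx)=\prod_{j=1}^s(1+k_{\gamma_j}(x_j,y_j))$ for $\bx\in\X^{\bg}$, which is exactly the elementary tensor $\bigotimes_{j=1}^s(1+k_{\gamma_j})(\cdot,y_j)\otimes\bigotimes_{j>s}1$, i.e.\ the $\by$-section in the first $s$ coordinates and the distinguished vector $1$ in all remaining ones. Evaluating the inner product in $H(K^{\bg}_\infty)$ as the product of the factorwise inner products, the tail collapses via $\prod_{j>s}\scp{1}{1}_{1+k_{\gamma_j}}=1$ and the head contributes $\prod_{j=1}^s(1+k_{\gamma_j}(y_j,y'_j))$, whence $\scp{\psi_s^{\X^{\bg}}K_s^{\bg}(\cdot,\by)}{\psi_s^{\X^{\bg}}K_s^{\bg}(\cdot,\by')}_{K^{\bg}_\infty}=K_s^{\bg}(\by,\by')$; extending linearly and by density then gives that $\psi_s^{\X^{\bg}}$ is a well-defined linear isometry into $H(K^{\bg}_\infty)$. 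The step that needs real care, and the main obstacle, is the rigorous identification of $H(K^{\bg}_\infty)$ with the infinite tensor product of the $H(1+k_{\gamma_j})$ along the unit vectors $1$, together with the convergence of the infinite product of factorwise inner products; this is precisely where the summability \eqref{eq5} and the restriction of the domain to $\X^{\bg}$ enter, and it is the content I would import from the general tensor product lemma (Lemma~\ref{lemapp}).

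Finally I would establish density, first recording that the images are nested: writing $\iota_s\colon H(K_s^{\bg})\to H(K_{s+1}^{\bg})$ for the map $f\mapsto f\otimes 1$, one has $\psi_{s+1}^{\X^{\bg}}\circ\iota_s=\psi_s^{\X^{\bg}}$, so $\psi_s^{\X^{\bg}}H(K_s^{\bg})\subseteq\psi_{s+1}^{\X^{\bg}}H(K_{s+1}^{\bg})$ and the union is a genuine subspace. Since $\psi_s^{\X^{\bg}}$ is an isometry on the complete space $H(K_s^{\bg})$, its image is a closed subspace; let $P_s$ denote the orthogonal projection onto it. As the sections $K^{\bg}_\infty(\cdot,\mathbf z)$, $\mathbf z\in\X^{\bg}$, span a dense subspace, it is enough to approximate each of them, and the key claim is $P_s K^{\bg}_\infty(\cdot,\mathbf z)=\psi_s^{\X^{\bg}}K_s^{\bg}(\cdot,(z_1,\dots,z_s))$. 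This follows from the isometry just proved together with the reproducing property: for every $\by\in D^s$ the reproducing property gives $\scp{K^{\bg}_\infty(\cdot,\mathbf z)}{\psi_s^{\X^{\bg}}K_s^{\bg}(\cdot,\by)}_{K^{\bg}_\infty}=\psi_s^{\X^{\bg}}K_s^{\bg}(\cdot,\by)(\mathbf z)=\prod_{j=1}^s(1+k_{\gamma_j}(z_j,y_j))$, while the isometry gives $\scp{\psi_s^{\X^{\bg}}K_s^{\bg}(\cdot,(z_1,\dots,z_s))}{\psi_s^{\X^{\bg}}K_s^{\bg}(\cdot,\by)}_{K^{\bg}_\infty}=K_s^{\bg}((z_1,\dots,z_s),\by)$, and the two agree. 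Hence the residual is orthogonal to $\psi_s^{\X^{\bg}}H(K_s^{\bg})$, and by Pythagoras $\|K^{\bg}_\infty(\cdot,\mathbf z)\|^2-\|P_s K^{\bg}_\infty(\cdot,\mathbf z)\|^2=\prod_{j=1}^\infty(1+k_{\gamma_j}(z_j,z_j))-\prod_{j=1}^s(1+k_{\gamma_j}(z_j,z_j))$, which tends to $0$ as $s\to\infty$ because $\mathbf z\in\X^{\bg}$ makes the infinite product convergent. Thus every generator lies in the closure of $\bigcup_{s\in\N}\psi_s^{\X^{\bg}}H(K_s^{\bg})$, proving density.
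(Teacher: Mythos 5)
Your proposal is correct and follows essentially the same route as the paper: there, Lemma \ref{dicht} is obtained by specializing the Appendix Lemma \ref{lemapp}, whose proof likewise establishes the isometry by showing that the tail constant $1$ is a unit vector in the tail space $H(K^s)$ (exactly the point you defer), and obtains density from the same norm identity $\|K^{\bg}_\infty(\cdot,\mathbf z)-\psi_s^{\X^{\bg}}K_s^{\bg}(\cdot,(z_1,\dots,z_s))\|^2=K^{\bg}_\infty(\mathbf z,\mathbf z)-K_s^{\bg}((z_1,\dots,z_s),(z_1,\dots,z_s))$ that you derive via orthogonal projections. The one thing to be aware of is that the step you ``import'' from Lemma \ref{lemapp} --- that $\psi_s^{\X^{\bg}}f$ actually belongs to $H(K^{\bg}_\infty)$ with the factorized inner product --- is the substantive content of the isometry claim rather than a side remark, so your argument is not independent of that lemma; since the paper itself deduces Lemma \ref{dicht} directly from Lemma \ref{lemapp}, this matches the intended structure.
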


Now we consider two families of reproducing kernels 
$k_{\gamma,\na}$ and 
$k_{\gamma,\nb}$ that are derived from two pairs 
of seminorms, both of which satisfy
\ref{a2} and \ref{a3} 
with a common space $H$.

In the sequel, we let $\nn \in \{\na,\nb\}$.
We denote the set
according to \eqref{eq101}
by $\X^{\bsgamma,\nn}$
and the reproducing kernel according to \eqref{eq102}
by $K^{\bsgamma,\nn}_\infty$.

If $H(K^{{\be},\nb}_\infty) \subseteq H(K^{\bg,\na}_\infty)$, then
we use $\imath^{{\be},\bg}_\infty$ to denote the respective embedding.

\begin{theo}\label{theo2}
For 
every $c >0$ we have
\[
\X^{{c \bg},\nb} = \X^{\bg,\na}.
\]
Furthermore, with
$0<c_0<1$ denoting the constant according to Theorem~\ref{lem1},
\begin{align*}
H(K^{{c_0 \bg},\nb}_\infty) \subseteq H(K^{\bg,\na}_\infty),
\end{align*}
and
\[
\left\|\imath^{c_0\bg,\bg}_\infty\right\| 
\leq \prod_{j=1}^\infty \left(1+\gamma_j\right)^{1/2}
\]
holds for the norm of the respective embedding.
\end{theo}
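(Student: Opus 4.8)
The statement has three parts---the equality of domains, the inclusion of spaces, and the norm bound---and the plan is to settle the domains first and then bootstrap the finite-dimensional estimate of Theorem~\ref{theo1} to the infinite-dimensional setting by a density argument.

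For the domain equality I would proceed exactly as in the proof of Lemma~\ref{lem20}, but comparing the two families $k_{\gamma,\na}$ and $k_{\gamma,\nb}$ directly. The key observation is that a norm inequality between two reproducing kernel Hilbert spaces passes to a pointwise inequality on the diagonals of the kernels: if $\|f\|_A \le C\|f\|_B$ for all $f$ in the common space, then using $A(x,x) = \sup_{f\neq 0}|f(x)|^2/\|f\|_A^2$ and the analogous formula for $B$ yields $B(x,x) \le C^2 A(x,x)$. Applying this to Theorem~\ref{lem1} (and to its version with $\na$ and $\nb$ interchanged) gives, for all $\gamma > 0$ and $x \in D$,
\[
\frac{1}{1+\gamma}\bigl(1+k_{c_0\gamma,\nb}(x,x)\bigr) \le 1+k_{\gamma,\na}(x,x) \le (1+c_0^{-1}\gamma)\bigl(1+k_{c_0^{-1}\gamma,\nb}(x,x)\bigr).
\]
Taking products over the weights $\gamma_j$ and using that $\prod_j(1+c\gamma_j) < \infty$ for every $c > 0$ by the summability assumption \eqref{eq5}, these estimates yield the chain $\X^{c_0^{-1}\bg,\nb} \subseteq \X^{\bg,\na} \subseteq \X^{c_0\bg,\nb}$. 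Since $\X^{c\bg,\nb} = \X^{\bg,\nb}$ for every $c > 0$ by Lemma~\ref{lem20}, both ends collapse to $\X^{\bg,\nb}$, whence $\X^{c\bg,\nb} = \X^{\bg,\na}$ for all $c > 0$.

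For the inclusion and the norm bound I would combine Theorem~\ref{theo1} with the isometric embeddings of Lemma~\ref{dicht}. Write $\X = \X^{c_0\bg,\nb} = \X^{\bg,\na}$, which is legitimate by the first part. For $f \in H(K_s^{c_0\bg,\nb}) = H(K_s^{\bg,\na}) = H_s$, the map $\psi_s^{\X}$ is an isometry into $H(K_\infty^{c_0\bg,\nb})$ and also into $H(K_\infty^{\bg,\na})$, so
\[
\|\psi_s^{\X}f\|_{K_\infty^{\bg,\na}} = \|f\|_{K_s^{\bg,\na}} \le \prod_{j=1}^s(1+\gamma_j)^{1/2}\|f\|_{K_s^{c_0\bg,\nb}} = \prod_{j=1}^s(1+\gamma_j)^{1/2}\|\psi_s^{\X}f\|_{K_\infty^{c_0\bg,\nb}},
\]
where the middle inequality is Theorem~\ref{theo1}. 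Bounding $\prod_{j=1}^s(1+\gamma_j)^{1/2}$ by $M := \prod_{j=1}^\infty(1+\gamma_j)^{1/2} < \infty$, this establishes $\|g\|_{K_\infty^{\bg,\na}} \le M\|g\|_{K_\infty^{c_0\bg,\nb}}$ on the dense subspace $V = \bigcup_{s}\psi_s^{\X}H(K_s^{c_0\bg,\nb})$ of $H(K_\infty^{c_0\bg,\nb})$.

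The remaining, and main, step is to extend this estimate from $V$ to all of $H(K_\infty^{c_0\bg,\nb})$ and to verify that the limit really is the original function. Given $g \in H(K_\infty^{c_0\bg,\nb})$, choose $g_n \in V$ with $g_n \to g$ in $H(K_\infty^{c_0\bg,\nb})$; the inequality on $V$ shows $(g_n)$ is also Cauchy in $H(K_\infty^{\bg,\na})$, so $g_n \to h$ there for some $h \in H(K_\infty^{\bg,\na})$. Since norm convergence in a reproducing kernel Hilbert space forces pointwise convergence on the common domain $\X$---the point evaluations being bounded by $K_\infty^{\cdot}(x,x)^{1/2}$, finite for $x \in \X$---both $g_n(x) \to g(x)$ and $g_n(x) \to h(x)$, so $g = h \in H(K_\infty^{\bg,\na})$. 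Passing to the limit then gives $\|g\|_{K_\infty^{\bg,\na}} \le M\|g\|_{K_\infty^{c_0\bg,\nb}}$, which is both the asserted inclusion and the bound $\|\imath^{c_0\bg,\bg}_\infty\| \le \prod_{j=1}^\infty(1+\gamma_j)^{1/2}$. The delicate point throughout is this identification of the two limits as the same function, which is exactly where the coincidence of domains from the first part is indispensable.
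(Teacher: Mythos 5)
Your proposal is correct and follows essentially the same route as the paper: pointwise kernel-diagonal estimates derived from Theorem~\ref{lem1} give the domain identity via Lemma~\ref{lem20}, and then Theorem~\ref{theo1} combined with the isometries and density statement of Lemma~\ref{dicht} yields the norm bound on a dense subspace, which extends to the whole space. The only difference is that you spell out two steps the paper leaves implicit (the passage from a norm inequality to an inequality of kernel diagonals, and the pointwise identification of limits in the density argument), both of which are carried out correctly.
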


\begin{proof}
As in the proof of Lemma \ref{lem20} we use Theorem \ref{lem1}
to derive
\[
1+k_{c_0 \gamma,\nb}(x,x)
\leq (1+\gamma)
\left(1+k_{\gamma,\na}(x,x)\right)
\]
for all $\gamma>0$ and $x \in D$.
We conclude that $\X^{\bg,\na} \subseteq \X^{c_0 \bg,\nb}$.
By the symmetry in our assumption and by Lemma~\ref{lem20} we get 
$\X^{{c \bg},\nb} = \X^{\bg,\na}$.

Put $\X = \X^{\bg,\na}$, $H_s = H(K^{\bg,\na}_s)$, and 
$H_0 = \bigcup_{s\in \N}\psi_s^{{\X}}(H_s)$.
Since $H_s = H(K_s^{c\bg,\nb})$,
Lemma~\ref{dicht} implies that $H_0$
is dense in $H(K^{c \bg,\nb}_\infty)$ and $H(K^{\bg,\na}_\infty)$. 
Furthermore, Theorem~\ref{theo1} and Lemma~\ref{dicht} imply
\begin{align*}
\|f\|_{K^{\bg,\na}_\infty}
\leq \prod_{j=1}^\infty\left(1+\gamma_j\right)^{1/2}
\cdot \|f\|_{K^{c_0 \bg,\nb}_\infty}
\end{align*}
for $f \in H_0$.
We conclude that $H(K^{{c_0 \bg},\nb}_\infty) \subseteq 
H(K^{\bg,\na}_\infty)$
with the norm of the embedding bounded as claimed.
\end{proof}

Theorem~\ref{theo2} and the symmetry in our assumption 
yield the following result.

\begin{cor}\label{c2}
We have
\[
\max \left\{ \|\imath^{c_0\bg,\bg}_\infty\|, \|
(\imath^{c_0^{-1}\bg,\bg}_\infty)^{-1} \|,
\|\imath^{\bg,c_0^{-1}\bg}_\infty\|, \|
(\imath^{\bg,c_0\bg}_\infty)^{-1}\| \right\} < \infty.
\]
\end{cor}

The comments and remarks that follow Corollary \ref{c1} carry over
to the present case of functions with infinitely many variables.
See, in particular, \cite[Thm.~2]{HR13}.

\section{The Integration Problem}\label{s3}

To begin with we derive some analytical properties
of the integration problem for functions with finitely many
and with infinitely many variables, see also \cite{GMR12}.
Then we present the framework for the analysis of the 
corresponding numerical integration problems and the basic 
application of the embedding results and norm estimates.

Again, we first consider a single family of reproducing kernels 
$k_{\gamma}$ that is derived from a pair
of seminorms satisfying
\ref{a2} and \ref{a3}.
Furthermore, we consider a sequence
$\bg=\left(\gamma_j\right)_{j\in \N}$ of positive 
weights such that \eqref{eq5} is satisfied.

Additionally we assume that $\rho$ is a probability measure 
(on a given $\sigma$-algebra) on $D$ such that
\begin{align*}
H \subseteq L^1(D,\rho).
\end{align*}
We let $\rho^s$ and $\rho^\N$ denote the corresponding product
measures on (the product $\sigma$-algebras in)
$D^s$ and $D^\N$, respectively. 
For $s \in \N$ we put $1:s = \{1,\dots,s\}$.

\subsection{Analytic Properties}

\begin{lemma}\label{integraluniformly}
We have
\begin{align*}
H(K_s^\bg)\subseteq L^1(D^s,\rho^{s})
\end{align*}
for all $s\in\N$.
The respective embeddings $J_s$ are continuous, and 
\begin{align*}
\sup_{s\in\N}\|J_s\|
<\infty.
\end{align*}
\end{lemma}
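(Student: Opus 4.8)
The plan is to combine the continuity of the one-dimensional integration embedding, the tensor (ANOVA) decomposition of $H(K_s^{\bg})$, and a stability estimate for tensor products of operators into $L^1$. First I would record a one-dimensional estimate. Since $H$ equipped with $\|\cdot\|_H$ is a reproducing kernel Hilbert space, convergence in $H$ implies pointwise convergence, and as $H\subseteq L^1(D,\rho)$ by assumption, the closed graph theorem shows that the embedding $J_1\colon H\to L^1(D,\rho)$ is continuous; write $M=\|J_1\|$. By Lemma~\ref{lem10} we have $H(k_\gamma)=\{g\in H\mid\scp{g}{1}_1=0\}$ with $\|g\|_{k_\gamma}=\|g\|_{1+k_\gamma}=(\|g\|_1^2+\tfrac1\gamma\|g\|_2^2)^{1/2}$ for such $g$. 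For $g\in H(k_\gamma)$ the estimate \eqref{eq2} from \ref{a3} gives $\|g\|_H\le c\|g\|_2$, because $\scp{g}{1}_1=0$, whence
\[
\int_D|g|\,\mathrm{d}\rho=\|J_1 g\|_{L^1}\le M\|g\|_H\le Mc\,\|g\|_2\le Mc\sqrt{\gamma}\,\|g\|_{k_\gamma}.
\]
Thus $T_\gamma:=J_1|_{H(k_\gamma)}\colon H(k_\gamma)\to L^1(D,\rho)$ satisfies $\|T_\gamma\|\le Mc\sqrt\gamma$; this is where the smallness in $\gamma$ enters.

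Next I would exploit the tensor structure. Since $H(1+k_{\gamma_j})=H(1)\oplus H(k_{\gamma_j})$ orthogonally, the product kernel expands as $K_s^{\bg}(\bx,\by)=\sum_{u\subseteq\{1,\dots,s\}}\prod_{j\in u}k_{\gamma_j}(x_j,y_j)$, giving the orthogonal decomposition $H(K_s^{\bg})=\bigoplus_{u}V_u$, where $V_u\cong\bigotimes_{j\in u}H(k_{\gamma_j})$ consists of functions depending only on the coordinates in $u$. Writing $f=\sum_u f_u$ with $\|f\|_{K_s^{\bg}}^2=\sum_u\|f_u\|^2$, and using that $\rho$ is a probability measure, I obtain $\int_{D^s}|f|\,\mathrm{d}\rho^s\le\sum_u\int_{D^{u}}|f_u|\,\mathrm{d}\rho^{u}$, so it remains to bound each $\int_{D^u}|f_u|\,\mathrm{d}\rho^u$ by the norm of the embedding $\bigotimes_{j\in u}H(k_{\gamma_j})\to L^1(D^{u},\rho^{u})$.

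The crucial ingredient, and the step I expect to be the main obstacle, is a tensor amplification estimate: for bounded operators $T_j\colon \mathcal K_j\to L^1(\mu_j)$ from Hilbert spaces one has $\|\bigotimes_{j}T_j\|\le C^{m}\prod_j\|T_j\|$ with a universal constant $C$. I would first prove the amplification $\|T\otimes\id_{\mathcal K}\colon\mathcal H\otimes\mathcal K\to L^1(\mu;\mathcal K)\|\le C\|T\|$ by Gaussian comparison: for $F=\sum_i h_i\otimes e_i$ with $(e_i)$ orthonormal, $\|(T\otimes\id)F\|=\int(\sum_i|(Th_i)|^2)^{1/2}\,\mathrm{d}\mu=\sqrt{\pi/2}\,\EE_g\|T(\sum_i g_i h_i)\|_{L^1}\le\sqrt{\pi/2}\,\|T\|\,\EE_g\|\sum_i g_i h_i\|_{\mathcal H}\le\sqrt{\pi/2}\,\|T\|\,\|F\|$, where $(g_i)$ are independent standard Gaussians and the last step is Jensen's inequality. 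Iterating via the factorization $T_1\otimes\cdots\otimes T_m=(\id\otimes(T_2\otimes\cdots\otimes T_m))\circ(T_1\otimes\id)$ yields the claim with $C=\sqrt{\pi/2}$. Applied to the operators $T_{\gamma_j}$ this gives $\int_{D^u}|f_u|\,\mathrm{d}\rho^u\le (CMc)^{|u|}\bigl(\prod_{j\in u}\sqrt{\gamma_j}\bigr)\|f_u\|$.

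Finally I would assemble the pieces by Cauchy--Schwarz over the subsets $u$:
\[
\int_{D^s}|f|\,\mathrm{d}\rho^s\le\Bigl(\sum_{u}(CMc)^{2|u|}\prod_{j\in u}\gamma_j\Bigr)^{1/2}\Bigl(\sum_u\|f_u\|^2\Bigr)^{1/2}=\prod_{j=1}^s\bigl(1+(CMc)^2\gamma_j\bigr)^{1/2}\,\|f\|_{K_s^{\bg}}.
\]
By \eqref{eq5} the product is bounded by $\exp\bigl(\tfrac12(CMc)^2\sum_{j=1}^\infty\gamma_j\bigr)<\infty$, uniformly in $s$, which yields $H(K_s^{\bg})\subseteq L^1(D^s,\rho^s)$, the continuity of $J_s$, and $\sup_s\|J_s\|<\infty$. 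The precise value of $C$ is irrelevant, since it only appears as $(CMc)^2\gamma_j$ inside a convergent product. Well-definedness of the embeddings into $L^1$, needed to justify the integrals, follows from the closed graph theorem exactly as in the one-dimensional step; since for fixed $s$ the sum over $u$ is finite, the displayed bound then holds for every $f\in H(K_s^{\bg})$.
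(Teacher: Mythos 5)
Your proof is correct, and its skeleton coincides with the paper's: decompose $H(K_s^{\bg})$ into the orthogonal sum over subsets $u\subseteq 1\colon\!s$, bound each component in $L^1$ via a Gaussian-comparison tensor amplification (your inline argument is exactly Lemma~\ref{l100} of the appendix, with the same constant $\sqrt{\pi/2}$ per factor), and finish with Cauchy--Schwarz over the subsets, using \eqref{eq5} to bound the resulting product uniformly in $s$. The one genuine difference is how the smallness in $\gamma_j$ is extracted. The paper first reduces, via Theorem~\ref{theo1} and Remark~\ref{rem10}, to the situation of Remark~\ref{rem1} where $k_\gamma=\gamma\cdot k$, so that the weight factors out of the norm identity \eqref{sumofnorms} and the one-dimensional embedding constant $d$ is weight-independent. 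You instead work with the general kernels $k_{\gamma_j}$ directly and obtain $\|J_1|_{H(k_{\gamma_j})}\|\le Mc\sqrt{\gamma_j}$ from \eqref{eq2} together with $\|g\|_2\le\sqrt{\gamma_j}\,\|g\|_{k_{\gamma_j}}$ on the orthogonal complement of the constants. This bypasses the reduction step entirely and makes the proof self-contained modulo the appendix lemma, at the cost of invoking \ref{a3} once more; both routes yield the same final bound $\prod_{j}(1+\mathrm{const}\cdot\gamma_j)^{1/2}$.
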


\begin{proof}
We use the same strategy as in the proof of Lemma~\ref{lem20}. 
Using Theorem~\ref{theo1} and Remark~\ref{rem10} 
we may, without loss of generality, assume that we are in the 
situation of Remark~\ref{rem1}, i.e.,
there exists a reproducing kernel $k$ on $D\times D$ such 
that $1+k_{\gamma} =1+\gamma\cdot k$ for all 
$\gamma>0$.

For $u\subseteq 1:s$ we define
the reproducing kernel $k_u$ on $D^s$ by
\begin{equation}\label{ku}
k_u({\bf x},{\bf y}) = 
\prod_{j\in u} k(x_j, y_j)
\end{equation}
for $\bx,\by \in D^s$ as well as
\begin{align*}
\gamma_u
=\prod_{j\in u} \gamma_j.
\end{align*}
Note that
\begin{align*}
K^\bg_s
=\sum_{u\subseteq 1:s} \gamma_u k_u.
\end{align*}

From Lemma~\ref{lem10} we get $H(1) \cap H(k) = \{0\}$.
As a well known consequence,
$H(K_s^\bg)$ is the direct sum of the spaces $H(\gamma_u k_u)$,
i.e., for every $f\in H(K_s^\bg)$
there exist uniquely determined 
$f_u\in H(k_u)$
such that
\begin{align}\label{eq103}
f
=\sum_{u\subseteq 1:s} f_u,
\end{align}
and in this case we get
\begin{equation}\label{sumofnorms}
\|f\|^2_{K^\bg_s}
=\sum_{u\subseteq 1:s} \|f_u\|^2_{\gamma_u k_u}
=\sum_{u\subseteq 1:s} \frac{1}{\gamma_u}\|f_u\|^2_{k_u}.
\end{equation}
See, e.g., \cite[Prop.\ 1 and Lem.\ 11]{GMR12}.

The closed graph theorem implies that $H(k)$ is continuously embedded
into $L^1(D,\rho)$.
Let $d$ denote the norm of this embedding, multiplied by 
$\sqrt{\pi/2}$.
Use Lemma~\ref{l100} from the Appendix to conclude that
$H(k_u)\subseteq L^1(D^s,\rho^{s})$ and
\begin{align*}
\int_{D^{s}} \left| f_u\right|\,\mathrm d\rho^{s}
\leq d^{\left|u\right|} \|f_u\|_{k_u}
\end{align*}
for all $f_u\in H(k_u)$ and all $u\subseteq 1:s$.
For 
$f\in H(K_s^\bg)$  
of the form \eqref{eq103} with $f_u\in H(k_u)$
this yields 
\begin{align*}
\int_{D^{s}} \left| f\right|\,\mathrm d\rho^{s}
& 
\leq \sum_{u\subseteq 1:s}\int_{D^{s}} 
\left| f_u\right|\,\mathrm d\rho^{s}
\leq \sum_{u\subseteq 1:s} d^{\left|u\right|} \|f_u\|_{k_u}\\
& 
\leq 
\Bigl(\sum_{u\subseteq 1:s} d^{2\left|u\right|}\gamma_u\Bigr)^{1/2}
\cdot \|f\|_{K_s^\bg}.
\end{align*}
Due to \eqref{eq5}, this shows the claim.
\end{proof}

Define the linear functional $I_s\colon H(K_s^\bg)\to \R$ by
\begin{align*}
I_s(f)
=\int_{D^s}f\,{\rm d}\rho^{s}
\end{align*}
for all $f\in H(K_s^\bg)$. 

\begin{rem}\label{r10}
Note that $\|I_s\| \geq 1$, since $I_s(1) = 1$ and $\|1\|_{K^\bg_s}=1$.
Furthermore, $\|I_s\|\leq \|J_s\|$,
and hence Lemma~\ref{integraluniformly} leads to
\begin{align}\label{eq104}
1 \leq \sup_{s\in\N}\|I_s\|
<\infty.
\end{align}
\end{rem}

Recall that $H_s$ denotes the vector space $H(K_s^\bg)$.

\begin{lemma}\label{lemintegral}
There exists a uniquely determined bounded linear functional
\begin{align*}
       I_\infty \colon H(K^\bg_\infty)\to\R
\end{align*}
such that
\begin{align*}
I_\infty (\psi_s^{\X^{\bg}}(f))
=I_s(f)
\end{align*}
for all $f\in H_s$ and $s\in\N$.
\end{lemma}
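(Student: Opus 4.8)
The plan is to build $I_\infty$ first on the dense subspace $H_0 = \bigcup_{s\in\N}\psi_s^{\X^{\bg}}(H_s)$ provided by Lemma~\ref{dicht}, and then to extend it to all of $H(K^{\bg}_\infty)$ by continuity. On $H_0$ the prescribed identity forces $I_\infty(\psi_s^{\X^{\bg}}(f)) = I_s(f)$ for $f \in H_s$, so the values of $I_\infty$ on $H_0$ are completely determined; since $H_0$ is dense, a bounded extension, if it exists, is automatically unique. Thus the entire argument reduces to showing that this prescription is (i) well defined on $H_0$ and (ii) bounded there.

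For well-definedness I would first record that the subspaces $V_s := \psi_s^{\X^{\bg}}(H_s)$ form an increasing chain. Given $f \in H_s$ and $s' \geq s$, let $\tilde f \in H_{s'}$ be the canonical extension $\tilde f = f \otimes 1 \otimes \cdots \otimes 1$, which is legitimate because $1 \in H$ and $H(K_{s'}^{\bg})$ is the tensor product of the factors $H(1+k_{\gamma_j})$. By the definition \eqref{g8} of $\psi$ we have $\psi_{s'}^{\X^{\bg}}(\tilde f) = \psi_s^{\X^{\bg}}(f)$, so indeed $V_s \subseteq V_{s'}$ and $H_0 = \bigcup_{s\in\N} V_s$ is an increasing union. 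Now if a single $g \in H_0$ is written both as $\psi_s^{\X^{\bg}}(f)$ and $\psi_{s'}^{\X^{\bg}}(f')$ with, say, $s \leq s'$, then $\psi_{s'}^{\X^{\bg}}(f') = \psi_{s'}^{\X^{\bg}}(\tilde f)$; since $\psi_{s'}^{\X^{\bg}}$ is an isometry, hence injective, by Lemma~\ref{dicht}, this forces $f' = \tilde f$. Therefore it suffices to verify $I_{s'}(\tilde f) = I_s(f)$, and this is exactly the Fubini computation $\int_{D^{s'}} \tilde f \, \mathrm d\rho^{s'} = \int_{D^{s}} f\, \mathrm d\rho^{s} \cdot \prod_{j=s+1}^{s'}\int_D 1 \,\mathrm d\rho = I_s(f)$, which uses that $\rho$ is a probability measure. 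Hence $I_\infty$ is well defined and linear on $H_0$.

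For boundedness, set $C = \sup_{s\in\N}\|I_s\| < \infty$ from \eqref{eq104}. For $g = \psi_s^{\X^{\bg}}(f) \in H_0$ the isometry property from Lemma~\ref{dicht} gives $\|f\|_{K_s^{\bg}} = \|g\|_{K^{\bg}_\infty}$, whence $|I_\infty(g)| = |I_s(f)| \leq \|I_s\|\,\|f\|_{K_s^{\bg}} \leq C\,\|g\|_{K^{\bg}_\infty}$. So $I_\infty$ is a bounded linear functional on the dense subspace $H_0$ with norm at most $C$, and it extends uniquely to a bounded linear functional on all of $H(K^{\bg}_\infty)$ satisfying the stated identity. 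The one place demanding care — and the main obstacle — is the well-definedness step: one must ensure that the equality of functions $\psi_{s'}^{\X^{\bg}}(\tilde f)=\psi_s^{\X^{\bg}}(f)$ on $\X^{\bg}$ and the scalar equality $I_{s'}(\tilde f)=I_s(f)$ are matched through the injectivity of the isometries, so that $I_\infty(g)$ truly does not depend on which representation of $g$ one picks.
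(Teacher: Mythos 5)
Your proposal is correct and follows exactly the route the paper intends: its proof of Lemma~\ref{lemintegral} is the single line ``This follows directly from Lemma~\ref{dicht} and \eqref{eq104}'', i.e.\ density of $\bigcup_{s}\psi_s^{\X^{\bg}}(H_s)$ plus the uniform bound $\sup_s\|I_s\|<\infty$, which is precisely the argument you spell out. Your explicit verification of well-definedness on the increasing union (via $\tilde f = f\otimes 1\otimes\cdots\otimes 1$, injectivity of the isometries, and Fubini with $\rho$ a probability measure) is the detail the paper leaves implicit, and it is carried out correctly.
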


\begin{proof}
This follows directly from Lemma~\ref{dicht} and \eqref{eq104}.
\end{proof}

If $\X^\bg$ is measurable, $\rho^\N(\X^\bg)=1$, and 
$H(K^\bg_\infty)\subseteq L^1(\X^\bg, \rho^\N)$ then 
Lemma~\ref{lemintegral} yields
\begin{align*}
I_\infty (f)
=\int_{\X^\bg} f\,\mathrm d\rho^\N
\end{align*}
for all $f\in H(K^\bg_\infty)$. 
For sufficient conditions under which these assumptions are fulfilled 
we refer to \cite{GMR12}.

Note that every function $f\colon \X^\bg\to \R$ with 
$f\in H(K^\bg_\infty)$ is measurable 
(with respect to the trace of the product $\sigma$-algebra
in $D^\N$).
This follows directly from Lemma~\ref{dicht}, 
Lemma~\ref{integraluniformly}, and the fact that the pointwise limit 
of measurable functions is measurable again.

\subsection{Algorithms and Minimal Errors}\label{s3.2}

Let 
\[
s \in \N \cup \{\infty\}.
\]
The integration problem on $H(K^\bg_s)$ consists in the 
approximation of $I_s$ by deterministic or randomized
algorithms. The corresponding domain of the integrands
is given by
\begin{equation*}
\X_s =\begin{cases}
\, D^s,
\hspace{2ex}&\text{if $s \in\N$},\\
\, \X^\bg,
\hspace{2ex} &\text{if $s = \infty$.}
\end{cases}
\end{equation*} 
We confine ourselves to deterministic and randomized linear algorithms 
of the form
\begin{equation}\label{Qn}
Q(f) = \sum^n_{i=1} w_i f(\bst^{(i)}).
\end{equation}
For a deterministic linear algorithm $Q$, the number $n \in \N$ of 
knots, and the knots $\bst^{(i)} \in \X_s$ as well as the coefficients
$w_i\in \R$ are fixed, regardless of $f$, 
i.e., $Q$ is a quadrature formula.
The corresponding class of algorithms is denoted by
$\A^{\det}_{s}$.
For a randomized linear algorithm $Q$, the number of knots $n\in\N$ is 
fixed as previously, but now the knots $\bst^{(i)}$ and coefficients $w_i$
are random variables with values in $\X_s$ and $\R$, respectively.
Any such algorithm is a mapping 
$Q\colon H(K^\bg_s) \times \Omega \to \R$ such that 
$Q(\cdot,\omega) \in \A^{\det}_{s}$ for every $\omega \in \Omega$,
where $(\Omega,\Sigma,P)$ denotes the underlying probability space.
The corresponding class of algorithms is denoted by $\A^{\ran}_{s}$.
We stress that we only consider non-adaptive algorithms, i.e., the 
random (or deterministic) choice of the coefficients and the knots is
independent of the specific integrand $f$. Actually, we will only need 
algorithms of this type to establish our upper error 
bounds. The lower bounds for the error that we will present 
usually hold for much more 
general classes of algorithms, see our comments below.

The error $e(Q,f)$ of approximating the integral $I_s(f)$ for 
$f\in H(K^\bg_s)$
by a randomized or deterministic linear algorithm $Q$ is defined as
\begin{displaymath}
e(Q, f) = \Bigl( \EE \bigl( \left( I_s(f) - Q(f) \right)^2 \bigr) 
\Bigr)^{1/2}. 
\end{displaymath}
Clearly $\A^{\det}_{s} \subsetneq \A^{\ran}_{s}$, and 
for deterministic algorithms $Q$ the error simplifies to 
\begin{displaymath}
e(Q,f) =  \left| I_s(f) - Q(f) \right|.
\end{displaymath}
The worst case error $e(Q,K^\bg_s)$ of approximating 
the integration functional $I_s$ on the unit ball
in $H(K^\bg_s)$ by $Q$ is defined as
\begin{displaymath}
e(Q,K^\bg_s) = \sup \left\{
e(Q,f) \mid f\in H(K^\bg_s),\ \|f\|_{K^\bg_s} \leq 1\right\}.
\end{displaymath}

Let
\begin{align}\label{modfunc}
{\rm cost}\colon \A^{\rm ran}_{s}\to [1,\infty]
\end{align}
be a given function that assigns to every randomized (or
deterministic) linear algorithm its cost.
The key quantity in the analysis
is the $n$th minimal error given by
\[
e^{\rm set}_{{\rm cost}}(n,K^\bg_s) = \inf\{e(Q,K^\bg_s) \,|\, 
Q \in \mathcal{A}^{\rm set}_{s}\text{ with }{\rm cost}(Q)\leq n\},
\]
where ${\rm set}\in\{ {\rm ran}, {\rm det}\}$.

\subsection{Application of the Norm Estimates}

Let $\|\cdot\|_{1,\na}$ and $\|\cdot\|_{2,\na}$ as well as 
$\|\cdot\|_{1,\nb}$ and $\|\cdot\|_{2,\nb}$ be two pairs 
of seminorms on $H$, both satisfying 
\ref{a2} and \ref{a3}.
The norm estimates from Sections \ref{s2.3} and \ref{s2.4}
are applied in the analysis of the integration problem as follows.
The linearity of $Q$ or $Q(\cdot,\omega)$ from \eqref{Qn}, 
respectively, ensures that
$e(Q,c f) = |c| \cdot e(Q,f)$ for every $c \in \R$.
From Corollaries~\ref{c1} and \ref{c2} we get the following
result.

\begin{theo}\label{t3}
Let ${\rm set}\in\{ {\rm ran}, {\rm det}\}$ and 
let $0<c_0<1$ denote the constant according to Theorem~\ref{lem1}. 
For every sequence $\bg$ of weights that satisfies \eqref{eq5}
there exists a constant $c \geq 1$ with the following property.
For every $s \in \N \cup \{\infty\}$ and every
$Q \in \A^{\rm set}_{s}$ we get
\[
c^{-1} \cdot e\left(Q, K^{c_0 \bg,\nb}_s\right) \leq
e\left(Q, K^{\bg,\na}_s\right) \leq 
c \cdot e\left(Q, K^{c_0^{-1} \bg,\nb}_s\right).
\]
In particular, for every cost function \eqref{modfunc},
every $s \in \N \cup \{\infty\}$, and every $n \in \N$
we get
\begin{align*}
c^{-1} \cdot e^{\rm set}_{{\rm cost}} \left( n,K^{c_0 \bg,\nb}_s \right)
\leq e^{\rm set}_{{\rm cost}} \left( n,K^{\bg,\na}_s \right)
\leq c \cdot e^{\rm set}_{{\rm cost}} 
\left( n,K^{c_0^{-1} \bg,\nb}_s \right).
\end{align*} 
\end{theo}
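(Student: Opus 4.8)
The plan is to derive the claimed error inequalities from the embedding results in Corollaries~\ref{c1} and \ref{c2}, exploiting the fact that the norm of the integration error $e(Q,K^\bg_s)$ is computed as a supremum over the unit ball of $H(K^\bg_s)$, and that rescaling the integrand by a constant rescales the error by the same constant. The key observation is that for two norms on the same underlying vector space $H_s$ (or the same space of infinite-variable functions), an embedding with norm bound $c$ translates directly into a two-sided comparison of worst-case errors.

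First I would record that by Corollaries~\ref{c1} and \ref{c2}, uniformly over $s \in \N \cup \{\infty\}$, the identity map from $H(K^{c_0\bg,\nb}_s)$ into $H(K^{\bg,\na}_s)$ (and the analogous maps) have norms bounded by a single constant, which I call $c \geq 1$; this is where the summability assumption \eqref{eq5} enters, since it is exactly what makes the products $\prod_j(1+\gamma_j)^{1/2}$ converge. Concretely, this means $\|f\|_{K^{\bg,\na}_s} \leq c\,\|f\|_{K^{c_0^{-1}\bg,\nb}_s}$ and $\|f\|_{K^{c_0\bg,\nb}_s} \leq c\,\|f\|_{K^{\bg,\na}_s}$ for all $f$ in the common vector space, with $c$ independent of $s$. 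For $s=\infty$ I would additionally invoke Theorem~\ref{theo2} to ensure the domains $\X^{\bsgamma,\nn}$ coincide so that the relevant embeddings are genuinely defined.

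Next I would translate these norm inequalities into error inequalities. The central step is: if a norm inequality $\|f\|_{K^{\bg,\na}_s} \leq c\,\|f\|_{K^{c_0^{-1}\bg,\nb}_s}$ holds, then for any fixed algorithm $Q$ the worst-case error over the two unit balls satisfies $e(Q,K^{\bg,\na}_s) \leq c\,e(Q,K^{c_0^{-1}\bg,\nb}_s)$. This follows because any $f$ in the unit ball of $H(K^{\bg,\na}_s)$, when rescaled by $c^{-1}$, lands inside (a scaled copy of) the unit ball of the other space; using $e(Q,c'f)=|c'|\,e(Q,f)$ (valid for both deterministic and randomized $Q$ by linearity, as noted before the theorem) and taking suprema gives the bound. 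The lower bound $c^{-1} e(Q,K^{c_0\bg,\nb}_s) \leq e(Q,K^{\bg,\na}_s)$ is obtained symmetrically from the reverse embedding. Combining the two yields the displayed two-sided chain for $e(Q,\cdot)$.

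Finally, the statement about minimal errors is a formal consequence: the cost function in \eqref{modfunc} depends only on the algorithm $Q$ and not on which norm we use to measure the integrands, so the constraint $\cost(Q) \leq n$ is unaffected by the change of norm. Taking the infimum of the already-established pointwise (in $Q$) error inequalities over all $Q \in \A^{\rm set}_s$ with $\cost(Q)\leq n$ preserves the inequalities and delivers the claim for $e^{\rm set}_{\rm cost}(n,\cdot)$. I expect the main obstacle to be purely bookkeeping rather than conceptual: one must be careful that the constant $c$ can be chosen uniformly over all $s \in \N \cup \{\infty\}$ simultaneously, which requires combining the finite-dimensional bound from Corollary~\ref{c1} with the infinite-dimensional bound from Corollary~\ref{c2}, and that the four embeddings appearing in those corollaries are the correct ones to produce both the upper and lower estimates with matching weight-rescalings $c_0\bg$ and $c_0^{-1}\bg$.
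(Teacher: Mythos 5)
Your overall route is exactly the paper's: the theorem is obtained from Corollaries~\ref{c1} and \ref{c2} together with the homogeneity $e(Q,c'f)=|c'|\,e(Q,f)$ and the observation that the cost of $Q$ does not depend on the norm, so the infimum over $\{Q:\cost(Q)\le n\}$ preserves the pointwise inequalities. However, your central step is stated with the direction of the transfer reversed, and the justification given for it is false. If $\|f\|_{A}\le c\,\|f\|_{B}$ for all $f$, then the $B$-unit ball is contained in $c$ times the $A$-unit ball (a \emph{smaller} norm has a \emph{larger} unit ball and hence a \emph{larger} worst-case error), so the correct conclusion is $e(Q,B)\le c\,e(Q,A)$ — not $e(Q,A)\le c\,e(Q,B)$ as you claim. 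Your justification, that $f$ in the $A$-unit ball rescaled by $c^{-1}$ lands in the $B$-unit ball, does not follow: the hypothesis $\|f\|_A\le c\|f\|_B$ gives only a \emph{lower} bound on $\|f\|_B$ in terms of $\|f\|_A$, so nothing controls $\|f/c\|_B$.

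Correspondingly, the two "concrete" norm inequalities you extract from Corollary~\ref{c1} are not the ones it provides. The corollary bounds $\|\imath^{c_0\bg,\bg}_s\|$ and $\|(\imath^{c_0^{-1}\bg,\bg}_s)^{-1}\|$, i.e.
\[
\|f\|_{K^{\bg,\na}_s}\le c\,\|f\|_{K^{c_0\bg,\nb}_s}
\qquad\text{and}\qquad
\|f\|_{K^{c_0^{-1}\bg,\nb}_s}\le c\,\|f\|_{K^{\bg,\na}_s},
\]
whereas you wrote the versions with $c_0$ and $c_0^{-1}$ interchanged; since multiplying the weights by $c_0^{-1}>1$ \emph{decreases} the norm, your versions are strictly stronger statements that are not available (and in general false uniformly in $s$). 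The two reversals cancel, which is why your final displayed chain of error inequalities happens to come out correct: applying the correct transfer rule to the two inequalities above yields $e(Q,K^{c_0\bg,\nb}_s)\le c\,e(Q,K^{\bg,\na}_s)$ and $e(Q,K^{\bg,\na}_s)\le c\,e(Q,K^{c_0^{-1}\bg,\nb}_s)$, exactly as claimed in the theorem. With the directions fixed, the rest of your argument (uniformity of $c$ over $s\in\N\cup\{\infty\}$ via the two corollaries, the domain identification for $s=\infty$ from Theorem~\ref{theo2}, and passing to minimal errors) is sound and coincides with the paper's proof.
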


Analogous estimates are valid, of course,
for other linear problems like approximation
(recovery) of functions.

Ultimately, we wish to transfer results
from the spaces $H(K^{\bg,\na}_s)$, say, to the spaces 
$H(K^{\bg,\nb}_s)$.
According to Theorem \ref{t3}, switching
from one pair of seminorms to the other one
involves a compensation by a suitable multiplication of the 
weights. However, for a single pair of seminorms
$\|\cdot\|_{1}$ and $\|\cdot\|_{2}$ we observe the following
strong impact of this multiplication, as far as embeddings
are concerned.
Let $c > 1$. 
For $s \in \N$ we have equivalence of the
norms on $H(K^{c \bg}_s)$ and $H(K^{\bg}_s)$, but
the norm of the embedding of $H(K^{c \bg}_s)$ into
$H(K^{\bg}_s)$ grows exponentially with $s$. For 
$s=\infty$ we do not even have 
$H(K^{c \bg}_s) \subseteq H(K^{\bg}_s)$.

The transfer of results is still possible
for integration problems that share the following feature:
Results and constructions depend on the underlying sequence $\bg$
of weights only via their decay, 
\begin{equation*}
\decay(\bg) = \sup \Bigl(
\Bigl\{ p > 0 \, \Big|\, 
\sum^\infty_{j=1} \gamma_j^{1/p} < \infty \Bigr\} \cup \{0\}
\Bigr).
\end{equation*}
Given this feature, it remains to observe that
$\decay(c \bg) = \decay(\bg)$ for every $c>0$.
Details and important examples will be presented in the following two 
sections.

\section{Results on Multivariate Integration}\label{RMI}

Consider the setting from Section \ref{s3} for 
\[
s \in \N.
\]
We employ the standard cost function
\[
\std \colon \A^{\rm ran}_{s}\to[1,\infty)
\]
for finite-dimensional integration that simply counts
the number of function evaluations, i.e.,
$\std(Q) = n$ for any algorithm of the form \eqref{Qn}.

In tractability analysis one studies the behavior of the $n$th
minimal error simultaneously in $n$ and $s$. A key concept is
strong polynomial tractability, i.e., the existence
of $c, \alpha > 0$ such that
\[
\forall\, n,s \in \N: 
e^{\rm set}_{\std}(n,K^\bg_s)
\leq c \cdot n^{-\alpha}.
\]
Obviously one is interested in the largest such $\alpha$, which leads 
to the definition
\begin{equation*}
\lambda^{\rm set}_{\std}= 
\lambda^{\rm set}_{\std}((K^\bg_s)_{s \in \N})= 
\sup \Bigl(\Bigl\{ \alpha > 0 \,\bigg|\, \sup_{s,n\in \N} 
e_{\std}^{\rm set}(n,K^\bg_s) \cdot  
n^\alpha < \infty \Bigr\}\cup \{0\}\Bigr).
\end{equation*}
We add that the normalized error is studied in many papers, 
i.e., $e_{\std}^{\rm set}(n,K^\bg_s)$ is replaced by 
$e_{\std}^{\rm set}(n,K^\bg_s)/ \|I_s\|$.
In our situation both concepts coincide,
as far as the strong polynomial tractability is concerned,
see Remark~\ref{r10}.
Moreover, we add that 
$1/\lambda^{\rm set}_{\std}$
is called the exponent of strong tractability. 
For more information about tractability
we refer to the monograph series
\cite{NW08, NW10, NW12}.

The generic application of our embedding results to
multivariate integration is the following, straightforward
consequence of Theorem \ref{t3}.

\begin{cor}\label{Cor_Trans_Princ}
Let ${\rm set} \in \{\det,\ran\}$ and suppose that 
$\lambda_{\std}^{\rm set}((K^{\bg, \na}_s)_{s \in \N})$ depends 
on $\bg$ only via $\decay(\bg)$, i.e., for all summable sequences
$\bg,\be$ of weights with $\decay(\bg) = \decay(\be)$
we have
\[
\lambda_{\std}^{\rm set}((K^{\bg, \na}_s)_{s \in \N}) =
\lambda_{\std}^{\rm set}((K^{\be,\na}_s)_{s \in \N}).
\]
Then we  have
\[
\lambda_{\std}^{\rm set}((K^{\bg,\na}_s)_{s \in \N}) =
\lambda_{\std}^{\rm set}((K^{\bg,\nb}_s)_{s \in \N}).
\] 
\end{cor}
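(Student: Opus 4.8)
The plan is to deduce this corollary directly from Theorem~\ref{t3} by exploiting the scale-invariance of $\decay$. First I would recall that Theorem~\ref{t3} provides, for each summable sequence $\bg$ of weights, a constant $c \geq 1$ (depending on $\bg$) such that for every $s \in \N$ and every $n \in \N$,
\[
c^{-1} \cdot e^{\rm set}_{\std}\left(n,K^{c_0\bg,\nb}_s\right) \leq
e^{\rm set}_{\std}\left(n,K^{\bg,\na}_s\right) \leq
c \cdot e^{\rm set}_{\std}\left(n,K^{c_0^{-1}\bg,\nb}_s\right).
\]
The crucial observation is that multiplying the $n$th minimal error by a constant $c$ that is independent of both $n$ and $s$ cannot change the supremal rate of polynomial decay. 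Hence the defining condition $\sup_{s,n} e^{\rm set}_{\std}(n,\cdot)\cdot n^\alpha < \infty$ holds for one of these three scales if and only if it holds for the others, which yields
\[
\lambda_{\std}^{\rm set}\bigl((K^{c_0\bg,\nb}_s)_s\bigr) =
\lambda_{\std}^{\rm set}\bigl((K^{\bg,\na}_s)_s\bigr) =
\lambda_{\std}^{\rm set}\bigl((K^{c_0^{-1}\bg,\nb}_s)_s\bigr).
\]

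Next I would invoke the scale-invariance $\decay(c\bg) = \decay(\bg)$ for every $c>0$, noted in the excerpt just before this corollary. Since $\decay(c_0\bg) = \decay(\bg)$ and $c_0\bg$ is again summable, the hypothesis that $\lambda_{\std}^{\rm set}((K^{\cdot,\nb}_s)_s)$ depends on the weights only through their decay (which is precisely what is assumed of the $\na$-scale, but by the symmetry of the assumptions the same reasoning transfers) gives
\[
\lambda_{\std}^{\rm set}\bigl((K^{c_0\bg,\nb}_s)_s\bigr) =
\lambda_{\std}^{\rm set}\bigl((K^{\bg,\nb}_s)_s\bigr).
\]
Chaining this equality with the one obtained from Theorem~\ref{t3} produces the desired identity
$\lambda_{\std}^{\rm set}((K^{\bg,\na}_s)_s) = \lambda_{\std}^{\rm set}((K^{\bg,\nb}_s)_s)$.

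The main subtlety, and the step I would be most careful about, is the logical placement of the decay hypothesis: the corollary states it for the $\na$-scale, yet the chain above needs it for the $\nb$-scale to collapse $\lambda$ at $c_0\bg$ back to $\lambda$ at $\bg$. I would resolve this by applying Theorem~\ref{t3} in both directions: the theorem compares the $\na$-scale at $\bg$ with the $\nb$-scale at $c_0\bg$ and at $c_0^{-1}\bg$, and by symmetry of the underlying assumptions it equally compares the $\nb$-scale at $\bg$ with the $\na$-scale at shifted weights. Since the decay hypothesis on the $\na$-scale lets me freely rescale the weights \emph{within} the $\na$-scale without changing $\lambda$, I can first transport $\lambda$ across the $\na$-scale to absorb the factor $c_0$, and only then cross over to the $\nb$-scale via the two-sided estimate of Theorem~\ref{t3}, thereby never needing the decay hypothesis for the $\nb$-scale at all. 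The only genuine work is bookkeeping the constants $c_0^{\pm1}$ so that every scale appearing in the chain is a positive multiple of $\bg$, hence has the same decay and the same $\lambda$.
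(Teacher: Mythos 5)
Your final argument is correct and is exactly the intended one: the paper offers no written proof beyond calling the corollary a straightforward consequence of Theorem~\ref{t3} together with $\decay(c\bg)=\decay(\bg)$, and the chain you describe in your last paragraph --- sandwich $\lambda_{\std}^{\rm set}((K^{\bg,\nb}_s)_s)$ between $\lambda_{\std}^{\rm set}((K^{c_0\bg,\na}_s)_s)$ and $\lambda_{\std}^{\rm set}((K^{c_0^{-1}\bg,\na}_s)_s)$ via the role-swapped form of Theorem~\ref{t3}, then collapse both ends to $\lambda_{\std}^{\rm set}((K^{\bg,\na}_s)_s)$ using the decay hypothesis, which is assumed only for the $\na$-scale --- is precisely how it goes. (Equivalently, one can keep the original orientation of Theorem~\ref{t3} and apply it with $\bg$ replaced by $c_0\bg$ and by $c_0^{-1}\bg$.) Two intermediate claims should be deleted rather than kept as scaffolding: the two-sided estimate of Theorem~\ref{t3} yields only the inequalities $\lambda_{\std}^{\rm set}((K^{c_0^{-1}\bg,\nb}_s)_s)\le\lambda_{\std}^{\rm set}((K^{\bg,\na}_s)_s)\le\lambda_{\std}^{\rm set}((K^{c_0\bg,\nb}_s)_s)$, not the three-way equality you first assert, because the three quantities involve three different weight sequences; and the ``symmetry of the assumptions'' concerns only \ref{a1}--\ref{a3}, so it does not let you transfer the decay hypothesis from the $\na$-scale to the $\nb$-scale --- your own final paragraph correctly avoids ever needing that.
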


A similar conclusion can obviously be drawn for upper and lower 
bounds on $\lambda_{\std}^{{\rm set}}$ that only depend on the 
decay of the weights.
 
In this section we apply the transfer principle from Corollary 
\ref{Cor_Trans_Princ} to $s$-fold weighted tensor 
products of the Sobolev space $W^{r,2}[0,1]$ of smoothness $r$ as 
spaces of integrands, see  Example \ref{exa1}, and to the uniform 
distribution $\rho$ on $[0,1]$.

For this kind of spaces quasi-Monte Carlo (QMC) theory was
known, so far, to provide very good deterministic algorithms 
in the anchored case
$H( K_s^{\bg, \pitchfork})$ and very good randomized algorithms 
in the ANOVA case $ H( K_s^{\bg, A})$,
as we will explain in more detail below. 
For more background on quasi-Monte Carlo integration we refer to the 
recent survey article \cite{DKS13}.

\subsection{Deterministic Setting}

For the anchored Sobolev spaces $H( K_s^{\bg, \pitchfork})$ of any
smoothness $r\in\N$ there are very good QMC algorithms known. In fact,
for $r=1$ there are efficient lattice rules or $(t,m,s)$-nets available, 
see, e.g., \cite{DKPS05, Kuo03, NC06a, Wan03} and the literature 
mentioned therein. For $r\geq 2$ one may use polynomial lattice rules of 
higher order, see, e.g., \cite{ BDGP11, BDLNP12, DG12, DP07} and the 
literature mentioned therein, or the recently analyzed interlaced 
polynomial lattice rules, see \cite{God15}.

We state the main result of this section, which was
partially known already. More precisely,
the upper bound for $\lambda_{\std}^{\rm det}$ follows directly
from the well-known lower bound for the minimal error
for univariate integration on $W^{r,2}[0,1]$ in the deterministic setting,
see \cite[Prop.~1, Sec.~1.3.12]{Nov88}.
In fact, this result holds for all deterministic algorithms.
Furthermore, for $r = 1$ the lower bound was known before for all cases 
$*\in\{\text{S}, \text{A}, \pitchfork\}$, 
see \cite[Cor.~6]{Kuo03} and \cite[Thm.~3]{SWW04}, 
while for $r\ge 2$ it was known before only
for $\nn= \ \pitchfork$, see \cite[Sec.~5.4]{DG12}.

\begin{theo}\label{CorDet}
Let $H=W^{r,2}[0,1]$ and $*\in\{ {\rm S}, {\rm A}, \pitchfork\}$. 
Then we have
\[
\min \left\{ \frac{\decay(\bg)}{2}, r \right\} \le 
\lambda_{\std}^{\rm det} ((K^{\bg,\nn}_s)_{s \in \N})      \le r.
\]
\end{theo}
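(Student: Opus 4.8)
The plan is to establish the two inequalities separately, exploiting the transfer principle from Corollary~\ref{Cor_Trans_Princ} to reduce everything to a single convenient norm. The key observation is that the quantity $\lambda_{\std}^{\rm det}$ depends on $\bg$ only through $\decay(\bg)$ for each of the three norms $\nn \in \{\text{S}, \text{A}, \pitchfork\}$; hence it suffices to prove the bounds for one choice of $\nn$ (most naturally the anchored case $\pitchfork$, where sharp QMC results are available) and then transfer to the other two. Concretely, I would first argue that the map $\bg \mapsto \lambda_{\std}^{\rm det}((K^{\bg,\pitchfork}_s)_{s\in\N})$ is invariant under scaling of the weights and depends only on $\decay(\bg)$, which lets Corollary~\ref{Cor_Trans_Princ} equate the three values $\lambda_{\std}^{\rm det}((K^{\bg,\nn}_s)_{s\in\N})$ for $\nn \in \{\text{S}, \text{A}, \pitchfork\}$.

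For the upper bound $\lambda_{\std}^{\rm det} \leq r$, I would invoke the well-known lower bound on the minimal error for univariate integration on $W^{r,2}[0,1]$ in the deterministic setting, namely $e^{\det}_{\std}(n, K^{\bg,\nn}_1) \gtrsim n^{-r}$, which holds for \emph{all} deterministic algorithms and is cited as \cite[Prop.~1, Sec.~1.3.12]{Nov88}. Since the one-dimensional problem is embedded in the $s$-dimensional one (by fixing the remaining coordinates at the anchor, or more carefully by restricting to functions depending on a single variable), the $s$-variate minimal error cannot decay faster than $n^{-r}$, forcing $\lambda_{\std}^{\rm det} \leq r$ uniformly in $s$. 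The delicate point here is to make the reduction from the $s$-variate to the univariate lower bound rigorous — one must check that the restriction to a coordinate subspace of $H(K^{\bg,\nn}_s)$ is an isometry (up to the weight factor) onto the univariate space, so that the univariate lower bound genuinely obstructs fast convergence in higher dimensions.

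For the lower bound $\min\{\decay(\bg)/2, r\} \leq \lambda_{\std}^{\rm det}$, I would produce good deterministic QMC algorithms in the anchored space $H(K^{\bg,\pitchfork}_s)$. For $r=1$ one uses efficient lattice rules or $(t,m,s)$-nets as in \cite{DKPS05, Kuo03, NC06a, Wan03}, and for $r \geq 2$ one uses higher-order polynomial lattice rules or interlaced polynomial lattice rules \cite{BDGP11, BDLNP12, DG12, DP07, God15}; these constructions achieve worst-case error of order $n^{-\min\{\decay(\bg)/2, r\}+\eps}$ with constants uniform in $s$ under the summability assumption \eqref{eq5}. This yields the claimed lower bound on $\lambda_{\std}^{\rm det}$ for the anchored norm, and since the achieved rate depends on $\bg$ only through $\decay(\bg)$, Corollary~\ref{Cor_Trans_Princ} transfers it to the standard and ANOVA norms. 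The main obstacle is the transfer step: one must verify that the rate $\min\{\decay(\bg)/2, r\}$, together with the hypothesis that $\lambda_{\std}^{\rm det}$ depends on $\bg$ solely via $\decay(\bg)$, actually holds for the anchored space — the former is guaranteed by the cited QMC literature, while the latter follows because the QMC bounds are expressed purely in terms of the decay exponent and the scaling invariance $\decay(c\bg)=\decay(\bg)$.
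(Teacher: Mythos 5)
Your proposal follows essentially the same route as the paper: the upper bound is inherited from the known univariate lower bound of Novak, the lower bound is obtained from the (higher-order) polynomial lattice rule estimates in the anchored space, and the result is transferred to the standard and ANOVA norms via the embedding theorem using $\decay(c_0\bg)=\decay(\bg)$. One caution about your opening paragraph: you cannot invoke Corollary~\ref{Cor_Trans_Princ} to \emph{equate} the three values $\lambda_{\std}^{\rm det}((K^{\bg,\nn}_s)_{s\in\N})$ at the outset, because its hypothesis --- that the exact value depends on $\bg$ only via $\decay(\bg)$ --- is not known here (when $\decay(\bg)<2r$ the two bounds do not match, and sharpness of the lower bound is open even for $r=1$); what actually transfers, via Theorem~\ref{t3} and the scale-invariance of the decay, is each one-sided bound separately, which is exactly what your second and third paragraphs (and the paper) do, so the proof as executed is sound.
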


\begin{proof}
Since the upper bound 
for $\lambda_{\std}^{\rm det}$
is already known, 
we only provide the proof of the lower bound. 
We first consider the anchored setting $\nn = \ \pitchfork$.
To this end, let $r\ge 1$ be an integer, let $b$ be a prime
and let $1/2 \leq \tau < r$. Then there exists a constant
$C_{r,\tau,b}> 0$ such that for all $m, s \in \N$
there exists a deterministic linear algorithm 
$Q_{m} \in \A^{\det}_{s}$ with cost 
$\std(Q_{m}) = b^m$ and 
\begin{equation}\label{det_pol_latt}
e(Q_{m}, K^{\bg,\pitchfork}_s)  \le (b^m-1)^{-\tau} \;  
\prod_{j=1}^s \left(1+C_{r,\tau,b} \cdot \gamma_j^{1/(2\tau)}
\right)^{\tau},
\end{equation}
see \cite[Thm.~5.3]{DKPS05} for $r=1$ and
\cite[Sec.~5.4]{DG12} for $r \geq 2$. Clearly 
\[
\prod_{j=1}^\infty 
\left(1+C_{r,\tau,b} \cdot \gamma_j^{1/(2\tau)} \right)^{\tau} <
\infty
\hspace{3ex}\text{if and only if}\hspace{3ex} 
\sum_{j=1}^\infty \gamma_j^{1/(2\tau)} < \infty,
\] 
and the latter holds if $\tau < \decay(\bg)/2$ or $\tau = 1/2$. 
Hence the lower bound 
for $\lambda_{\std}^{\rm det}$
holds in the case $\nn = \ \pitchfork$.

Next we consider $\nn \in \{{\rm S}, {\rm A}\}$. From Example 
\ref{exa1} we know that Theorem \ref{t3} is applicable for 
$\na= \ \pitchfork$ and $\nb \in \{{\rm S}, {\rm A}\}$.
Since $\decay(c_0     \bg) = \decay(\bg)$, we obtain the lower bound
for $\lambda_{\std}^{\rm det}$
also in the case $\nn  \in \{{\rm S}, {\rm A}\}$.
\end{proof}

\begin{rem}
If $\decay(\bg)<2r$ the upper and lower bound for 
$\lambda_{\std}^{\rm det}$ in Theorem~\ref{CorDet} do not coincide.
For the case $r=1$ it is conjectured that actually the lower bound
is sharp, see \cite[Open~Problem~72]{NW10}.
\end{rem}

\begin{rem}\label{Rem_Pol_Lat}
The algorithms used to derive the error bound (\ref{det_pol_latt}) 
in \cite{DKPS05,DG12} 
belong to the class of (shifted) polynomial lattice rules. 
Polynomial lattice rules are actually not lattice rules, but QMC-cubature 
rules whose integration points belong to a special family
of $(t,m,s)$-nets. They were introduced by Niederreiter in \cite{N92a}. 
To ensure favorable error bounds, sometimes a shift 
$\boldsymbol{\sigma} \in [0,1)^s$ has to be added to the 
integration points of a polynomial lattice rule $Q$, where the addition 
is meant component-wise modulo 1. The resulting QMC-cubature 
$Q(\boldsymbol{\sigma})$ is then called a shifted polynomial lattice rule.

For a given prime base $b$ an $s$-dimensional polynomial lattice rule 
is constructed with the help of a generating vector 
$\boldsymbol{q}$ whose entries $q_1, \ldots, q_s$ are polynomials over 
the finite field of order $b$. 

For smoothness $r=1$ or for slowly decaying weights
it is sufficient to consider classical
polynomial lattice rules with a shift. In \cite{DKPS05} rules of this 
type  that satisfy (\ref{det_pol_latt}) in the corresponding regime 
$1/2 \le \tau <1$
were constructed by means of a
component-by-component (cbc) algorithm. To exploit higher smoothness 
$r\ge 2$, higher-order polynomial lattice rules
were introduced by Dick and Pillichshammer in \cite{DP07}. In 
\cite[Sec.~5.4]{DG12} the error bound (\ref{det_pol_latt}) was derived 
for $1\le \tau <r$ and higher-order polynomial lattice rules,
without a shift, by utilizing 
\cite[Thm.~3.1]{BDGP11}.

The polynomial lattice rules used to derive (\ref{det_pol_latt}) can be 
constructed by cbc algorithms, based on the fast cbc algorithm from 
\cite{NC06a, NC06b}, requiring $O(rsn^r \ln(n))$ operations and  
$O(n^r)$ memory, see \cite{BDLNP12}. However, it is not known how to 
determine a proper shift, if needed, in a efficient way.
Explicit formulas for the constant $C_{r,\tau,b}$ in 
(\ref{det_pol_latt}) for $1\le \tau <r$ as well as for 
$1/2 \le \tau <1$ can be found in \cite[Sec.~5.4]{DG12}.

We close this remark by mentioning that in the recent paper \cite{God15} 
interlaced polynomial lattice rules have been analyzed that 
serve the same purpose
as higher-order polynomial lattice rules, 
but can be constructed with a cbc algorithm that only requires 
$O(rs n \ln(n))$ operations and $O(n)$ memory.
\end{rem}

\begin{rem} 
The lower bound for $\lambda_{\std}^{{\rm set}}$ in Theorem \ref{CorDet}
in the anchored setting $\nn = \ \pitchfork$
is  a direct consequence of the upper error bound 
(\ref{det_pol_latt}). The key point in the proof of (\ref{det_pol_latt})
in \cite[Sec.~5.3 and 5.4]{DG12} is to bound certain $s$-dimensional 
Walsh norms by $s$-dimensional anchored norms to make use of 
\cite[Thm.~3.1]{BDGP11} and the results from \cite{DKPS05}.
These norm bounds can be established with the help of a multivariate 
Taylor expansion of the integrand in $(a,\ldots,a) \in [0,1]^s$, where 
$a\in [0,1]$ is the anchor 
that defines the norm according to Example \ref{exa1}.
It is crucial that the anchored norm is perfectly suited to work with 
Taylor expansions. This is, e.g., not the case for ANOVA norms,
and therefore it would be elaborate to try to prove the lower
bound from Theorem \ref{CorDet} 
directly for $* = {\rm A}$ without making use of our Theorem \ref{t3}.
\end{rem}

\begin{rem}\label{r44}
Let $\tau < \min \{ \decay(\bg)/2,r\}$.
According to the proof of Theorem \ref{CorDet} and Remark
\ref{Rem_Pol_Lat} there exists a constant $c>0$ with the following
property for every $s \in \N$. For every $m \in \N$ 
a (shifted) polynomial lattice rule $Q_m$ with $b^m$ points in
$[0,1]^s$ is available such that
$e(Q_{m}, K^{\bg,\pitchfork}_s)  \le c \, (b^m-1)^{-\tau}$.
If we consider $\nn \in \{{\rm S}, {\rm A}\}$ 
instead of $\nn = \ \pitchfork$, then the same algorithms $Q_m$
satisfy the same error bound, up to a possibly different
constant $c$ that again does not depend on $s$, see Theorem
\ref{t3}.

These findings carry over to the randomized algorithms
discussed in Remark \ref{Interlaced},
if we take $\nn = {\rm A}$ as the starting
point and then consider $\nn \in \{{\rm S}, \pitchfork\}$.
\end{rem}

\subsection{Randomized Setting}

In the randomized setting several very good QMC algorithms are
known for the integration problem on the ANOVA spaces
$H( K_s^{\bg, \text{A}})$. For $r=1$ one may take the scrambled 
Niederreiter nets analyzed in \cite{YH05} or the scrambled polynomial 
lattice rules analyzed in \cite{BD11}, and for $r\ \geq 2$ one may
use the interlaced scrambled polynomial lattice rules considered in
\cite{GD12, DG13}.

So far, however, there were no good randomized algorithms
known for the anchored spaces $H( K_s^{\bg, \pitchfork})$, 
and this holds true even for $r=1$.

For instance, Hickernell et al.~\cite{HMNR10} have studied
infinite-dimensional integration in the anchored setting for $r=1$,
using single- and multilevel algorithms with
classical Monte Carlo methods as finite-dimensional building blocks. 
To achieve better results for infinite-dimensional integration,
they have asked for finite-dimensional integration algorithms on 
$H( K_s^{\bg, \pitchfork})$  superior to Monte Carlo, see the last 
sentences in \cite[Sec.~4.3 and 5.3]{HMNR10}.

With the help of the results from Section~\ref{SEC_EMB} we can deduce 
that all linear algorithms that perform well on the ANOVA space also 
perform well on the anchored Sobolev space. In particular, we are able to 
present randomized QMC algorithms, namely interlaced scrambled polynomial 
lattice rules, which outperform classical Monte Carlo algorithms
substantially.

We state the main result of this section, which was partially known 
already. More precisely, the upper bound for $\lambda_{\std}^{\rm ran}$
follows directly from the well-known lower bound for the
minimal error for univariate 
integration on $W^{r,2}[0,1]$ in the randomized setting, see 
\cite[Prop.~1(ii), Sec.~2.2.9]{Nov88}. 
In fact, this result holds for all randomized algorithms.
The lower bound was known before only in the case $\nn =
{\rm A}$; for $r=1$ we refer to \cite{BD11} and for $r \geq 2$ we
refer to \cite[Thm.~5.1]{DG13}, which makes use of \cite[Thm.~1]{GD12}.

\begin{theo}\label{CorRan}
Let $H=W^{r,2}[0,1]$ and 
$*\in\{ {\rm S}, {\rm A}, \pitchfork\}$. Then we have
\[
\min \left\{ \frac{\decay(\bg)}{2}, r+ \frac{1}{2} \right\} 
\le \lambda_{\std}^{\rm ran}((K^{\bg,*}_s)_{s\in\N}) \le r+\frac{1}{2}.
\]
\end{theo}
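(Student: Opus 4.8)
The plan is to follow exactly the same structure as the proof of Theorem~\ref{CorDet}, exploiting the transfer principle from Theorem~\ref{t3} together with the fact that $\decay(c_0\bg) = \decay(\bg)$. The statement has two halves, an upper and a lower bound, and only the lower bound requires genuine work since, as the surrounding text indicates, the upper bound $\lambda_{\std}^{\rm ran} \le r + \tfrac12$ follows directly from the classical lower bound for the $n$th minimal error of univariate randomized integration on $W^{r,2}[0,1]$ recorded in \cite[Prop.~1(ii), Sec.~2.2.9]{Nov88}. This univariate lower bound holds for all randomized algorithms and hence caps the achievable exponent uniformly in $s$.

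For the lower bound on $\lambda_{\std}^{\rm ran}$, I would begin with the ANOVA setting $\nn = \mathrm{A}$, which is the favorable case for randomized QMC methods. First I would invoke the known randomized algorithms---interlaced scrambled polynomial lattice rules for $r \ge 2$ (\cite{GD12, DG13}) and scrambled nets or scrambled polynomial lattice rules for $r=1$ (\cite{YH05, BD11})---to produce, for each $\tau < \min\{\decay(\bg)/2,\, r+\tfrac12\}$, a constant $c>0$ and a family of randomized algorithms $Q_m \in \A^{\ran}_s$ with $n = b^m$ points achieving a worst-case error bound of the form
\[
e(Q_m, K^{\bg,\mathrm{A}}_s) \le c \,(b^m)^{-\tau}
\]
uniformly in $s$, exactly as in the deterministic case but with the exponent raised by $\tfrac12$ due to the variance reduction from scrambling. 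The product structure controlling the $s$-dependence collapses under the summability condition $\sum_j \gamma_j^{1/(2\tau)} < \infty$, which holds precisely when $\tau < \decay(\bg)/2$ or $\tau = 1/2$. This establishes the lower bound $\lambda_{\std}^{\rm ran}((K^{\bg,\mathrm{A}}_s)_{s\in\N}) \ge \min\{\decay(\bg)/2,\, r+\tfrac12\}$.

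The crucial second step transfers this to $\nn \in \{\mathrm{S}, \pitchfork\}$. Here I would apply Theorem~\ref{t3} with $\na = \mathrm{A}$ and $\nb \in \{\mathrm{S}, \pitchfork\}$, which is legitimate since Example~\ref{exa1} verifies that both pairs of seminorms satisfy \ref{a2} and \ref{a3}. Theorem~\ref{t3} gives, for each $s \in \N$ and each $Q \in \A^{\ran}_s$, the two-sided estimate relating $e(Q, K^{\bg,\mathrm{A}}_s)$ and $e(Q, K^{c_0^{\pm1}\bg,\nb}_s)$ up to a constant $c$ independent of $s$ and $n$. Since $\decay(c_0\bg) = \decay(c_0^{-1}\bg) = \decay(\bg)$, the achievable exponent is preserved under the weight rescaling, and the same family of algorithms $Q_m$ (up to an $s$-independent constant in the error) yields the identical lower bound for $\nn \in \{\mathrm{S}, \pitchfork\}$.

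The main obstacle---conceptual rather than technical---is that no good randomized algorithms for the anchored space $H(K^{\bg,\pitchfork}_s)$ were previously known, even for $r=1$, as the preceding discussion of \cite{HMNR10} emphasizes. Attempting a direct construction would be difficult because the anchored norm is ill-suited to the variance estimates underlying scrambling, just as the ANOVA norm is ill-suited to the Walsh/Taylor arguments in the deterministic case. The embedding approach circumvents this entirely: rather than constructing algorithms adapted to the anchored norm, we borrow the ANOVA-optimal algorithms and transport their error guarantees across the relaxed norm equivalence \eqref{g9}. Thus the only real work is assembling the ANOVA-case lower bound from the cited QMC results and then citing Theorem~\ref{t3}; the transfer step itself requires no new effort, which is precisely the advantage the authors advertise.
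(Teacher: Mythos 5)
Your proposal is correct and follows essentially the same route as the paper: the upper bound is quoted from the univariate randomized lower bound in \cite[Prop.~1(ii), Sec.~2.2.9]{Nov88}, the lower bound is first established in the ANOVA case via the error bound for (interlaced) scrambled polynomial lattice rules from \cite[Thm.~5.1]{DG13} under the summability condition $\sum_j \gamma_j^{1/(2\tau)} < \infty$, and the result is then transferred to $\nn \in \{\mathrm{S}, \pitchfork\}$ by Theorem~\ref{t3} together with $\decay(c_0\bg) = \decay(\bg)$. No gaps.
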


\begin{proof}
Since the upper bound 
for $\lambda_{\std}^{\rm ran}$
is already known, we only 
provide the proof of the lower bound. 
We first consider the ANOVA setting $\nn = {\rm A}$.
To this end, let $r\ge 1$ be an integer, let $b$ be a prime,
and let $1/2 \leq \tau < r +1/2$. Then there exists a constant
$C_{r,\tau,b}> 0$ such that for all $m, s \in \N$
there exists an unbiased randomized linear algorithm $Q_{m} \in
\A^{\ran}_{s}$ 
with cost ${\std}(Q_{m})=b^m$ and
\begin{equation}\label{ran_pol_latt}
e(Q_{m}, f)^2 
\le (b^{ m}-1)^{-2\tau} \;  
\bigg[ C_{r,\tau,b} \prod_{j=1}^s \left(1+C_{r,\tau,b} \cdot 
\gamma_j^{1/(2\tau)} \right) \bigg]^{2\tau} \|f\|^2_{K^{\bg, {\rm A}}_s}
\end{equation}
for all $f\in H(K^{\bg,{\rm A}}_s)$, see \cite[Thm.~5.1]{DG13}.
We proceed as in the proof of Theorem \ref{CorDet}
to derive the lower bound 
for $\lambda_{\std}^{\rm ran}$
in the case $\nn = {\rm A}$
and to extend this result to the case $\nn  \in \{{\rm S}, \pitchfork\}$.
\end{proof}

\begin{rem}\label{Interlaced}
The algorithms used to derive the error bound (\ref{ran_pol_latt}) belong 
to the class of interlaced scrambled polynomial lattice rules. 
For a given prime base $b$ an interlaced scrambled polynomial lattice 
rule $Q$ of order $r$ consisting of $b^m$ points in dimension $s$ is 
constructed in the following way: First an ordinary polynomial lattice 
rule with $b^m$ points in dimension $rs$ is generated and afterwards 
the points are randomized via Owen's $b$-ary digit scrambling \cite{O95}. 
Then each of the resulting $rs$-dimensional points 
$x=(x_1,x_2,\ldots, x_{rs})$ is mapped to an $s$-dimensional point
\begin{equation*}
\big( \mathcal{D}_r(x_1,\ldots,x_r), 
\mathcal{D}_r(x_{r+1},\ldots,x_{2r}),\ldots, 
\mathcal{D}_r(x_{r(s-1)+1},\ldots,x_{rs}) \big)
\end{equation*} 
by applying the digit interlacing function
\begin{equation*}
\mathcal{D}_r: [0,1)^r \to [0,1)\,, \,
(y_1,\ldots,y_r) \mapsto 
\sum_{i=1}^\infty \sum^r_{j=1} y_{j,i} b^{-j-(i-1)r},
\end{equation*}
where $y_j = y_{j,1}b^{-1} + y_{j,2}b^{-2} + \ldots $ for $1\le j \le r$.
Interlacing is important to achieve the higher order convergence 
rate $r+1/2$ for $r\ge 2$; notice that for $r=1$ we have that 
$\mathcal{D}_r$ is the identity mapping on $[0,1)$ and consequently 
``interlaced scrambled polynomial lattice rules of order $1$'' are 
nothing but ordinary scrambled polynomial lattice rules. 
For more details see, e.g., \cite{DG13, GD12}.

Notice that Owen's scrambling procedure implies that each point of 
the resulting interlaced scrambled polynomial lattice rule $Q$ is
uniformly distributed on $[0,1)^s$. Hence $Q$ is unbiased, 
implying 
\[
e(Q,f)^2 = \Var(Q(f))
\]
for every integrand $f$.

Interlaced polynomial lattice rules were introduced by Dick in \cite{D11a}.
In \cite{GD12} it was shown that for product weights the construction 
cost of the component-by-component (cbc) algorithm that generates the 
interlaced polynomial lattices rules, based on the fast cbc algorithm 
from \cite{NC06a, NC06b}, is of order $O(rsmb^m)$ operations using $
O(b^m)$ memory. 

These cubature rules settle the question in \cite[p.~245]{HMNR10} for 
good randomized algorithms for finite-dimensional integration 
in the anchored Sobolev space. In particular, they can be employed 
to establish error bounds for infinite-dimensional integration 
in the fixed subspace sampling model that improve the corresponding 
results in \cite{HMNR10} substantially, see Section \ref{FSS}.
\end{rem}

\begin{rem}
The lower bound for $\lambda_{\std}^{\rm ran}$ in the ANOVA setting is 
a direct consequence of the upper error bound (\ref{ran_pol_latt}),
which actually is a bound on the variance of $Q_{m}(f)$,
since $Q_{m}$ is unbiased for every $f: D^s \to \R$.
To analyze the variance, it is convenient to use the ANOVA%
\footnote{ANOVA  is an acronym for ``analysis of variance''.} 
decomposition of $Q_{m}(f)$. Due to our specific randomization 
it turns out that
\begin{equation*}
\left[ Q_{m}(f) \right]_u = Q_{m}(f_u)
\end{equation*}
for all $u\in 1:s$;
here $\left[ Q_{m}(f) \right]_u$ denotes the $u$th ANOVA component 
of $Q_{m}(f)$ with respect to the randomness induced by the 
scrambling procedure and $f_u\in H(k_u)$ denotes the $u$th ANOVA 
component of $f$ with respect to $\rho^s$, cf.~(\ref{ku}) and 
(\ref{sumofnorms}).
A rigorous formulation of this ``ANOVA invariance principle'' can
be found in \cite[Lem.~2.1]{BG12}.
Hence 
\begin{equation*}
\Var \left( Q_{m}(f) \right) = 
\sum_{u\subseteq 1:s} \Var \left( Q_{m}(f_u) \right).
\end{equation*}
This fact and the identity
\begin{equation*}
\|f\|^2_{K^{\bg,{\rm A}}_s} = 
\sum_{u\subseteq 1:s} \gamma_u^{-1} \|f_u\|_{k_u}^2,
\end{equation*}
cf. (\ref{sumofnorms}), are essential in the analysis of the 
cbc algorithm that generates the integration rules $Q_{m}$.

It is not clear to the authors how to prove the lower bound for 
$\lambda_{\std}^{\rm ran}$ in the anchored setting directly 
without using Theorem \ref{t3}.
\end{rem}

\begin{rem}
A basic issue is to decide whether randomized algorithms are
superior to deterministic algorithms, i.e., whether
$\lambda^{{\rm ran}}_{\std} > \lambda^{{\rm det}}_{\std}$.
For a positive answer a lower bound on $\lambda^{{\rm ran}}_{\std}$ and 
an upper bound on $\lambda^{{\rm det}}_{\std}$ is needed, and the
converse is needed for a negative answer.
Due to Theorems \ref{CorDet} and \ref{CorRan}
the superiority holds true if
\[
\decay(\bg) > 2r.
\]
\end{rem}

\section{Results for Infinite-Dimensional Integration}\label{RIDI}

Consider the setting from Section \ref{s3} for
\begin{align*}
s=\infty.
\end{align*}
As in the previous section,  
anchored spaces are much more suited for a direct analysis
of deterministic algorithms, while ANOVA spaces are much more suited
for a direct analysis of randomized algorithms, see the discussion
of the literature below. Embeddings and norm estimates allow
to transfer the respective results.

\subsection{Cost Models for Infinite-Dimensional Integration}

In contrast to finite-dimensional integration,
the choice of an appropriate cost model is an issue
in the present setting. Here we do not have a canonical cost model
anymore, but the models that are studied in the literature
share the following feature. The cost of a single function
evaluation at a point $\bst$ is no longer independent of $\bst \in D^\N$,
and not even uniformly bounded in $\bst$.

We present three such cost models. All of them only account for 
the cost of function evaluations, as in the finite-dimensional case, and
they employ a nondecreasing function $\$\colon \N_0 \to [1,\infty)$ and 
a ``default value'' $a\in D$.

Let
\[
\T_u = \{ \bst \in D^\N \mid \text{$\bst_j = a$ for all $j \in \N
\setminus u$}\}
\]
for any finite subset $u \subseteq \N$, where $\bst_j$ denotes the 
$j$th component of 
$\bst$, and let $Q\in \A^{{\rm ran}}_{\infty}$ denote any 
randomized linear algorithm of the form \eqref{Qn}.
In the sequel, we use the convention $\min \emptyset=\infty$.

Fixed subspace sampling basically means that all function 
evaluations of an algorithm have to take place in a set $\T_{1:s}$ with a 
fixed value of $s$, and the same cost $\$(s)$ is assigned to any such 
evaluation. In the fixed subspace sampling model the cost of $Q$ is 
therefore given by
\[
{\rm fix}(Q)
= n \cdot
\min\{ \$(s) \mid 
\text{$s\in\N_0$ such that 
$\bst^{(1)}(\omega) , \dots, \bst^{(n)}(\omega) 
\in \T_{1:s}$ for all $\omega \in \Omega$} \}.
\]
This model directly corresponds to the classical
approach to infinite-dimensional integration,
namely the approximation by an $s$-dimensional
integration problem by setting all variables
with indices $j > s$ to the default value $a$.

In the two other models all function evaluations of
an algorithm take place in the set $\bigcup_{s \in \N} \T_{1:s}$.
The cost for an evaluation at a point $\bst$ from this set
is either determined by the maximal value of $j$ such that
$\bst_j \neq a$ or by the number of components of $\bst$ that
are different from $a$.

In the nested subspace sampling model the cost of $Q$ is given by
\begin{align*}
{{\rm nest}}(Q)
= \sum_{i=1}^n
\min\{ \$(s) \mid 
\text{$s\in\N_0$ such that 
$\bst^{(i)}(\omega) \in \T_{1:s}$ for all $\omega \in \Omega$} \}.
\end{align*}
This model was introduced in \cite{CDMR09}
in a more general setting, and it was actually called 
``variable subspace sampling model''. We prefer the name 
``nested subspace sampling model'' to clearly distinguish this model 
from the cost model we present next.
In the unrestricted subspace sampling model the cost of $Q$ is given by
\begin{align*}
{{\rm unr}}(Q)
=\sum_{i=1}^n
\min\{ \$(|u|) \mid 
\text{$u \subseteq \N$ finite such that 
$\bst^{(i)}(\omega) \in \T_u$ for all $\omega \in \Omega$} \}.
\end{align*}
The unrestricted subspace sampling model was introduced in 
\cite{KSWW10a} (where it did not get a specific name).
In the definition of the three cost functions 
a certain property is required to hold for all $\omega \in
\Omega$. Often this worst case point of view is 
replaced by an average case. We stress that such a replacement would 
not affect the cost of the algorithms that we use to establish our 
upper bounds for the $n$th minimal errors.
However, to streamline the presentation we consider the 
worst case.

Obviously ${{\rm unr}}(Q) \leq {{\rm nest}}(Q) \leq {{\rm fix}}(Q)$,
so that the corresponding minimal errors satisfy
\begin{align}\label{triv1}
e^{{\rm set}}_{{\rm unr}}(n,K^\bg_\infty)
\leq e^{{\rm set}}_{{\rm nest}}(n,K^\bg_\infty)
\leq e^{{\rm set}}_{{\rm fix}}(n,K^\bg_\infty)
\end{align}
for ${\rm set} \in \{\ran, \det\}$.
Furthermore, 
\begin{align}\label{triv2}
e^{{\rm ran}}_{{\rm cost}}(n,K^\bg_\infty)
\leq e^{{\rm det}}_{{\rm cost}}(n,K^\bg_\infty),
\end{align}
where ${\rm cost} \in \{ {\rm fix}, {\rm nest}, {\rm unr}\}$. 
In order to simplify the presentation we put
\begin{align}\label{lambdacost}
\lambda^{{\rm set}}_{{\rm cost}}=
\lambda^{{\rm set}}_{{\rm cost}}(K^\bg_\infty)
=\sup \Bigl\{ \alpha \ge 0 \mid \sup_{n\in \N} 
e^{\rm set}_{{\rm cost}}(n,K^\bg_\infty) \cdot  n^\alpha < \infty
\Bigr\}.
\end{align}
The inequalities \eqref{triv1} and \eqref{triv2} directly yield
\begin{align*}
\lambda^{{\rm set}}_{{\rm fix}}
\leq \lambda^{{\rm set}}_{{\rm nest}}
\leq \lambda^{{\rm set}}_{{\rm unr}}
\qquad
\text{and}
\qquad
\lambda^{{\rm det}}_{{\rm cost}}
\leq \lambda^{{\rm ran}}_{{\rm cost}}.
\end{align*}

\subsection{General Results}

Let $\|\cdot\|_{1,\na}$ and $\|\cdot\|_{2,\na}$ as well as
$\|\cdot\|_{1,\nb}$ and $\|\cdot\|_{2,\nb}$ be two pairs 
of seminorms on $H$, both satisfying 
\ref{a2} and \ref{a3}.
For infinite-dimensional integration we have a similar 
transfer principle as for multivariate integration.

\begin{cor}\label{Principle_infty}
Let ${\rm cost} \in \{ {\rm fix}, {\rm nest}, {\rm unr}\}$ 
and ${\rm set} \in \{\ran, \det\}$.
Suppose that  
$\lambda^{\rm set}_{{\rm cost}}(K^{\bg, \na}_\infty)$ depends on $\bg$ only
via $\decay(\bg)$. 
Then we get 
\[
\lambda^{\rm set}_{{\rm cost}}(K^{\bg,\na}_\infty) =
\lambda^{\rm set}_{{\rm cost}}(K^{\bg,\nb}_\infty).
\]
\end{cor}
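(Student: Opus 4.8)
The plan is to deduce everything from Theorem~\ref{t3}, specialized to $s=\infty$ and to the cost functions ${\rm cost}\in\{{\rm fix},{\rm nest},{\rm unr}\}$. The first thing I would observe is that each of these three cost functions is a legitimate instance of \eqref{modfunc}: it assigns to an algorithm $Q$ a value in $[1,\infty]$ that is determined solely by the knots $\bst^{(i)}$ and the default value $a$, not by the weights. Since Theorem~\ref{theo2} guarantees that the domains $\X^{c_0\bg,\nb}$, $\X^{c_0^{-1}\bg,\nb}$, and $\X^{\bg,\na}$ all coincide, the admissible algorithm classes $\A^{\rm set}_\infty$ and the values of ${\rm cost}$ on them are identical across the two scales of spaces. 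Hence Theorem~\ref{t3} applies verbatim and yields a constant $c\ge1$ with
\begin{equation}\label{plan1}
c^{-1}\, e^{\rm set}_{\rm cost}(n,K^{c_0\bg,\nb}_\infty)
\le e^{\rm set}_{\rm cost}(n,K^{\bg,\na}_\infty)
\le c\, e^{\rm set}_{\rm cost}(n,K^{c_0^{-1}\bg,\nb}_\infty)
\end{equation}
for all $n\in\N$.

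Next I would translate \eqref{plan1} into inequalities between the exponents defined in \eqref{lambdacost}. The key elementary fact is that replacing a sequence of minimal errors by a constant multiple leaves the exponent $\lambda^{\rm set}_{\rm cost}$ unchanged: if $e_1(n)\le c\,e_2(n)$ for all $n$, then every admissible $\alpha$ for $e_2$ is admissible for $e_1$, whence $\lambda(e_1)\ge\lambda(e_2)$. Applying this to the two halves of \eqref{plan1} gives
\[
\lambda^{\rm set}_{\rm cost}(K^{c_0\bg,\nb}_\infty)
\ge \lambda^{\rm set}_{\rm cost}(K^{\bg,\na}_\infty)
\ge \lambda^{\rm set}_{\rm cost}(K^{c_0^{-1}\bg,\nb}_\infty).
\]

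Finally I would exploit the scale invariance built into the hypothesis. Since $\decay(c\bg)=\decay(\bg)$ for every $c>0$ and $c_0\bg$, $c_0^{-1}\bg$ are again summable, the assumption forces $\lambda^{\rm set}_{\rm cost}(K^{c_0\bg,\na}_\infty)=\lambda^{\rm set}_{\rm cost}(K^{c_0^{-1}\bg,\na}_\infty)=\lambda^{\rm set}_{\rm cost}(K^{\bg,\na}_\infty)$. I would then apply the previous display once with $\bg$ replaced by $c_0^{-1}\bg$, whose left inequality reads $\lambda^{\rm set}_{\rm cost}(K^{\bg,\nb}_\infty)\ge\lambda^{\rm set}_{\rm cost}(K^{c_0^{-1}\bg,\na}_\infty)=\lambda^{\rm set}_{\rm cost}(K^{\bg,\na}_\infty)$, and once with $\bg$ replaced by $c_0\bg$, whose right inequality reads $\lambda^{\rm set}_{\rm cost}(K^{\bg,\na}_\infty)=\lambda^{\rm set}_{\rm cost}(K^{c_0\bg,\na}_\infty)\ge\lambda^{\rm set}_{\rm cost}(K^{\bg,\nb}_\infty)$. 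Sandwiching these two yields the claimed equality. The one point requiring care --- and where I would be most attentive --- is the bookkeeping of the weight scalings: one must invoke the decay hypothesis only at summable sequences and line up the two substitutions so that the $\na$-terms at scaled weights cancel against the unscaled one. All the analytic substance is already contained in Theorems~\ref{t3} and~\ref{theo2}, which have done the heavy lifting.
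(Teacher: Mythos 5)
Your proposal is correct and follows exactly the route the paper takes: the paper's proof is the one-line remark that the claim ``follows directly from Theorem~\ref{t3}'' because multiplying the weights by a positive constant does not change $\decay(\bg)$, and your write-up simply makes explicit the passage from the two-sided error estimate to the exponent inequalities and the two substitutions $\bg\mapsto c_0^{-1}\bg$ and $\bg\mapsto c_0\bg$ that close the sandwich. Your additional observation that Theorem~\ref{theo2} guarantees the domains (and hence the algorithm classes and cost values) agree across the two scales is a detail the paper leaves implicit, and it is handled correctly.
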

 
Since multiplication of the weights $\bg$ by a positive constant does 
not affect $\decay(\bg)$, Corollary \ref{Principle_infty} follows 
directly from Theorem \ref{t3}.  Obviously, a similar conclusion can
be drawn for upper and lower bounds on $\lambda^{{\rm set}}_{\rm cost}$ 
that only depend on the decay of the weights.

In the deterministic setting we have another general result,
which deals with a single pair 
of seminorms on $H$ that satisfies \ref{a2} and \ref{a3} and
with the corresponding kernels $1+k_1$ and $K^{\bg}_\infty$.
A second pair of such seminorms is only employed in the proof.
We say that a kernel $k:D\times D\to \R$ is anchored, if
there exists an $a\in D$ with $k(a,\cdot) = 0$. 
In the setting of Remark~\ref{r0} with an anchored kernel
$k$, the following theorem was already known before,
see \cite[Thm.~2]{PW11}. More precisely, 
for this particular case,
the lower bound on
$\lambda^{{\rm det}}_{\unr}$ was established in \cite{PW11} and the
upper bound can be derived from the analysis in \cite[Sec.~3.3]{KSWW10a}.
In analogy to \eqref{lambdacost} we define
\begin{align*}
\lambda^{\rm det}_{\std}(1+k_1)
=\sup \Bigl(
\Bigl\{ \alpha \ge 0 \,\Big|\, \sup_{n\in \N} 
e^{\rm det}_{{\std}}(n,1+k_1) \cdot  n^\alpha < \infty \Bigr\}
\Bigr),
\end{align*}
which deals with one-dimensional integration on the Hilbert space
$H=H(1+k_1)$, equipped with the norm $\|\cdot\|_H=\|\cdot\|_{1+k_1}$.

\begin{theo}\label{Theo_UB_PW}
If the cost function $\$$ satisfies $\$(\nu) = \Omega(\nu)$ and 
$\$(\nu) = O(e^{\sigma \nu})$ for some $\sigma\in (0,\infty)$,
then we have
\begin{equation*}
\lambda^{{\rm det}}_{\unr}(K^{\bg}_\infty) =
\min \left\{ \lambda^{{\rm det}}_{\std}(1+k_1),\,
\frac{{\rm decay}(\bg) - 1}{2} \right\}.
\end{equation*}
\end{theo}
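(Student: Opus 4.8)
The plan is to reduce the assertion to the already known anchored case and then invoke the infinite-dimensional transfer principle of Corollary~\ref{Principle_infty}. Denote the given pair of seminorms, together with its univariate kernel $1+k_1$ and its kernel $K^{\bg}_\infty$, by $\nb$. I would introduce an auxiliary second pair $\na$ whose associated univariate kernel is \emph{anchored}, so that the formula established in \cite[Thm.~2]{PW11} (with the matching upper bound from \cite[Sec.~3.3]{KSWW10a}) applies verbatim to $\na$, and then push the resulting equality of exponents from $\na$ to $\nb$.

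To build the anchored pair, fix the default value $a\in D$ of the unrestricted cost model and consider the point-evaluation functional $\xi(f)=f(a)$, which is bounded on $H$ because $(H,\|\cdot\|_H)$ is a reproducing kernel Hilbert space by \ref{a3}. Following Remark~\ref{r3a} (so that we land in the situation of Remark~\ref{rem1}), the seminorms $\|f\|_{1,\na}=|\xi(f)|$ and $\|f\|_{2,\na}=\|f-\xi(f)\|$ satisfy \ref{a2} and \ref{a3}, and the induced kernel obeys $k_{\gamma,\na}=\gamma\cdot k$ for a single kernel $k$. Since $H(k)=\{f\in H\mid f(a)=0\}$, we have $\langle g,k(\cdot,a)\rangle_k=g(a)=0$ for all $g\in H(k)$, so $k(\cdot,a)\perp H(k)$ while $k(\cdot,a)\in H(k)$; hence $k(a,\cdot)=0$, i.e.\ $k$ is anchored at $a$, and $k\neq0$ because $H(1)\subsetneq H$ by \ref{a1}. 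This places $\na$ exactly in the setting of Remark~\ref{r0} with an anchored kernel whose anchor coincides with the default value of the cost model, so the cited prior work gives, under the imposed growth conditions $\$(\nu)=\Omega(\nu)$ and $\$(\nu)=O(e^{\sigma\nu})$,
\[
\lambda^{{\rm det}}_{\unr}(K^{\bg,\na}_\infty)=\min\Bigl\{\lambda^{{\rm det}}_{\std}(1+k_{1,\na}),\ \tfrac{\decay(\bg)-1}{2}\Bigr\}.
\]

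The decisive observation is that the univariate exponent is insensitive to the choice of pair. For $s=1$ the norms $\|\cdot\|_{1+k_{1,\na}}$ and $\|\cdot\|_{1+k_{1,\nb}}$ are equivalent norms on the common space $H$ (Lemma~\ref{lem10}, or Theorem~\ref{lem1} with $\gamma$ fixed), so their unit balls are comparable up to a constant factor, while the integration functional $I_1$ and the standard cost are the same in both cases. Consequently $e^{{\rm det}}_{\std}(n,1+k_{1,\na})$ and $e^{{\rm det}}_{\std}(n,1+k_{1,\nb})$ differ only by a bounded factor, whence $\lambda^{{\rm det}}_{\std}(1+k_{1,\na})=\lambda^{{\rm det}}_{\std}(1+k_{1,\nb})$. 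In particular the right-hand side above depends on $\bg$ solely through $\decay(\bg)$, so the hypothesis of Corollary~\ref{Principle_infty} is met for the pair $\na$ with cost $\unr$ in the deterministic setting. That corollary then yields $\lambda^{{\rm det}}_{\unr}(K^{\bg,\na}_\infty)=\lambda^{{\rm det}}_{\unr}(K^{\bg,\nb}_\infty)$, and combining this with the displayed anchored formula and the equality of univariate exponents gives the claim for the original pair $\nb$, that is, with $k_1=k_{1,\nb}$.

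The main obstacle is not any single estimate but the correct arrangement of the reduction: one must ensure that the auxiliary kernel is genuinely admissible (nondegenerate and anchored at the \emph{same} point $a$ used in the cost model) and that the appeal to Corollary~\ref{Principle_infty} is legitimate, i.e.\ that the transfer principle of Theorem~\ref{t3} leaves the cost untouched. The latter holds because $\unr(Q)$ is determined by the evaluation nodes and their supports alone and never by the ambient norm, so a single algorithm $Q$ carries the same cost in both scales while its worst-case error changes only by the uniform constant supplied by Theorem~\ref{t3}. Verifying the domain compatibility $\X^{\bg,\na}=\X^{\bg,\nb}$ via Theorem~\ref{theo2} and the one-dimensional norm equivalence are the only places where the abstract assumptions \ref{a1}--\ref{a3} are actually exploited, and these are exactly the ingredients that make a direct proof for general (non-anchored) kernels, cf.\ \cite{DG13}, unnecessary.
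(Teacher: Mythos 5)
Your proposal is correct and follows essentially the same route as the paper's own proof: introduce the auxiliary anchored pair via the point evaluation $\xi(f)=f(a)$ as in Remark~\ref{r3a}, note that $k_{1}$ for this pair is anchored at $a$ (Remark~\ref{rem1}) so that \cite[Thm.~2]{PW11} applies, observe that the univariate exponent is unchanged by the norm equivalence, and transfer back with Corollary~\ref{Principle_infty}. Your additional checks (nondegeneracy of the anchored kernel, invariance of the cost under the change of norm) are correct elaborations of points the paper leaves implicit.
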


\begin{proof}
We choose an arbitrary $a\in D$ and consider the bounded linear 
functional $\xi$ on $H$ that is given by $\xi(f)=f(a)$ 
for each $f\in H$. We define a new pair of seminorms on this space
by 
\[
\|f\|_{1,\nb} = |\xi(f)|
\]
and 
\[
\|f\|_{2,\nb} = \|f-\xi(f)\|_{H},
\]
see Remark \ref{r3a}. 
We have $\lambda^{{\rm det}}(1+k_1) = 
\lambda^{{\rm det}}(1+k_{1, \nb})$,
since $\|\cdot \|_{H}$ and $\|\cdot\|_{1+k_{1,\nb}}$ 
are equivalent norms.
Moreover $k_{1,\nb}(a,\cdot)=0$ according to Remark \ref{rem1}.
Notice that the assumptions of \cite[Thm.~2]{PW11} are fulfilled
for the space $H(K^{\bg,\nb}_\infty)$.
Indeed, the algorithms that satisfy \cite[Eqn.~(10)]{PW11} 
can be obtained from univariate linear quadrature rules with
convergence rates arbitrarily close to 
$\lambda^{{\rm det}}(1+k_1)$
with the help of Smolyak's construction, see \cite[Sec.~3.3]{PW11}.
Now \cite[Thm.~2]{PW11} ensures that the statement of 
Theorem \ref{Theo_UB_PW} holds for $H(K^{\bg,\nb}_\infty)$. 
Due to Corollary \ref{Principle_infty} it thus also holds for 
$H(K^{\bg}_\infty)$. 
\end{proof}

\begin{rem}\label{r33}
The algorithm used to establish the lower bound on 
$\lambda^{{\rm det}}_{\unr}$ in Theorem \ref{Theo_UB_PW}
is a multivariate decomposition method, formerly known as 
changing dimension algorithm. This type of algorithm
was introduced and
analyzed in \cite{KSWW10a} and the analysis was refined in \cite{PW11}. 
This analysis crucially relies
on the anchored decomposition in the space of integrands. 
According to Theorem \ref{Theo_UB_PW} the 
multivariate decomposition methods is applicable far beyond the anchored
setting. This also applies to the algorithms used to establish 
the lower bounds on $\lambda^{{\rm set}}_{\unr}$ in Theorem~\ref{Unr_Sob}.
\end{rem}

\subsection{Tensor Products of Weighted Sobolev Spaces}

Now we turn to the particular
case of $\infty$-fold weighted tensor products
$H(K^{\bg, *}_\infty)$ of the Sobolev spaces $W^{r,2}[0,1]$ of 
smoothness $r\in\N$,  where $*\in\{ {\rm S}, {\rm A}, \pitchfork\}$, see
Example~\ref{exa1}. 

\subsubsection{Unrestricted Subspace Sampling}

In the anchored case $\nn= \ \pitchfork$ the
statement of the next theorem was known in the deterministic
setting, see \cite[Thm.~2, Sec.~3.3]{PW11} and \cite[Sec.~3.3]{KSWW10a}.
The upper bound on
$\lambda^{\ran}_{\unr}(K^{\bg,\pitchfork}_\infty)$ was  known
before, see \cite[Sec.~3.2.1]{Gne12a}, and the lower bound for 
$\lambda^{\ran}_{\unr}(K^{\bg, \pitchfork}_\infty)$ was known for 
$r=1$, see \cite[Exmp.~2]{PW11}. 
In the ANOVA case $\nn = {\rm A}$ the statement of the Theorem was
known in the randomized setting for arbitrary $r\in\N$, see
\cite[Cor.~5.3]{DG13}, 
where the algorithms from \cite{PW11} were modified
for the ANOVA setting.
We add that the same result was 
proved in \cite{DG13} for the class of finite-intersection weights. 
Furthermore, the upper bound on  
$\lambda^{\ran}_{\unr}(K^{\bg, {\rm A}}_\infty)$ verified 
in \cite{DG13} holds for much more general randomized algorithms 
than for the linear algorithms of the form (\ref{Qn}).
See also Table \ref{my-label1} for an overview of known
and new results.

\begin{theo}\label{Unr_Sob}
Let  $\$$ satisfy $\$(\nu) = \Omega(\nu)$ and 
$\$(\nu) = O(e^{\sigma \nu})$ for some $\sigma\in (0,\infty)$. 
Moreover, let $H=W^{r,2}[0,1]$ and 
$*\in\{ {\rm S}, {\rm A}, \pitchfork\}$. Then we have
\begin{equation*}
\lambda^{\det}_{\unr}(K^{\bg,*}_\infty) = 
\min \left\{ r, \frac{\decay(\bg) -1}{2} \right\}
\end{equation*}
and 
\begin{equation*}
\lambda^{\ran}_{\unr}(K^{\bg,*}_\infty) = 
\min \left\{ r + \frac{1}{2}, \frac{\decay(\bg) -1}{2} \right\}.
\end{equation*}
\end{theo}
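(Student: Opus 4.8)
The plan is to reduce each of the two identities to the analogous statement for a single, favorably chosen norm, and then to propagate it to the other two norms by the transfer principle of Corollary~\ref{Principle_infty}. The latter is applicable precisely because the right-hand sides $\min\{r,(\decay(\bg)-1)/2\}$ and $\min\{r+\tfrac12,(\decay(\bg)-1)/2\}$ depend on $\bg$ only through $\decay(\bg)$, and because $\decay(c\bg)=\decay(\bg)$ for every $c>0$.

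For the deterministic identity I would start from the anchored norm $\nn=\pitchfork$. By Remark~\ref{rem1} the kernel $k_{1,\pitchfork}$ is anchored, and the one-dimensional rate $\lambda^{\det}_{\std}(1+k_{1,\pitchfork})$ equals $r$, the classical optimal order of deterministic quadrature on $W^{r,2}[0,1]$ (good quadrature rules for the upper bound, bump functions for the matching lower bound, cf.\ \cite[Prop.~1, Sec.~1.3.12]{Nov88}). The growth hypotheses on $\$$ are exactly those of Theorem~\ref{Theo_UB_PW}, which therefore applies and gives
\[
\lambda^{\det}_{\unr}(K^{\bg,\pitchfork}_\infty)=\min\Bigl\{r,\tfrac{\decay(\bg)-1}{2}\Bigr\}.
\]
Since this quantity depends on $\bg$ only via $\decay(\bg)$, Corollary~\ref{Principle_infty} with the anchored norm as reference carries the identity over to $\nn\in\{\text{S},\text{A}\}$. (Equivalently, one may apply Theorem~\ref{Theo_UB_PW} directly to each of the three norms, since its hypotheses hold for every pair of seminorms in Example~\ref{exa1} and the one-dimensional rate is $r$ for all three equivalent norms; the route via Corollary~\ref{Principle_infty} is more economical.)

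For the randomized identity no counterpart of Theorem~\ref{Theo_UB_PW} is at hand, so here I would take the ANOVA norm $\nn=\text{A}$ as the reference. For it the identity
\[
\lambda^{\ran}_{\unr}(K^{\bg,\text{A}}_\infty)=\min\Bigl\{r+\tfrac12,\tfrac{\decay(\bg)-1}{2}\Bigr\}
\]
is already established for all $r\in\N$ under the stated growth conditions on $\$$: the lower bound comes from randomized multivariate decomposition methods built from interlaced scrambled polynomial lattice rules, whose ANOVA-invariance makes the variance analysis tractable, while the matching upper bound (valid even for much more general randomized algorithms) rests on the one-dimensional randomized rate $r+\tfrac12$ on $W^{r,2}[0,1]$; see \cite[Cor.~5.3]{DG13} and \cite[Prop.~1(ii), Sec.~2.2.9]{Nov88}. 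As this right-hand side again depends on $\bg$ only through $\decay(\bg)$, Corollary~\ref{Principle_infty} with the ANOVA norm as reference yields the identity for $\nn\in\{\text{S},\pitchfork\}$.

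The conceptual point is that the favorable norm differs between the two settings — anchored for deterministic algorithms, where the anchored decomposition underlying the multivariate decomposition method of Theorem~\ref{Theo_UB_PW} is available, and ANOVA for randomized algorithms, where the variance splits along the ANOVA components. The only thing to verify for the transfer is the hypothesis of Corollary~\ref{Principle_infty}, that the reference exponent depends on $\bg$ solely via $\decay(\bg)$, which is evident from the explicit formulas. The main obstacle is that the randomized case cannot be handled internally as the deterministic one is: lacking a randomized analogue of Theorem~\ref{Theo_UB_PW}, one must import the full ANOVA result of \cite{DG13} rather than assemble it from a one-dimensional rate and a decay exponent, and then rely on the embedding machinery only for the transfer to the standard and anchored norms.
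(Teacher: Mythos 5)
Your proposal is correct and follows essentially the same route as the paper: take the deterministic anchored result and the randomized ANOVA result as the two reference cases, observe that the exponents depend on $\bg$ only via $\decay(\bg)$, and transfer to the remaining norms with Corollary~\ref{Principle_infty}. The only cosmetic difference is that you derive the deterministic anchored case from Theorem~\ref{Theo_UB_PW} (together with the univariate rate $r$) rather than citing it directly as known from \cite{PW11,KSWW10a}, which amounts to the same argument.
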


\begin{proof}
Since the result is true in the deterministic setting for 
$\nn=\ \pitchfork$ and in the randomized setting for $\nn={\rm A}$, and 
since $\lambda^{\det}_{\unr}(K^{\bg, \pitchfork}_\infty)$ and 
$\lambda^{\ran}_{\unr}(K^{\bg, {\rm A}}_\infty)$ depend on the 
weights $\bg$ only via $\decay(\bg)$, it remains to apply 
Corollary \ref{Principle_infty}.
\end{proof}

\begin{table}[ht]
\centering
\caption{Matching upper and lower bounds for 
$\lambda_{\rm unr}^{\rm set}$}
\label{my-label1}
\begin{tabular}{|c|c|c|c|c|c|}
\hline
\multicolumn{2}{|c|}{\multirow{2}{*}{${\rm cost}={\rm unr}$}} & \multicolumn{2}{c|}{$r=1$}                          & \multicolumn{2}{c|}{$r>1$}                          \\ \cline{3-6} 
\multicolumn{2}{|c|}{}                                        & $\det$               & $\ran$                       & $\det$               & $\ran$                       \\ \hline
\multirow{2}{*}{$*=\ \pitchfork$}        & lower bound        & \multicolumn{2}{c|}{\cite{PW11}}                    & \cite{PW11}          & new                          \\ \cline{2-6} 
                                         & upper bound        & \cite{KSWW10a}       & \cite{Gne12a}                & \cite{KSWW10a}       & \cite{Gne12a}                \\ \hline
\multirow{2}{*}{$*=\text{A}$}            & lower bound        & \multirow{2}{*}{new} & \multirow{2}{*}{\cite{DG13}} & \multirow{2}{*}{new} & \multirow{2}{*}{\cite{DG13}} \\ \cline{2-2}
                                         & upper bound        &                      &                              &                      &                              \\ \hline
\multirow{2}{*}{$*=\text{S}$}            & lower bound        & \multirow{2}{*}{new} & \cite{DG13}                  & \multirow{2}{*}{new} & \cite{DG13}                  \\ \cline{2-2} \cline{4-4} \cline{6-6} 
                                         & upper bound        &                      & new                          &                      & new                          \\ \hline
\end{tabular}
\end{table}

\begin{rem}
In the situation of Theorem~\ref{Unr_Sob} we may choose one sequence of 
algorithms that achieves the optimal convergence rate for all three 
cases $*\in\{ {\rm S}, {\rm A}, \pitchfork\}$ simultaneously.
More precisely,
there exists a sequence $(Q_n)_{n\in\N}$ of deterministic or
randomized algorithms, respectively, with
${{\rm unr}}(Q_n)\leq n$
with the following property for every 
$0<\lambda<\lambda^{\rm set}_{\unr}$.
There exists a constant $c>0$ such that
\[
e\left(Q_n, K^{\bg,*}_\infty\right)
\leq c\cdot n^{-\lambda}
\]
for all $*\in\{ {\rm S}, {\rm A}, \pitchfork\}$
and $n\in\N$.
Such a sequence can be obtained by the following procedure.
Choose any of the three norms, say, the anchored norm,
and let $c_0>0$ be the minimum of the value of 
the constant according to Theorem~\ref{lem1} for the two pairs 
$(\pitchfork, {\rm A})$ and $(\pitchfork, {\rm S})$.
Now, take a sequence of algorithms according to Theorem~\ref{Unr_Sob} 
for the sequence $c_0^{-1}\bg$ of weights and $*=\ \pitchfork$.
This sequence has the desired property, since
$H(K^{\bg,*}_\infty) \subseteq 
H(K^{c\cdot \bg,*}_\infty)$ for all $c\geq 1$ and  
$*\in\{ {\rm S}, {\rm A}, \pitchfork\}$.

A similar remark also applies to Theorems~\ref{Nest_Det}--\ref{ThmFix}.
\end{rem}

\begin{rem}\label{Rem_Ran_Det}
Due to Theorem \ref{Unr_Sob} 
randomized algorithms are superior to deterministic algorithms for
unrestricted subspace sampling, i.e., 
$\lambda^{{\rm ran}}_{\unr} > \lambda^{{\rm det}}_{\unr}$,
if and only if 
\[
\decay(\bg) > 2r + 1.
\]
Observe that the known results have only covered the case
$r=1$ and $*= \ \pitchfork$ and, partially, the case
$r\geq 2$ and $*= \ \pitchfork$.
\end{rem}

\subsubsection{Nested Subspace Sampling}

Here we start with the analysis of deterministic algorithms.
In the anchored case $*= \ \pitchfork$ the statement of the 
next Theorem was already known. 
More precisely,
the lower bound on $\lambda^{\det}_{\nes}(K^{\bg, \pitchfork}_\infty)$ 
was established in \cite[Sec.~5]{DG12} by using multilevel algorithms 
based on the higher-order polynomial lattice rules described 
in Remark \ref{Rem_Pol_Lat}. This lower bound improves on earlier 
results in \cite{Gne10, NHMR11}.
The upper bound on $\lambda^{\det}_{\nes}(K^{\bg,
\pitchfork}_\infty)$ can be derived easily from
\cite[Thm.~4]{NHMR11} and is explicitly stated in \cite[Cor.~4]{DG12}. 
For the other cases 
$* \in \{{\rm S}, {\rm A}\}$ the statement of the theorem is new.
We add that the result for the case $\nn = \ \pitchfork$ does not
only hold for the class of product weights but also for the larger
class of product and order dependent (POD) weights and for the
class of finite intersection weights, see \cite[Cor.~4 and~6]{DG12}.

\begin{theo}\label{Nest_Det}
Let $\sigma\ge (2r-1)/2r$ and let the cost function 
$\$$ satisfy $\$(\nu) = \Theta(\nu^\sigma)$.  
Moreover, let $H=W^{r,2}[0,1]$ and 
$*\in\{ {\rm S}, {\rm A}, \pitchfork\}$. Then we have
\begin{equation*}
\lambda^{\det}_{\nes}(K^{\bg,*}_\infty) = 
\min \left\{ r, \frac{\decay(\bg) -1}{2\sigma} \right\}.
\end{equation*}
\end{theo}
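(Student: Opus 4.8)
The plan is to reduce all three cases to the single anchored case $\nn = \pitchfork$, where the asserted identity is already available from the literature, and then to transfer it to $\nn \in \{{\rm S}, {\rm A}\}$ by means of the transfer principle in Corollary~\ref{Principle_infty}. This mirrors the strategy already used for Theorems~\ref{CorDet} and \ref{Unr_Sob}.

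First I would record the anchored identity
\[
\lambda^{\det}_{\nes}(K^{\bg,\pitchfork}_\infty) = \min\left\{ r, \frac{\decay(\bg)-1}{2\sigma}\right\}.
\]
The lower bound is furnished by the multilevel algorithms built on the higher-order polynomial lattice rules of \cite[Sec.~5]{DG12} (cf.\ Remark~\ref{Rem_Pol_Lat}), and the matching upper bound follows from \cite[Thm.~4]{NHMR11}, as made explicit in \cite[Cor.~4]{DG12}. At this step I would verify that these cited bounds hold for the cost function $\$(\nu) = \Theta(\nu^\sigma)$ under exactly the constraint $\sigma \ge (2r-1)/(2r)$ imposed in the statement, so that one and the same nested cost model is used throughout.

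The decisive structural point is that the right-hand side of the displayed formula depends on the weight sequence $\bg$ only through the scalar $\decay(\bg)$. Hence the hypothesis of Corollary~\ref{Principle_infty} is satisfied with ${\rm set} = \det$, ${\rm cost} = \nes$, and $\na = \pitchfork$. Applying that corollary with $\nb \in \{{\rm S}, {\rm A}\}$ then yields
\[
\lambda^{\det}_{\nes}(K^{\bg,*}_\infty) = \lambda^{\det}_{\nes}(K^{\bg,\pitchfork}_\infty)
\]
for every $* \in \{{\rm S}, {\rm A}\}$, which is precisely the assertion.

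I do not anticipate a genuine analytic obstacle in this transfer: the substantial work is already done, partly in the cited constructions for the anchored space and partly in the abstract embedding results of Section~\ref{SEC_EMB} underlying Corollary~\ref{Principle_infty}. The one point deserving real care is the compatibility of the cost-model hypotheses, namely ensuring that the cited anchored-case bounds and the transfer principle are applied with the same nested cost function, so that equating the two $\lambda$-values is legitimate.
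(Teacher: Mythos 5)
Your proposal is correct and follows essentially the same route as the paper: the anchored identity is taken from \cite[Sec.~5]{DG12} (lower bound via multilevel higher-order polynomial lattice rules) and \cite[Thm.~4]{NHMR11}, \cite[Cor.~4]{DG12} (upper bound), and the transfer to $\nn\in\{{\rm S},{\rm A}\}$ is exactly the application of Corollary~\ref{Principle_infty}, justified because $\lambda^{\det}_{\nes}$ depends on $\bg$ only via $\decay(\bg)$. Your added remark about checking that the cited anchored-case bounds are stated for the same nested cost model with $\$(\nu)=\Theta(\nu^\sigma)$ and $\sigma\ge(2r-1)/(2r)$ is a sensible point of care but does not change the argument.
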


\begin{proof}
Since the result is true for $\nn=\ \pitchfork$ and since
$\lambda^{\det}_{\nes}$ depends on the weights $\bg$ only via 
$\decay(\bg)$, the theorem follows from 
Corollary \ref{Principle_infty}.
\end{proof}

Now we consider randomized algorithms.
In the ANOVA case $\nn= {\rm A}$ the upper bound on
$\lambda^{\ran}_{\nes}(K^{\bg, {\rm A}}_\infty)$ was known and the
lower bound on $\lambda^{\ran}_{\nes}(K^{\bg, {\rm A}}_\infty)$ was
known for $r=1$, see \cite[Cor.~3.5 and~5.5]{BG12}. In the anchored
case only the upper bound on $\lambda^{\ran}_{\nes}(K^{\bg,
\pitchfork}_\infty)$ was known, see \cite[Sec.~3.2.1]{Gne12a}.
See also Table \ref{my-label2} for an overview of known and
new results with matching upper and lower bounds, covering both, 
deterministic and randomized algorithms.

\begin{theo}\label{Nest_Ran}
Let $\sigma\ge 2r/(2r+1)$ and let the cost function $\$$ satisfy 
$\$(\nu) = \Theta(\nu^\sigma)$. Moreover, let $H=W^{r,2}[0,1]$ and 
$\nn\in\{ {\rm S}, {\rm A}, \pitchfork\}$. Then we have
\begin{equation*}
\min \left\{ \max\left\{ r,\frac{3}{2}\right\}, 
\frac{\decay(\bg) -1}{2\sigma} \right\}
\leq
\lambda^{\ran}_{\nes}(K^{\bg,*}_\infty) \le 
\min \left\{ r + \frac{1}{2}, \frac{\decay(\bg) -1}{2\sigma} \right\}
\end{equation*}
with equality for $r=1$ or $\decay(\bg) \leq 2 \sigma r+1$.
\end{theo}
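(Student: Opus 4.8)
The plan is to establish both inequalities in the ANOVA setting $\nn={\rm A}$ and then to transport them to the other norms. Since both sides of the claimed estimate depend on $\bg$ only through $\decay(\bg)$, Corollary~\ref{Principle_infty} together with the remark following it reduces the theorem to the corresponding two-sided bound for $\lambda^{\ran}_{\nes}(K^{\bg,{\rm A}}_\infty)$; throughout I abbreviate $c=(\decay(\bg)-1)/(2\sigma)$.

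For the upper bound $\lambda^{\ran}_{\nes}(K^{\bg,{\rm A}}_\infty)\le\min\{r+\tfrac12,c\}$ I would invoke \cite[Cor.~3.5]{BG12}. Its first constituent $r+\tfrac12$ is the univariate randomized lower bound on $W^{r,2}[0,1]$ from \cite[Prop.~1(ii)]{Nov88}, obtained by restricting to integrands depending on the first coordinate only, where the nested cost of each evaluation is $\$(1)=\Theta(1)$; its second constituent $c$ is the standard information-based ceiling for infinite-dimensional integration in the nested cost model, valid for every $r$.

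For the lower bound I would distinguish two cases. If $r\ge2$, then $\max\{r,\tfrac32\}=r$, and the hypothesis $\sigma\ge2r/(2r+1)$ entails $\sigma\ge(2r-1)/(2r)$ (as $2r/(2r+1)>(2r-1)/(2r)$), so Theorem~\ref{Nest_Det} is applicable and gives $\lambda^{\det}_{\nes}(K^{\bg,{\rm A}}_\infty)=\min\{r,c\}$; since $\A^{\det}_\infty\subseteq\A^{\ran}_\infty$ implies $\lambda^{\ran}_{\nes}\ge\lambda^{\det}_{\nes}$, the bound $\min\{r,c\}=\min\{\max\{r,\tfrac32\},c\}$ follows. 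If $r=1$, then $\max\{r,\tfrac32\}=\tfrac32$ and the deterministic rate $r=1$ no longer suffices; here I would cite the randomized ANOVA construction of \cite[Cor.~5.5]{BG12}, which exploits the scrambling to reach $\min\{\tfrac32,c\}$.

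With both ANOVA bounds in hand---each depending on $\bg$ only through $\decay(\bg)$---Corollary~\ref{Principle_infty} yields the identical two-sided estimate for $\nn\in\{{\rm S},\pitchfork\}$, which proves the theorem. The equality assertions follow by inspection: for $r=1$ one has $\max\{r,\tfrac32\}=\tfrac32=r+\tfrac12$, and for $\decay(\bg)\le2\sigma r+1$ one has $c\le r\le\max\{r,\tfrac32\}\le r+\tfrac12$, so in both cases the two minima agree. The genuine obstacle is the complementary regime $r\ge2$, $\decay(\bg)>2\sigma r+1$: there the lower bound $r$ stems only from the deterministic algorithm, whereas the upper bound is $r+\tfrac12$. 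For $r=1$ the scrambling-based construction of \cite{BG12} recovers the full randomized gain of $\tfrac12$; it is not known how to propagate this gain through the multilevel scheme for $r\ge2$, so closing the $\tfrac12$-gap---equivalently, deciding whether randomization truly helps for $r\ge2$ in this regime---remains open.
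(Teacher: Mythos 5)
Your proof is correct, and its skeleton --- establish the two-sided bound in the ANOVA case and transport it to the other two norms via Corollary~\ref{Principle_infty}, since all bounds involved depend on $\bg$ only through $\decay(\bg)$ --- is exactly the paper's. The one place where you genuinely diverge is the lower bound for $r\ge 2$. You observe that $\max\{r,\tfrac32\}=r$ there, check that $2r/(2r+1)>(2r-1)/(2r)$ so that the hypothesis of Theorem~\ref{Nest_Det} is met, and conclude from $\lambda^{\ran}_{\nes}\ge\lambda^{\det}_{\nes}=\min\{r,(\decay(\bg)-1)/(2\sigma)\}$. The paper instead runs an additional embedding argument: starting from $\nn={\rm S}$ it notes that $H(K^{\bg,{\rm S}}_s)$ for smoothness $r$ embeds with norm one into the corresponding smoothness-$1$ space for every $s\in\N$, identifies the domains $\X^{\bg,\pitchfork}=\X^{\bg,{\rm A}}=\X^{\bg,{\rm S}}=D^{\N}$ via Remark~\ref{rem1} and Theorem~\ref{theo2}, and invokes \cite[Prop.~2]{GMR12} to get the norm-one embedding also for $s=\infty$; this imports the $r=1$ randomized lower bound $\min\{\tfrac32,(\decay(\bg)-1)/(2\sigma)\}$ into the $r\ge2$ setting, after which the paper adds the deterministic bound from Theorem~\ref{Nest_Det} exactly as you do. Since for integer $r\ge2$ the deterministic term already dominates the imported $\tfrac32$-term, your shorter route yields the identical stated bound; what the paper's extra step buys is the explicit (and reusable) fact that the higher-smoothness infinite-dimensional spaces embed with norm one into the smoothness-one ones, which is of independent interest but not needed for the inequality as stated. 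Your accounting of the upper bound (from \cite{BG12} in the ANOVA case, then transferred via Theorem~\ref{t3}), of the two equality cases, and of the open $\tfrac12$-gap in the regime $r\ge2$, $\decay(\bg)>2\sigma r+1$ all match the paper.
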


\begin{proof}
The upper bound is true for $\nn \in \{\pitchfork, {\rm A}\}$,
and it remains to apply Theorem \ref{t3} to also establish this bound
for $\nn = {\rm S}$. Starting with $\nn = {\rm A}$,
we establish the lower bound for $r=1$ in the same way. 

It remains to prove the lower bound for $r \geq 2$. Here we
start with $\nn = {\rm S}$, and
we use $H_s^{(r)}$ to denote the Hilbert space $H (K^{\bg,{\rm S}}_s)$ 
with its dependence on $r$. For $s \in \N$ the norm of the embedding of 
$H_s^{(r)}$ into $H_s^{(1)}$ obviously is one. 
To address the case $s=\infty$ we first observe that
$\X^{\bg,\pitchfork} = \X^{\bg, {\rm A}} = D^\N$, which
follows from Remark \ref{rem1} and the boundedness of the
corresponding kernels $k$.
Theorem \ref{theo2} implies that $\X^{\bg,{\rm S}} = D^\N$.
Use \cite[Prop.~2]{GMR12} to conclude that
$H_\infty^{(r)}$ is embedded into $H_\infty^{(1)}$ with norm
one as well.
Hence the lower bound for $r=1$ is also valid for $r \geq 2$.
This result is transferred to the case $\nn \in \{\pitchfork, {\rm
A}\}$ in the standard way.
Of course, the lower bound from Theorem \ref{Nest_Det}, which deals
with deterministic algorithms, is applicable, too.
\end{proof}

\begin{table}[ht]
\centering
\caption{Matching upper and lower bounds for 
$\lambda_{\rm nest}^{\rm set}$}
\label{my-label2}
\begin{tabular}{|c|c|c|c|c|c|}
\hline
\multicolumn{2}{|c|}{\multirow{2}{*}{${\rm cost}={\rm nest}$}} & \multicolumn{2}{c|}{$r=1$}                          & \multicolumn{2}{c|}{$r>1$}           \\ \cline{3-6} 
\multicolumn{2}{|c|}{}                                         & $\det$               & $\ran$                       & $\det$               & $\ran$        \\ \hline
\multirow{2}{*}{$*=\ \pitchfork$}         & lower bound        &
\cite{DG12}          & new                          & \cite{DG12}
& \multirow{2}{*}{open}          \\ \cline{2-5} 
                                          & upper bound        & \cite{NHMR11}        & \cite{Gne12a}                & \cite{NHMR11}        &  \\ \hline
\multirow{2}{*}{$*=\text{A}$}             & lower bound        &
\multirow{2}{*}{new} & \multirow{2}{*}{\cite{BG12}} &
\multirow{2}{*}{new} & \multirow{2}{*}{open}       \\ \cline{2-2}  
                                          & upper bound        &                      &                              &                      &    \\ \hline
\multirow{2}{*}{$*=\text{S}$}             & lower bound        &
\multirow{2}{*}{new} & \cite{BG12}                  &
\multirow{2}{*}{new} & \multirow{2}{*}{open}          \\ \cline{2-2} \cline{4-4} 
                                          & upper bound        &                      & new                          &                      &            \\ \hline
\end{tabular}
\end{table}

\begin{rem}
Relying on a modification of the component-by-component (cbc)
algorithm presented in \cite{BD11}, scrambled polynomial lattice 
rules were constructed in \cite[Section~5]{BG12} that were used as 
building blocks of multilevel algorithms to establish the lower 
bound on $\lambda^{\ran}_{\nes}$ for $r=1$ in the ANOVA setting. These 
scrambled polynomial lattice rules cannot exploit higher order smoothness. 
For $r\ge 2$ the authors believe that a matching lower bound for 
$\lambda^{\ran}_{\nes}(K^{\bg, {\rm A}}_\infty)$ can be established
with multilevel algorithms as in \cite{BG12} that use as building 
blocks the interlaced polynomial lattice rules from 
Remark~\ref{Interlaced}; a proof of this claim is nevertheless 
beyond the scope of the present paper. 

Notice that  the lower bound on $\lambda^{\ran}_{\nes}$ in the
anchored case is new and shows in particular that the corresponding
upper bound established in \cite[Sec.~3.2.1]{Gne12a} was already optimal. 
\end{rem}

\begin{rem}
To compare the power of deterministic and randomized algorithms
for nested subspace sampling we consider the condition
\begin{equation}\label{g88}
\decay(\bg) > 2\sigma r + 1,
\end{equation}
and we apply Theorems \ref{Nest_Det} and \ref{Nest_Ran}. For $r=1$ 
randomized algorithms are superior to deterministic algorithms,
i.e., $\lambda^{{\rm ran}}_{\nes} > \lambda^{{\rm det}}_{\nes}$,
if and only if \eqref{g88} holds true.
For $r \geq 2$ we only know that
\eqref{g88} is a necessary condition for
superiority.
\end{rem}

\subsubsection{Fixed Subspace Sampling in the Randomized 
Setting}\label{FSS}

Here we use the interlaced 
scrambled polynomial lattice rules  for multivariate 
integration described in Remark \ref{Interlaced} to improve and
generalize the results on fixed subspace sampling from 
\cite[Sec.~4.3]{HMNR10}. As in \cite{HMNR10} we focus on the randomized 
setting and the cost function $\$$ given by $\$(k)=k$ for all $k\in\N_0$.

\begin{theo}\label{ThmFix}
Let $H= W^{r,2}[0,1]$ and  
$*\in \{ \text{S}, \text{A}, \pitchfork \}$. Moreover,
let 
\[
\beta = \frac{1}{2} \min\{ \decay(\bg), 2r+1 \}.
\]
Then 
\begin{equation*}
\frac{\beta (\decay(\bg) -1)}{2\beta -1 + \decay(\bg)} \le 
\lambda_{{\rm fix}}^{{\rm ran}}(K^{\bg,*}_\infty)
\le \frac{(r+\frac{1}{2}) (\decay(\bg) -1)}{2r + \decay(\bg)}
\end{equation*}
with equality for $\decay(\bg) \ge 2r+1$.
\end{theo}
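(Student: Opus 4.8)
The plan is to prove both inequalities first for the ANOVA norm $\nn={\rm A}$, where the unbiased interlaced scrambled polynomial lattice rules of Remark~\ref{Interlaced} are at our disposal, and then to transfer the result to $\nn\in\{{\rm S},\pitchfork\}$. Since the claimed lower and upper bounds are functions of $\decay(\bg)$ and $r$ only, and $\decay(c\bg)=\decay(\bg)$, the transfer follows from Theorem~\ref{t3}: the minimal errors $e^{\rm ran}_{\rm fix}(n,K^{\bg,\na}_\infty)$ are squeezed between constant multiples of $e^{\rm ran}_{\rm fix}(n,K^{c_0^{\pm1}\bg,{\rm A}}_\infty)$, and as the ANOVA bounds depend only on the (rescaling-invariant) decay, they carry over verbatim, exactly as in the proofs of Theorems~\ref{Unr_Sob}--\ref{Nest_Ran}. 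The whole difficulty is thus concentrated in the case $\nn={\rm A}$, where I may assume after a permutation of coordinates that the weights are non-increasing, so that $\sum_{j>s}\gamma_j$ is comparable to $s^{-(\decay(\bg)-1)}$ up to subpolynomial factors.

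For the lower bound on $\lambda^{\rm ran}_{\rm fix}$ I would fix a dimension $s$ and apply the $s$-dimensional interlaced scrambled polynomial lattice rule $Q$ with $N=b^m$ points to the anchored restriction $f^{(s)}(x_1,\dots,x_s)=f(x_1,\dots,x_s,a,a,\dots)$; this lies in the fixed subspace model with ${\rm fix}(Q)=N\,\$(s)=Ns$. Because $Q$ is unbiased, the mean square error splits as $e(Q,f)^2=|I_\infty(f)-I_s(f^{(s)})|^2+\Var(Q(f^{(s)}))$. The variance is controlled by \eqref{ran_pol_latt} together with the uniform bound $\|f^{(s)}\|_{K^{\bg,{\rm A}}_s}\le\prod_{j>s}(1+\gamma_j k_{1,{\rm A}}(a,a))^{1/2}\,\|f\|_{K^{\bg,{\rm A}}_\infty}$ for the restriction operator (a tensor product of point evaluations, cf.\ \cite[Prop.~2]{GMR12}); for any $\tau<\beta=\min\{\decay(\bg)/2,r+\tfrac12\}$ the product $\prod_j(1+C\gamma_j^{1/(2\tau)})$ converges, so $\Var(Q(f^{(s)}))\lesssim N^{-2\tau}\|f\|^2$ uniformly in $s$. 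The truncation bias I would estimate by expanding in the ANOVA decomposition: only components $f_u$ with $\emptyset\neq u\subseteq\{s+1,s+2,\dots\}$ survive the difference, and Cauchy--Schwarz gives $|I_\infty(f)-I_s(f^{(s)})|\lesssim(\sum_{j>s}\gamma_j)^{1/2}\|f\|\lesssim s^{-(\decay(\bg)-1)/2+\eps}\|f\|$. Balancing $N^{-\tau}$ against $s^{-(\decay(\bg)-1)/2}$ subject to $Ns\le n$ yields the rate $n^{-\tau(\decay(\bg)-1)/(\decay(\bg)-1+2\tau)}$, and letting $\tau\uparrow\beta$ produces exactly the claimed lower bound.

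For the upper bound on $\lambda^{\rm ran}_{\rm fix}$ I would use two families of fooling functions against an arbitrary fixed subspace algorithm that uses dimension $s$ and $N$ points, so that $Ns\le n$. Functions of the single active variable $x_1$ reduce the task to one-dimensional randomized quadrature with $N$ nodes, whence the lower bound of \cite[Prop.~1(ii), Sec.~2.2.9]{Nov88} forces an error $\gtrsim N^{-(r+1/2)}$. For the truncation I would take $h=\sum_{j>s}\psi_j(x_j)$ with each $\psi_j$ vanishing at the anchor $a$; then $h$ is identically zero at every admissible sampling point $\bst\in\T_{1:s}$, so $Q(h)=0$ deterministically and $e(Q,h)=|I_\infty(h)|$, and optimizing $|I_\infty(h)|$ under $\|h\|_{K^{\bg,{\rm A}}_\infty}\le1$ gives $e(Q,h)\gtrsim(\sum_{j>s}\gamma_j)^{1/2}\gtrsim s^{-(\decay(\bg)-1)/2-\eps}$. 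Hence every admissible algorithm satisfies $e(Q,K^{\bg,{\rm A}}_\infty)\gtrsim\max\{N^{-(r+1/2)},\,s^{-(\decay(\bg)-1)/2}\}$, and minimizing this maximum over all $(N,s)$ with $Ns\le n$ (the optimum sits at the crossover point) yields the rate $n^{-(r+1/2)(\decay(\bg)-1)/(2r+\decay(\bg))}$, i.e.\ the asserted upper bound. Finally, when $\decay(\bg)\ge2r+1$ one has $\beta=r+\tfrac12$, so $2\beta-1+\decay(\bg)=2r+\decay(\bg)$ and the two bounds coincide, giving the claimed equality.

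The main obstacle I anticipate is the truncation analysis rather than the quadrature step. One must verify that the anchored restriction $f\mapsto f^{(s)}$ is uniformly bounded on the ANOVA space, so that the finite-dimensional variance estimate \eqref{ran_pol_latt} may be applied with a constant independent of $s$, and one must pin down the two-sided comparison between the tail weight sum $\sum_{j>s}\gamma_j$ and $s^{-(\decay(\bg)-1)}$ in the regime where the weights are only assumed summable. The latter is precisely where the non-increasing rearrangement and the definition of $\decay(\bg)$ via summability of $\gamma_j^{1/p}$ must be combined, and it is also what confines both bounds to the tail exponent $(\decay(\bg)-1)/2$.
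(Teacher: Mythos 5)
Your proposal is in substance the same proof as the paper's, except that the paper outsources both balancing arguments to the literature: the upper bound is obtained by feeding Novak's univariate randomized rate $r+\tfrac12$ into \cite[Thm.~2]{HMNR10}, and the lower bound by feeding the finite-dimensional rate $\beta$ from Theorem~\ref{CorRan} into \cite[Thm.~1]{HMNR10}; the transfer between the three norms is then done via Theorem~\ref{t3} exactly as you describe (the paper transfers only $\pitchfork,{\rm A}\to{\rm S}$, since the cited results cover both $\pitchfork$ and ${\rm A}$, while you transfer ${\rm A}\to\{{\rm S},\pitchfork\}$; both routes are legitimate). Your unbiasedness/bias--variance split, the boundedness of the anchored restriction on the ANOVA space, and the optimization over $Ns\le n$ are precisely the content of those two cited theorems, so the strategy is not genuinely different --- you are unpacking the black boxes the paper cites.

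One step is, however, stated in a form that is false as written and needs repair: in the upper-bound argument you assert $\bigl(\sum_{j>s}\gamma_j\bigr)^{1/2}\gtrsim s^{-(\decay(\bg)-1)/2-\eps}$ as if it held for every $s$. The definition of $\decay(\bg)$ gives only one-sided control of the tails: from $\sum_j\gamma_j^{1/p}=\infty$ for $p>\decay(\bg)$ one cannot conclude a pointwise lower bound of the form $\gamma_j\gtrsim j^{-p}$ (take $\gamma_j=(j\log j)^{-p}$), and an adversarial algorithm could in principle choose a truncation dimension $s$ at which the tail sum dips. What is true for non-increasing weights is that $\sum_{j>s}\gamma_j\ge s^{1-p-\delta}$ for \emph{infinitely many} $s$ (otherwise $\gamma_{2s}\le s^{-1}\sum_{j>s}\gamma_j$ would force $\sum_j\gamma_j^{1/p}<\infty$). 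This suffices: fix such an $s_0$, use monotonicity of the tail sums to handle every algorithm with truncation dimension $s<s_0$, use the univariate bound $N^{-(r+1/2)}$ with $N\le n/s_0$ for those with $s\ge s_0$, and choose $n$ as a function of $s_0$ at the crossover; this yields the claimed error lower bound along a subsequence of $n$, which is all that an upper bound on $\lambda_{\rm fix}^{\rm ran}$ as defined in \eqref{lambdacost} requires. This bookkeeping is exactly what is hidden in \cite[Thm.~2]{HMNR10}; you correctly flag the two-sided tail comparison as the main obstacle, but the resolution is the subsequence argument, not a pointwise bound.
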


\begin{proof}
Combine the well-known lower bound for the minimal error for 
univariate integration on $W^{r,2}[0,1]$ in the randomized setting, see 
\cite[Prop.~1(ii), Sec.~2.2.9]{Nov88}, and \cite[Thm.~2]{HMNR10} to 
derive the upper bound
for $\lambda_{{\rm fix}}^{{\rm ran}}(K^{\bg,*}_\infty)$.
Combine the lower bound from Theorem \ref{CorRan}
and \cite[Thm.~1]{HMNR10} to derive the lower bound
for $\lambda_{{\rm fix}}^{{\rm ran}}(K^{\bg,*}_\infty)$ in the
case $\nn \in \{\pitchfork,{\rm A}\}$, and thus also for $\nn =
{\rm S}$.
\end{proof}

\begin{rem}\label{RemFix}
We compare the lower bound from Theorem \ref{ThmFix} with the
result from \cite[Sec~4.3]{HMNR10}. Since in \cite[Sec.~4.3]{HMNR10} 
only the case $r=1$ is treated, let us confine ourselves to this case.
For $* = \text{A}$ our 
lower bound recovers the lower bound from
\cite[Cor.~3.1]{Bal12}, which relies on scrambled polynomial lattice
rules analyzed in \cite{BD11} and which improved on the bound from
\cite[Cor.~1]{HMNR10}. The latter relies on scrambled Niederreiter 
$(t,m,s)$-nets analyzed in \cite{YH05}. 
In the case $*= \ \pitchfork$ our lower bound improves substantially 
on the lower bound 
\begin{equation*}
\frac{\decay(\bg) - 1}{2\decay(\bg)} \le \lambda_{{\rm fix}}^{\ran}
\end{equation*}
from \cite[Cor.~2]{HMNR10}.
It also improves on the better lower bound
\begin{equation}\label{ksww10}
\lambda_{{\rm fix}}^{\ran} \ge \begin{cases}
\, \decay(\bg)(\decay(\bg) -1)/(4\decay(\bg) - 2),
\hspace{2ex}&\text{if $1 < \decay(\bg) < 2$},\\
\, (\decay(\bg) -1)/(\decay(\bg) + 1),
\hspace{2ex} &\text{if $\decay(\bg) \ge 2$.}
\end{cases}
\end{equation}
that is mentioned in \cite[Rem.~4]{HMNR10} and that was achieved in  
\cite{KSWW10a} with the help of deterministic algorithms. More
precisely, for $1< \decay(\bg) \le 2$ both lower bounds coincide,
but for $\decay(\bg) > 2$ our lower bound is strictly better than
(\ref{ksww10}). This settles the open problem at the end 
of \cite[Rem.~4]{HMNR10}, where the authors asked whether 
(\ref{ksww10}) can be improved if one uses randomized algorithms 
different to classical Monte Carlo algorithms.
\end{rem}

\subsection{Tensor Products of Weighted Korobov Spaces}

Finally, we turn to spaces of periodic functions.
The weighted Korobov spaces $H(K^\bg_s)$ are tensor products of 
the periodic Sobolev spaces $H^r[0,1]$ of smoothness $r > 1/2$, see 
Example \ref{e33}. Tractability results for integration on 
$H(K^\bg_s)$ with $s \in \N$ have been established, e.g.,
in \cite{HW00,KPW14,SW01}. Here we consider the case $s = \infty$
of $\infty$-fold weighted tensor products and 
unrestricted subspace sampling.

For the periodic Sobolev spaces we have
$k_\gamma = \gamma \cdot k_1$ with $k_1$ not being an anchored
kernel, so that the results from \cite{KSWW10a,PW11}
on the multivariate decomposition method are not directly applicable.
Instead, one may study embeddings of the weighted
Korobov spaces $H(K^\bg_\infty)$ into $\infty$-fold weighted
tensor products of the non-periodic Sobolev spaces $W^{r,2}[0,1]$
(with fractional smoothness). More conveniently,
Theorem \ref{Theo_UB_PW} immediately yields the following 
result.

\begin{theo}
If the cost function $\$$ satisfies $\$(\nu) = \Omega(\nu)$ and 
$\$(\nu) = O(e^{\sigma \nu})$ for some $\sigma\in (0,\infty)$,
then we have
\begin{equation*}
\lambda^{{\rm det}}_{\unr}(K^{\bg}_\infty) =
\min \left\{ r,\, \frac{{\rm decay}(\bg) - 1}{2} \right\}.
\end{equation*}
\end{theo}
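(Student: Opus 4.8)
The plan is to derive the statement directly from Theorem~\ref{Theo_UB_PW}, whose only hypotheses are that $H$ carries a pair of seminorms satisfying \ref{a2} and \ref{a3}; crucially, no anchoring of $k_1$ is required, since the anchored kernel needed for the multivariate decomposition method is manufactured \emph{inside} the proof of Theorem~\ref{Theo_UB_PW} through the point-evaluation functional and the transfer principle of Corollary~\ref{Principle_infty}. First I would record that we are in the situation of Example~\ref{e33} with $H=H^r[0,1]$ and the canonical seminorms $\|f\|_1=|\hat f(0)|$ and $\|f\|_2^2=\sum_{h\neq 0}|\hat f(h)|^2\omega_h$, where $\omega_h=\max\{1,|h|^{2r}\}$; these satisfy \ref{a1}--\ref{a3} and fit Remark~\ref{rem1}, so Theorem~\ref{Theo_UB_PW} applies and yields
\[
\lambda^{{\rm det}}_{\unr}(K^{\bg}_\infty) =
\min \Bigl\{ \lambda^{{\rm det}}_{\std}(1+k_1),\,
\frac{\decay(\bg) - 1}{2} \Bigr\}.
\]
It therefore remains only to identify the univariate exponent $\lambda^{{\rm det}}_{\std}(1+k_1)=r$, which is the rate of optimal deterministic integration on the periodic Sobolev space $H^r[0,1]$ and is insensitive to the choice among equivalent norms on $H$.

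For the lower bound $\lambda^{{\rm det}}_{\std}(1+k_1)\ge r$ I would exhibit explicit good algorithms, namely the equispaced rectangle rule $Q_n(f)=\frac{1}{n}\sum_{i=0}^{n-1} f(i/n)$, which has cost $n$ and whose error on periodic $f$ equals $\sum_{k\neq 0}\hat f(kn)$. A Cauchy--Schwarz estimate then gives
\[
\Bigl|Q_n(f)-\hat f(0)\Bigr|
\le \|f\|_H\Bigl(\sum_{k\neq 0}\omega_{kn}^{-1}\Bigr)^{1/2}
= \|f\|_H\,(2\zeta(2r))^{1/2}\,n^{-r},
\]
where convergence of $\zeta(2r)$ uses $r>1/2$; hence the worst-case error is $O(n^{-r})$. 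For the matching upper bound $\lambda^{{\rm det}}_{\std}(1+k_1)\le r$ I would invoke the classical lower bound for univariate integration on Sobolev classes of smoothness $r$ in the deterministic setting (in the spirit of \cite[Prop.~1, Sec.~1.3.12]{Nov88} used for the non-periodic case), which forces the $n$th minimal error to be $\Omega(n^{-r})$; equivalently, one constructs periodic bump functions of unit norm supported between consecutive knots whose integral is of order $n^{-r}$. Combining the two bounds gives $\lambda^{{\rm det}}_{\std}(1+k_1)=r$.

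Substituting this value into the displayed identity yields $\lambda^{{\rm det}}_{\unr}(K^{\bg}_\infty)=\min\{r,(\decay(\bg)-1)/2\}$, as claimed. I expect no serious obstacle: the genuinely difficult content --- the two-sided bound on $\lambda^{{\rm det}}_{\unr}$ in terms of the univariate rate and the decay of the weights, valid for arbitrary (in particular non-anchored) kernels --- is already encapsulated in Theorem~\ref{Theo_UB_PW}. The only points deserving a moment's care are confirming that Theorem~\ref{Theo_UB_PW} is indeed applicable even though $k_1$ is not anchored, and checking that the univariate rate does not depend on which equivalent norm we place on $H$, so that $\lambda^{{\rm det}}_{\std}(1+k_1)$ genuinely coincides with the smoothness parameter $r$.
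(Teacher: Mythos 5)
Your argument is correct and follows exactly the paper's route: the paper proves this theorem with the single observation that Theorem~\ref{Theo_UB_PW} applies verbatim to the Korobov setting of Example~\ref{e33} (no anchoring of $k_1$ being required), leaving the classical identification $\lambda^{\rm det}_{\std}(1+k_1)=r$ for the univariate periodic Sobolev space implicit. Your aliasing computation for the rectangle rule and the bump-function lower bound correctly supply that univariate ingredient, so the two proofs coincide in substance.
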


\section{Appendix}

\subsection{A Dense Subspace of $H(K)$}

Consider a sequence $(k_j)_{j \in \N}$ of reproducing kernels on $D
\times D$ for some set $D\neq\emptyset$ such that
\begin{align*}
H(1) \cap H(k_j) =\{0\}
\end{align*}
for every $j \in \N$.
For $s\in\N$ we define the reproducing kernel $K_s$ on $D^s\times D^s$ by
\begin{align*}
K_s(\bx,\by) = \prod_{j= 1}^s (1 + k_j(x_j,y_j)),
\end{align*}
where $\bx, \by\in D^{s}$.
In the sequel we assume that
\begin{align*}
\X
=\{\bx\in D^\N \mid 
\prod_{j=1}^\infty(1+k_j(x_j,x_j))<\infty\}\neq\emptyset,
\end{align*}
and we define the reproducing kernel $K$ on $\X\times\X$ by
\begin{align*}
K(\bx,\by) = \prod_{j=1}^\infty (1+k_{j}(x_j, y_j)),
\end{align*}
where $\bx,\by \in\X$.

The following lemma and its proof are almost identical to 
\cite[Prop.\ 2.18]{HEF14}.
Recall the definition of $\psi_s^\X$ in \eqref{g8}.

\begin{lemma}\label{lemapp}
The mapping $\psi_s^{\X}$ is a linear isometry from 
$H(K_s)$ into $H(K)$, and $\bigcup_{s\in \N}\psi_s^{\X}(H(K_s))$ 
is a dense subspace of $H(K)$.
\end{lemma}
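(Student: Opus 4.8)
The plan is to exhibit explicit orthonormal bases of $H(K_s)$ and of $H(K)$ that are compatible with $\psi_s^{\X}$, so that both assertions reduce to bookkeeping once the bases are in place.

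First I would fix, for each $j \in \N$, an orthonormal basis $(e_{j,i})_{i \geq 1}$ of $H(k_j)$ and put $e_{j,0} = 1$. Since $H(1) \cap H(k_j) = \{0\}$, the kernel $1 + k_j$ corresponds to the orthogonal direct sum $H(1+k_j) = H(1) \oplus H(k_j)$, and because $\|1\|_{H(1)} = 1$ the family $(e_{j,i})_{i \geq 0}$ is an orthonormal basis of $H(1+k_j)$ with Mercer expansion $1 + k_j(x,y) = \sum_{i \geq 0} e_{j,i}(x) e_{j,i}(y)$. As $H(K_s)$ is the tensor product of the spaces $H(1+k_j)$ for $1 \leq j \leq s$, the tensor products $e^{(s)}_{\boldsymbol i} = \bigotimes_{j=1}^s e_{j,i_j}$, indexed by $\boldsymbol i = (i_1,\dots,i_s)$, form an orthonormal basis of $H(K_s)$.

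The heart of the argument, and the step I expect to be the main obstacle, is to produce the matching basis of $H(K)$. For a multi-index $\boldsymbol i = (i_j)_{j \in \N}$ with finite support I would define $E_{\boldsymbol i} \colon \X \to \R$ by $E_{\boldsymbol i}(\bx) = \prod_{j=1}^\infty e_{j,i_j}(x_j)$; since $e_{j,0} = 1$ this is actually a finite product, hence well defined. I would then verify the kernel expansion $K(\bx,\by) = \sum_{\boldsymbol i} E_{\boldsymbol i}(\bx) E_{\boldsymbol i}(\by)$ by expanding the infinite product $\prod_j (1 + k_j(x_j,y_j))$ into the product of the Mercer sums and using the distributive law. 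The convergence that legitimizes this rearrangement is exactly where $\bx, \by \in \X$ enters: for such points $\sum_{\boldsymbol i} E_{\boldsymbol i}(\bx)^2 = \prod_j (1 + k_j(x_j,x_j)) = K(\bx,\bx) < \infty$, and Cauchy--Schwarz then yields absolute convergence of $\sum_{\boldsymbol i} E_{\boldsymbol i}(\bx) E_{\boldsymbol i}(\by)$ together with its value $K(\bx,\by)$. The $E_{\boldsymbol i}$ are linearly independent, being products of the linearly independent $e_{j,i}$ across distinct coordinates, so by the standard characterization of a reproducing kernel through such an expansion they constitute an orthonormal basis of $H(K)$.

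With both bases in hand the two claims follow at once. Because $e_{j,0} = 1$, one checks directly that $\psi_s^{\X}(e^{(s)}_{\boldsymbol i}) = E_{(i_1,\dots,i_s,0,0,\dots)}$; thus $\psi_s^{\X}$ is linear and sends each basis vector of $H(K_s)$ to a distinct unit vector of the orthonormal basis of $H(K)$, so it is a linear isometry onto its closed image, which settles the first assertion. For the second, every finitely supported multi-index is supported in $\{1,\dots,s\}$ for some $s$, so $\bigcup_{s \in \N} \psi_s^{\X}(H(K_s))$ contains every $E_{\boldsymbol j}$ and hence the entire orthonormal basis of $H(K)$; its closed linear span is therefore all of $H(K)$, which proves density.
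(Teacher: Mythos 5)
Your strategy---realizing $H(K)$ as an incomplete infinite tensor product by exhibiting an explicit orthonormal system $(E_{\boldsymbol i})$ compatible with $\psi_s^{\X}$---is a genuinely different route from the paper's, and it is a natural one. But it has a gap exactly at the step you yourself flag as the heart of the argument. From the pointwise expansion $K(\bx,\by)=\sum_{\boldsymbol i}E_{\boldsymbol i}(\bx)E_{\boldsymbol i}(\by)$ the standard characterization gives only that the $E_{\boldsymbol i}$ form a \emph{Parseval frame} of $H(K)$, i.e.\ that $H(K)$ is the image of $\ell^2$ under $(a_{\boldsymbol i})\mapsto \sum a_{\boldsymbol i}E_{\boldsymbol i}$ with the quotient norm. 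They form an orthonormal basis if and only if this synthesis map is injective, i.e.\ if no nontrivial \emph{square-summable} combination $\sum a_{\boldsymbol i}E_{\boldsymbol i}$ vanishes identically on $\X$. Finite linear independence, which is all you invoke, is strictly weaker: for instance, on $\N$ the functions $\phi_0(m)=2^{-m}$ and $\phi_n=e_n$ ($n\ge 1$) are finitely independent, satisfy $\phi_0=\sum_n 2^{-n}\phi_n$ with $\ell^2$ coefficients, and hence are a Parseval frame but not an orthonormal basis of $H\bigl(\sum_i\phi_i\otimes\phi_i\bigr)$. Without $\ell^2$-independence your argument yields only $\|\psi_s^{\X}f\|_{K}\le\|f\|_{K_s}$, not the isometry. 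Establishing $\ell^2$-independence of the $E_{\boldsymbol i}$ here is essentially the whole difficulty of identifying $H(K)$ with the infinite tensor product: the natural attempt (slice off the first $s$ coordinates, use that $(e^{(s)}_{\boldsymbol i'})$ is an orthonormal basis of $H(K_s)$, and pass to the tail) reproduces the same problem for the tail kernel and needs a genuine limiting argument to close. A second, minor point: indexing the bases by $i\ge 1$ tacitly assumes each $H(k_j)$ is separable, which the appendix does not. Note that the density half of your proof does survive, since the closed span of a Parseval frame is all of $H(K)$.

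The paper avoids this issue entirely. It factors $K=K_s\otimes K^s$ over $\X=D^s\times R_s$ and proves, via the explicit Cauchy sequence $f_s(\bx)=K^s((x_{s+1},\dots),(y_{s+1},\dots))$, that the constant function $1$ lies in $H(K^s)$ with $\|1\|_{K^s}=1$; the isometry of $\psi_s^{\X}f=f\otimes 1$ then follows from the (finite, two-factor) tensor product structure without ever choosing a basis. Density is obtained by computing $\|K(\cdot,\by)-\psi_s^{\X}K_s(\cdot,(y_1,\dots,y_s))\|_K^2=K(\by,\by)-K_s((y_1,\dots,y_s),(y_1,\dots,y_s))\to 0$. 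If you want to keep your basis approach, you must either prove the $\ell^2$-independence of the $E_{\boldsymbol i}$ directly or import the norm-one constant function argument, at which point you are close to the paper's proof anyway.
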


\begin{proof}
First we show that
\begin{equation}\label{onein}
1\in H(K)
\end{equation}
with
\begin{equation}\label{oneone}
\|1\|_{K}=1.
\end{equation}
Note that
\begin{align*}
\X
=D^s\times R_s,
\end{align*}
where
\begin{align*}
R_s
=\{\bx\in D^{\{s+1,s+2,\dots\}} \mid 
\prod_{j=s+1}^\infty(1+k_j(x_j,x_j))<\infty\}.
\end{align*}
Consider the reproducing kernel $K^s$ on $R_s \times R_s$, given by
\[
K^s(\bx,\by)=\prod_{j=s+1}^\infty(1+k_j(x_j,y_j))
\]
for $\bx,\by \in R_s$. Obviously,
\[
K=K_s\otimes K^s,
\]
and we have $1\in H(K_s)$ with $\|1\|_{K_s}=1$.
Fix $\by\in \X$, and define $f_s : \X \to \R$ by
\[
f_s(\bx)=K^s((x_{s+1},x_{s+2},\dots),(y_{s+1},y_{s+2},\dots))
\]
for $\bx\in \X$. It follows that
$\lim_{s \to \infty} f_s(\bx)=1$ for all $\bx\in\X$.
Furthermore, $f_s\in H(K)$ with
\[
\|f_s\|_{K}^2
=\prod_{j=s+1}^\infty(1+k_j(y_j,y_j)),
\]
so that $\lim_{s \to \infty} \|f_s\|_{K} = 1$.
Similarly, we obtain
\begin{align*}
\langle f_{s_1},f_{s_2}\rangle_{K}
=\prod_{j=s_2+1}^\infty(1+k_j(y_j,y_j))
=\|f_{s_2}\|_{K}^2
\end{align*}
for $1\leq s_1\leq s_2$, which yields
\begin{align*}
\|f_{s_1}-f_{s_2}\|_{K}^2
=\|f_{s_1}\|_{K}^2+\|f_{s_2}\|_{K}^2-2
\langle f_{s_1},f_{s_2}\rangle_{K}
=\|f_{s_1}\|_{K}^2-\|f_{s_2}\|_{K}^2.
\end{align*}
Therefore $(f_s)_{s\in\N}$ is a
Cauchy sequence in $H(K)$, and 
\eqref{onein} as well as \eqref{oneone} follow.

We apply \eqref{onein} and \eqref{oneone} with $K^s$ instead of $K$
to derive $1 \in H(K^s)$ as well as $\|1\|_{K^s} = 1$. 
This yields $\psi_s^\X f = f \otimes 1 \in H(K)$ and
$\|\psi_s^\X f\|_K = \|f\|_{K_s}$
for $f \in H(K_s)$.

To establish the second part of the claim it suffices to show that
$K(\cdot,\by)$ belongs to the closure of
$\bigcup_{s\in \N}\psi_s^{\X}(H(K_s))$
for every $\by \in \X$.
Let $s \in \N$. Use the first part of the claim to
verify 
\[
\|K(\cdot,\by) - \psi^\X_s K_s (\cdot, (y_1,\dots,y_s))\|_K^2 =
K(\by,\by) - K_s((y_1,\dots,y_s),(y_1,\dots,y_s)).
\]
Since $\lim_{s \to \infty} K_s((y_1,\dots,y_s),(y_1,\dots,y_s))
= K(\by,\by)$, the statement follows.
\end{proof}

\subsection{Embeddings into $L^1$-spaces}

This section is based on \cite[Lem.~2.6 and Rem.~2.7]{HEF14}. 
A similar result, with a suboptimal constant, is presented in 
\cite[Lem.~7]{GMR12}.
The proof in the latter reference 
uses the Little Grothendieck Theorem. 
Here we give an elementary proof. The technique used in our proof is 
well-known and for instance used in the Malliavin calculus, 
and it is also a small part of a proof of the Little Grothendieck 
Theorem itself.

In this section we fix $s\in\N$.
For $j\in 1:s$
let $k_j$ be a reproducing kernel on $D_j\times D_j$
for some set $D_j\neq \emptyset$.
Furthermore assume that $\rho_j$ is a probability measure
(on a given $\sigma$-algebra) on $D_j$ such that
\begin{align*}
H(k_j) \subseteq L^1(D_j,\rho_j).
\end{align*}
Denote by $i_j$ the corresponding embedding. The closed graph theorem 
implies that $i_j$ is continuous.

Define the set
\begin{align*}
   D^{(s)}
=D_1\times\dots\times D_s
\end{align*}
as well as the product $\rho^{(s)}$ of the probability measures 
$\rho_1,\dots,\rho_s$.
We define the reproducing kernel $K_s$
on $D^{(s)}\times D^{(s)}$ by
\begin{align*}
K_s(\bx,\by) = \prod_{j= 1}^s
k_j(x_j,y_j), 
\end{align*}
where $\bx, \by\in D^{(s)}$.

\begin{lemma}\label{l100}
We have
\begin{align}\label{eqapp1}
H(K_s)
\subseteq L^1(D^{(s)},\rho^{(s)})
\end{align}
as well as
\begin{align}\label{eqapp2}
\|i\|
\leq
(\pi/2)^{(s-1)/2}
\prod_{j=1}^s\|i_j\|,
\end{align}
where $i$ denotes the embedding corresponding to \eqref{eqapp1}.
\end{lemma}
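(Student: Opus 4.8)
The plan is to prove both \eqref{eqapp1} and \eqref{eqapp2} simultaneously by induction on $s$, peeling off one coordinate at each step and paying exactly one factor $\sqrt{\pi/2}$ per peeled coordinate. The device that produces this factor — the ``Malliavin'' trick referred to in the introductory paragraph — is the elementary Gaussian identity: if $Z$ is centered Gaussian with variance $\sigma^2$, then $\EE|Z| = \sigma\sqrt{2/\pi}$, equivalently $\sigma = \sqrt{\pi/2}\,\EE|Z|$. This lets me trade an $H(k_s)$-norm for a first absolute moment of a linear Gaussian functional. The base case $s=1$ is nothing but the definition of $\|i_1\|$, since $(\pi/2)^{0}=1$ and $\int_{D_1}|f|\,\mathrm{d}\rho_1 \le \|i_1\|\,\|f\|_{k_1}$.

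For the inductive step I would write $D^{(s)}=D^{(s-1)}\times D_s$, $K_s = K_{s-1}\otimes k_s$, and hence $H(K_s)=H(K_{s-1})\otimes H(k_s)$. Fixing an orthonormal basis $(b_n)$ of $H(k_s)$, I expand $f=\sum_n a_n\otimes b_n$ with $a_n\in H(K_{s-1})$ and $\sum_n\|a_n\|_{K_{s-1}}^2=\|f\|_{K_s}^2$. Integrating first in the last variable and using Tonelli, for fixed $\bx'\in D^{(s-1)}$ the slice $f(\bx',\cdot)$ lies in $H(k_s)$, so $\int_{D_s}|f(\bx',\cdot)|\,\mathrm{d}\rho_s \le \|i_s\|\,\|f(\bx',\cdot)\|_{k_s}$. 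Now I linearize: on an auxiliary probability space I introduce the isonormal Gaussian family $(Z_v)_{v\in H(k_s)}$, so that each $Z_v$ is centered Gaussian with $\EE[Z_v Z_w]=\langle v,w\rangle_{k_s}$ and the $Z_{b_n}$ are i.i.d.\ standard normal. Writing $F(\bx',\omega)=Z_{f(\bx',\cdot)}(\omega)=\sum_n a_n(\bx')\,Z_{b_n}(\omega)$, the identity above gives $\|f(\bx',\cdot)\|_{k_s}=\sqrt{\pi/2}\,\EE|F(\bx',\cdot)|$, and Tonelli yields
\[
\int_{D^{(s-1)}}\|f(\bx',\cdot)\|_{k_s}\,\mathrm{d}\rho^{(s-1)}
= \sqrt{\tfrac{\pi}{2}}\;\EE\!\left[\int_{D^{(s-1)}}|F(\cdot,\omega)|\,\mathrm{d}\rho^{(s-1)}\right].
\]

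To finish I would bound the inner integral by the inductive hypothesis. For almost every fixed $\omega$ the function $F(\cdot,\omega)=\sum_n Z_{b_n}(\omega)\,a_n$ is a genuine element of $H(K_{s-1})$, and \eqref{eqapp2} for $s-1$ gives $\int_{D^{(s-1)}}|F(\cdot,\omega)|\,\mathrm{d}\rho^{(s-1)} \le (\pi/2)^{(s-2)/2}\prod_{j=1}^{s-1}\|i_j\|\cdot\|F(\cdot,\omega)\|_{K_{s-1}}$. A single application of Cauchy--Schwarz in $\omega$ then controls the remaining expectation: $\EE\|F(\cdot,\omega)\|_{K_{s-1}} \le (\EE\|F(\cdot,\omega)\|_{K_{s-1}}^2)^{1/2} = (\sum_n\|a_n\|_{K_{s-1}}^2)^{1/2} = \|f\|_{K_s}$. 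Collecting the three factors $\|i_s\|$, $\sqrt{\pi/2}\cdot(\pi/2)^{(s-2)/2}=(\pi/2)^{(s-1)/2}$, and $\prod_{j=1}^{s-1}\|i_j\|$ gives precisely \eqref{eqapp2}; finiteness of the resulting integral delivers \eqref{eqapp1}.

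The main obstacle I anticipate is not the analytic core but the measure-theoretic bookkeeping attached to the Gaussian family: I must justify joint measurability of $(\bx',\omega)\mapsto F(\bx',\omega)$ so that Tonelli legitimately applies, and the almost-sure convergence in $H(K_{s-1})$ of the series $\sum_n Z_{b_n}(\omega)\,a_n$ so that the inductive hypothesis is applicable to $F(\cdot,\omega)$. Both follow from $\sum_n\|a_n\|_{K_{s-1}}^2=\|f\|_{K_s}^2<\infty$ (whence $\sum_n Z_{b_n}(\omega)^2\|a_n\|_{K_{s-1}}^2<\infty$ a.s., its expectation being finite) together with the fact, contained in the inductive assumption $H(K_{s-1})\subseteq L^1(D^{(s-1)},\rho^{(s-1)})$, that elements of $H(K_{s-1})$ are $\rho^{(s-1)}$-measurable; these are the points that need to be argued carefully rather than asserted.
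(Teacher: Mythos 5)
Your argument is correct and is essentially the paper's proof: the same Gaussian linearization $(\sum_n c_n^2)^{1/2}=\sqrt{\pi/2}\;\EE\bigl|\sum_n c_nX_n\bigr|$ combined with Fubini--Tonelli, the univariate embedding norms, and a final Cauchy--Schwarz over the Gaussian variables, merely organized as an induction peeling off the last coordinate (with the orthonormal system placed on that factor) rather than the paper's reduction to $s=2$ applied to finite-rank tensors $\sum_{n=1}^m f_n\otimes g_n$ with the orthonormal system on the first factor, followed by density. The one point to tighten is your justification of the a.s.\ convergence of $\sum_n Z_{b_n}(\omega)\,a_n$ in $H(K_{s-1})$: the coefficients $a_n$ need not be orthogonal, so $\sum_n Z_{b_n}(\omega)^2\|a_n\|_{K_{s-1}}^2<\infty$ alone does not make the partial sums Cauchy; you should instead invoke the a.s.\ convergence of sums of independent mean-zero Hilbert-space-valued random elements with $\sum_n\EE\|Z_{b_n}a_n\|_{K_{s-1}}^2<\infty$, or simply prove the estimate for finite sums and conclude by density, as the paper does.
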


\begin{proof}
We proceed inductively, and here it suffices to consider
the case $s=2$.
Let $f_1,\dots,f_m\in H(k_1)$ be orthonormal,
$g_1,\dots,g_m\in H(k_2)$ be arbitrary,
and $X_1,\dots,X_m$ be independent
standard normally distributed random variables.
Set 
\[
c=\sqrt{{\pi}/{2}}\cdot \|i_1\|\|i_2\|
\]
and $h=\sum_{n=1}^m f_i\otimes g_i$.
We get
\begin{align*}
\int_{D_1 \times D_2} \left|h\right|\,\mathrm d \rho^{(2)}
& \leq \|i_1\|
\int_{D_2} \Bigl\|\sum_{n=1}^m f_n\,g_n(y)
\Bigr\|_{k_1}\rho_2(\mathrm dy)\\
& =\|i_1\|\int_{D_2}\Bigl(\sum_{n=1}^m 
g_n(y)^2\Bigr)^{1/2}\,\mathrm \rho_2(\mathrm dy)\\
&= \sqrt{\pi/2} \cdot \|i_1\|  
\int_{D_2} E\Bigl|\sum_{n=1}^m X_ng_n(y)\Bigr|\,\rho_2(\mathrm dy)\\
&= \sqrt{\pi/2} \cdot \|i_1\| \cdot
E\Bigl\|\sum_{n=1}^m X_ng_n\Bigr\|_{L^1(\rho_2)},
\end{align*}
and hence
\begin{align*}
\int_{D_1 \times D_2} \left|h\right|\,\mathrm d \rho^{(2)}
&\leq c \cdot E\Bigl\|\sum_{n=1}^m X_ng_n\Bigr\|_{k_2}
\leq c \biggl( E\Bigl\|\sum_{n=1}^m X_ng_n\Bigr\|_{k_2}^2
\biggr)^{1/2}\\
&= c \Bigl(\sum_{n=1}^m\|g_n\|_{k_2}^2\Bigr)^{1/2}
=c\left\|h\right\|_{K_s},
\end{align*}
which shows the claim.
\end{proof}

\begin{rem}
In the following we show that the constant $(\pi/2)^{(s-1)/2}$,
appearing in \eqref{eqapp2}, is optimal.
Let $m\in\N$ and $j\in 1:s$.
Let $\rho_j$ denote the standard normal distribution on $D_j =
\R^m$,
and let the reproducing kernel $k_j$ be such that
$H(k_j)$ is the dual space of $\R^m$
equipped with the Euclidean norm.
Furthermore, let $X$ denote a standard normally distributed
random variable.
Since $E(\lvert X\rvert)=\sqrt{{2}/{\pi}}$,
we get
\begin{align*}
\int_{D_j}  \left|f\right|\,\mathrm d\rho_j
=\sqrt{2/\pi}\cdot \|f\|_{k_j}
\end{align*}
for all $f\in H(k_j)$, and in particular,
\begin{align*}
\|i_j\|
=\sqrt{2/\pi}.
\end{align*}
Denote by $X_i\in H(k_j)$ the $i$th projection, i.e.,
\begin{align*}
X_i(\bx)
=x_i,
\end{align*}
for $\bx\in D_j$ and for $i=1,\dots,m$.
Note that $X_1,\dots, X_m\in H(k_j)$ are orthonormal.
Consider
\begin{align*}
h
=\sum_{n=1}^m X_n^{\otimes s},
\end{align*}
where $X_n^{\otimes s}$ denotes
the $s$-fold tensor product of $X_n$ with itself.
Note that
\begin{align*}
\left\|h/\sqrt{m}\right\|_{K_s}
=1,
\end{align*}
since $X_1^{\otimes s},\dots, X_m^{\otimes s}\in H(K_s)$ are orthonormal. 
The central limit theorem and a standard argument for convergence in 
distribution imply
\begin{align*}
\liminf_{m\to\infty}\int_{D^{(s)}} \left|h/\sqrt{m}\right|
\,\mathrm d \rho^{(s)}
\geq E(\lvert X\rvert)
=\sqrt{2/\pi}.
\end{align*}
This shows the claim, even if there is no dependence on $j$.
\end{rem}

\subsection*{Acknowledgment}
The authors thank Josef Dick 
and Greg Wasilkowski
for valuable discussions.

This paper was initiated at the Oberwolfach Workshop 1340 
``Uniform Distribution Theory and Applications'', and substantial 
progress was made during the Special Semester ``High-Dimensional 
Approximation'' at the Institute for Computational and Experimental 
Research in Mathematics (ICERM) at Brown University and at the 
Dagstuhl Seminar 15391 ``Algorithms and Complexity for 
Continuous Problems'' at Schloss Dagstuhl. The authors would like 
to thank the staff of the MFO Oberwolfach, the ICERM, and Leibniz-Zentrum 
f\"ur Informatik in Dagstuhl for providing a stimulating research 
environment and for their hospitality. 

\bibliographystyle{siam}
\bibliography{References}

\begin{thebibliography}{10}

\bibitem{Aro50}
{\sc N.~Aronszajn}, {\em Theory of reproducing kernels}, Trans. Amer. Math.
  Soc., 68 (1950), pp.~337--404.

\bibitem{Bal12}
{\sc J.~Baldeaux}, {\em Scrambled polynomial lattice rules for
  infinite-dimensional integration}, in Monte Carlo and Quasi-Monte Carlo
  Methods 2012, L.~Plaskota and H.~Wo\'zniakowski, eds., Springer, Heidelberg,
  2012, pp.~255--263.

\bibitem{BD11}
{\sc J.~Baldeaux and J.~Dick}, {\em A construction of polynomial lattice rules
  with small gain coefficients}, Num. Math., 119 (2011), pp.~271--297.

\bibitem{BDGP11}
{\sc J.~Baldeaux, J.~Dick, J.~Greslehner, and F.~Pillichshammer}, {\em
  Construction algorithms for higher order polynomial lattice rules}, J.
  Complexity, 27 (2011), pp.~281--299.

\bibitem{BDLNP12}
{\sc J.~Baldeaux, J.~Dick, G.~Leobacher, D.~Nuyens, and F.~Pillichshammer},
  {\em Efficient calculation of the worst-case error and (fast)
  component-by-component construction of higher order polynomial lattice
  rules}, Numer. Algorithms, 59 (2012), pp.~403--431.

\bibitem{BG12}
{\sc J.~Baldeaux and M.~Gnewuch}, {\em Optimal randomized multilevel algorithms
  for infinite-dimensional integration on function spaces with {ANOVA}-type
  decomposition}, SIAM J.~Numer. Anal., 52 (2014), pp.~1128--1155.

\bibitem{CDMR09}
{\sc J.~Creutzig, S.~Dereich, T.~M{\"u}ller-Gronbach, and K.~Ritter}, {\em
  Infinite-dimensional quadrature and approximation of distributions}, Found.
  Comput. Math., 9 (2009), pp.~391--429.

\bibitem{D11a}
{\sc J.~Dick}, {\em Higher order scrambled digital nets achieve the optimal
  rate of the root mean square error for smooth integrands}, Ann. Statist., 39
  (2011), pp.~1372--1398.

\bibitem{DG12}
{\sc J.~Dick and M.~Gnewuch}, {\em Infinite-dimensional integration in weighted
  {H}ilbert spaces: anchored decompositions, optimal deterministic algorithms,
  and higher order convergence}, Found. Comput. Math., 14 (2014),
  pp.~1027--1077.

\bibitem{DG13}
\leavevmode\vrule height 2pt depth -1.6pt width 23pt, {\em Optimal randomized
  changing dimension algorithms for infinite-dimensional integration on
  function spaces with {ANOVA}-type decomposition}, J. Approx. Theory, 184
  (2014), pp.~111--145.

\bibitem{DKPS05}
{\sc J.~Dick, F.~Y. Kuo, F.~Pillichshammer, and I.~H. Sloan}, {\em Construction
  algorithms for polynomial lattice rules for multivariate integration}, Math.
  Comp., 74 (2005), pp.~1895--1921.

\bibitem{DKS13}
{\sc J.~Dick, F.~Y. Kuo, and I.~H. Sloan}, {\em High dimensional integration --
  the quasi-{M}onte {C}arlo way}, Acta Numerica, 22 (2013), pp.~133--288.

\bibitem{DP07}
{\sc J.~Dick and F.~Pillichshammer}, {\em Strong tractability of multivariate
  integration of arbitrary high order using digitally shifted polynomial
  lattice rules}, J. Complexity, 23 (2007), pp.~436--453.

\bibitem{Gne10}
{\sc M.~Gnewuch}, {\em Infinite-dimensional integration on weighted {H}ilbert
  spaces}, Math. Comp., 81 (2012), pp.~2175--2205.

\bibitem{Gne12a}
\leavevmode\vrule height 2pt depth -1.6pt width 23pt, {\em Lower error bounds
  for randomized multilevel and changing dimension algorithms}, in Monte Carlo
  and Quasi-Monte Carlo Methods 2013, J.~Dick, F.~Y. Kuo, G.~W. Peters, and
  I.~H. Sloan, eds., Springer, Heidelberg, 2013, pp.~399--415.

\bibitem{GMR12}
{\sc M.~Gnewuch, S.~Mayer, and K.~Ritter}, {\em On weighted {H}ilbert spaces
  and integration of functions of infinitely many variables}, J.~Complexity, 30
  (2014), pp.~29--47.

\bibitem{God15}
{\sc T.~Goda}, {\em Good interlaced polynomial lattice rules for numerical
  integration in weighted {W}alsh spaces}, J. Comput. Appl. Math., 285 (2015),
  pp.~279--294.

\bibitem{GD12}
{\sc T.~Goda and J.~Dick}, {\em Construction of interlaced scrambled polynomial
  lattice rules of arbitrary high order}, Found. Comput. Math., 15 (2015),
  pp.~1245--1278.

\bibitem{HEF14}
{\sc M.~Hefter}, {\em Embeddings and Algorithms for Infinite-dimensional
  Integration}, PhD thesis, TU Kaiserslautern, 2014.

\bibitem{HR13}
{\sc M.~Hefter and K.~Ritter}, {\em On embeddings of weighted tensor product
  {H}ilbert spaces}, J. Complexity, 31 (2015), pp.~405--423.

\bibitem{Hefter20161}
{\sc M.~Hefter, K.~Ritter, and G.~W. Wasilkowski}, {\em On equivalence of
  weighted anchored and {ANOVA} spaces of functions with mixed smoothness of
  order one in ${L}_1$ or ${L}_\infty$}, J. Complexity, 32 (2016), pp.~1--19.

\bibitem{HMNR10}
{\sc F.~J. Hickernell, T.~M{\"u}ller-Gronbach, B.~Niu, and K.~Ritter}, {\em
  Multi-level {M}onte {C}arlo algorithms for infinite-dimensional integration
  on $\mathbb{R}^{\N}$}, J. Complexity, 26 (2010), pp.~229--254.

\bibitem{HW00}
{\sc F.~J. Hickernell and H.~Wo\'zniakowski}, {\em Integration and
  approximation in arbitrary dimensions}, Adv.\ Comput.\ Math., 12 (2000),
  pp.~25--58.

\bibitem{Hinrichs2015}
{\sc A.~Hinrichs and J.~Schneider}, {\em Equivalence of anchored and {ANOVA}
  spaces via interpolation}, J. Complexity, 33 (2016), pp.~190--198.

\bibitem{2015arXiv151105674K}
{\sc P.~{Kritzer}, F.~{Pillichshammer}, and G.~W. {Wasilkowski}}, {\em On
  equivalence of anchored and {ANOVA} spaces; lower bounds}, arXiv:1511.05674,
  (2015).

\bibitem{2015arXiv150602458K}
\leavevmode\vrule height 2pt depth -1.6pt width 23pt, {\em Very low truncation
  dimension for high dimensional integration under modest error demand},
  arXiv:1506.02458,  (2015).

\bibitem{KPW14}
{\sc P.~{Kritzer}, F.~{Pillichshammer}, and H.~{Wo\'zniakowski}}, {\em
  Multivariate integration of infinitely many times differentiable funcitons in
  weighted {K}orobov spaces}, Math.\ Comp., 83 (2014), pp.~1189--1206.

\bibitem{Kuo03}
{\sc F.~Y. Kuo}, {\em Component-by-component constructions achieve the optimal
  rate of convergence for multivariate integration in weighted {K}orobov and
  {S}obolev spaces}, J. Complexity, 19 (2003), pp.~301--320.

\bibitem{KSWW10a}
{\sc F.~Y. Kuo, I.~H. Sloan, G.~W. Wasilkowski, and H.~Wo\'zniakowski}, {\em
  Liberating the dimension}, J. Complexity, 26 (2010), pp.~422--454.

\bibitem{KSWW10b}
\leavevmode\vrule height 2pt depth -1.6pt width 23pt, {\em On decompositions of
  multivariate functions}, Math. Comp., 79 (2010), pp.~953--966.

\bibitem{N92a}
{\sc H.~Niederreiter}, {\em Low-discrepancy point sets obtained by digital
  constructions over finite fields}, Czech. Math. J., 42 (1992), pp.~143--166.

\bibitem{NHMR11}
{\sc B.~Niu, F.~J. Hickernell, T.~M{\"u}ller-Gronbach, and K.~Ritter}, {\em
  Deterministic multi-level algorithms for infinite-dimensional integration on
  $\mathbb{R}^{\N}$}, J. Complexity, 27 (2011), pp.~331--351.

\bibitem{Nov88}
{\sc E.~Novak}, {\em Deterministic and Stochastic Error Bounds in Numerical
  Analysis}, vol.~1349 of Lect. Notes in Math., Springer-Verlag, Berlin, 1988.

\bibitem{NW08}
{\sc E.~Novak and H.~Wo\'zniakowski}, {\em Tractability of {M}ultivariate
  {P}roblems. Vol. 1: {L}inear {I}nformation}, EMS Tracts in Mathematics,
  European Mathematical Society (EMS), Z\"urich, 2008.

\bibitem{NW10}
\leavevmode\vrule height 2pt depth -1.6pt width 23pt, {\em Tractability of
  {M}ultivariate {P}roblems. Vol. 2: {S}tandard {I}nformation for
  {F}unctionals}, EMS Tracts in Mathematics, European Mathematical Society
  (EMS), Z\"urich, 2010.

\bibitem{NW12}
\leavevmode\vrule height 2pt depth -1.6pt width 23pt, {\em Tractability of
  {M}ultivariate {P}roblems. Vol. 3: {S}tandard {I}nformation for {O}perators},
  EMS Tracts in Mathematics, European Mathematical Society (EMS), Z\"urich,
  2012.

\bibitem{NC06a}
{\sc D.~Nuyens and R.~Cools}, {\em Fast algorithms for component-by-component
  construction of rank-$1$ lattice rules in shift-invariant reproducing kernel
  {H}ilbert spaces}, Math. Comp., 75 (2006), pp.~903--920.

\bibitem{NC06b}
\leavevmode\vrule height 2pt depth -1.6pt width 23pt, {\em Fast
  component-by-component construction, a reprise for different kernels}, in
  Monte Carlo and Quasi-Monte Carlo Methods 2004, H.~Niederreiter and D.~Talay,
  eds., Berlin, 2006, Springer, pp.~373--387.

\bibitem{O95}
{\sc A.~B. Owen}, {\em Randomly permuted $(t,m,s)$-nets and $(t,s)$-sequences},
  in Monte Carlo and Quasi-Monte Carlo Methods in Scientific Computing,
  H.~Niederreiter and P.~J.-S. Shiue, eds., New York, 1995, Springer,
  pp.~299--317.

\bibitem{PW11}
{\sc L.~Plaskota and G.~W. Wasilkowski}, {\em Tractability of
  infinite-dimensional integration in the worst case and randomized settings},
  J. Complexity, 27 (2011), pp.~505--518.

\bibitem{SWW04}
{\sc I.~H. Sloan, X.~Wang, and H.~Wo\'zniakowski}, {\em Finite-order weights
  imply tractability of multivariate integration}, J. Complexity, 20 (2004),
  pp.~46--74.

\bibitem{SW01}
{\sc I.~H. Sloan and H.~Wo\'zniakowski}, {\em Tractability of multivariate
  integration for weighted {K}orobov classes}, J.\ Complexity, 17 (2001),
  pp.~697--721.

\bibitem{Wan03}
{\sc X.~Wang}, {\em Strong tractability of multivariate integration using
  quasi-{M}onte {C}arlo algorithms}, Math. Comp., 72 (2003), pp.~823--838.

\bibitem{YH05}
{\sc R.~X. Yue and F.~J. Hickernell}, {\em Strong tractability of integration
  using scrambled {N}iederreiter points}, Math. Comp., 74 (2005),
  pp.~1871--1893.

\bibitem{YH06}
\leavevmode\vrule height 2pt depth -1.6pt width 23pt, {\em Strong tractability
  of quasi-{M}onte {C}arlo quadrature using nets for certain {B}anach spaces},
  SIAM J.\ Numer.\ Anal., 44 (2006), pp.~2559--2583.

\end{thebibliography}

\end{document}